\theoremstyle{plain}
\newtheorem{thm}{Theorem}[section]
\newtheorem{lemma}[thm]{Lemma}
\newtheorem{prop}[thm]{Proposition}
\newtheorem{cor}[thm]{Corollary}
\theoremstyle{definition}
\newtheorem{defn}[thm]{Definition}
\newtheorem{eg}[thm]{Example}
\theoremstyle{remark}
\newtheorem{remark}[thm]{Remark}
\newcommand{\nc}{\newcommand}
\def\makeop#1{\expandafter\def\csname#1\endcsname
  {\mathop{\rm #1}\nolimits}\ignorespaces}
\DeclareMathOperator{\Mass}{Mass}
\DeclareMathOperator{\Irr}{Irr}
\DeclareMathOperator{\GU}{GU}
\def\makebb#1{\expandafter\def
  \csname bb#1\endcsname{{\mathbb{#1}}}\ignorespaces}
\def\makebf#1{\expandafter\def\csname bf#1\endcsname{{\bf
      #1}}\ignorespaces} 
\def\makegr#1{\expandafter\def
  \csname gr#1\endcsname{{\mathfrak{#1}}}\ignorespaces}
\def\makescr#1{\expandafter\def
  \csname scr#1\endcsname{{\EuScript{#1}}}\ignorespaces}
\def\makecal#1{\expandafter\def\csname cal#1\endcsname{{\mathcal
      #1}}\ignorespaces} 
\def\doLetters#1{#1A #1B #1C #1D #1E #1F #1G #1H #1I #1J #1K #1L #1M
                 #1N #1O #1P #1Q #1R #1S #1T #1U #1V #1W #1X #1Y #1Z}
\def\doletters#1{#1a #1b #1c #1d #1e #1f #1g #1h #1i #1j #1k #1l #1m
                 #1n #1o #1p #1q #1r #1s #1t #1u #1v #1w #1x #1y #1z}
\def\Fpbar{\overline{\bbF}_p}
\def\Fp{{\bbF}_p}
\def\Qp{{\bbQ}_p}
\def\Zp{{\bbZ}_p}
\def\Qbar{\overline{\bbQ}}
\def\Sh{{\rm Sh}}
\newcommand{\Z}{\mathbb Z}
\newcommand{\Q}{\mathbb Q}
\newcommand{\R}{\mathbb R}
\newcommand{\C}{\mathbb C}
\newcommand{\D}{\mathbb D}    % pro algebraic torus
\newcommand{\A}{\mathbb A}    % for adele
\newcommand{\F}{\mathbb F}
\newcommand{\<}{\langle}   %\< is not defined yet.
\renewcommand{\>}{\rangle} %\> is already defined.
\nc{\embed}{\hookrightarrow}
\newcommand{\M}{\mathcal M}
\newcommand{\K}{\mathsf K}
\newcommand{\ch}{characteristic }
\newcommand{\ac}{algebraically closed }
\newcommand{\dieu}{Dieudonn\'{e} }
\nc{\ol}{\overline}
\nc{\wt}{\widetilde}
\nc{\opp}{\mathrm{opp}}
\def\ul{\underline}
\def\wh{\widehat}
\def\char{{\rm char\,}}
\def\sfF{\mathsf{F}}
\def\sfV{\mathsf{V}}
\def\sp{{\rm sp}}
\begin{document}
\numberwithin{equation}{section}
%\rhomberwithin{section}{chapter}

%\usepackage[notref,notcite]{showkeys}

\title[The supersingular 
locus]
{On the supersingular 
locus of  Shimura varieties  for quaternionic unitary groups}
 \author{Yasuhiro Terakado}
\address{
(Terakado) School of System Design and Technology, Tokyo Denki University 
\\
5 Senju Asahi-cho  
\\
Adachi-ku, Tokyo, Japan, 120-8551} 
\email{yterakado@mail.dendai.ac.jp} 

 \author{Jiangwei Xue}
 \address{(Xue)  Collaborative Innovation Center of Mathematics, School of
  Mathematics and Statistics, Wuhan University \\
   Luojiashan  
   \\ 
  Wuhan, Hubei, P.R. China, 430072} 
  \email{xue\_j@whu.edu.cn}
  \address{(Xue) Hubei Key Laboratory of Computational Science  (Wuhan
  University) 
  \\ Wuhan, Hubei, P.R. China, 430072}

\author{Chia-Fu Yu}
\address{
(Yu) Institute of Mathematics, Academia Sinica \\
Astronomy Mathematics Building \\
No.~1, Sec.~4, Roosevelt Rd. \\ 
Taipei, Taiwan, 106319} 
\email{chiafu@math.sinica.edu.tw}

%\address{
%(Yu) National Center for Theoretical Sciences\\
%Cosmology Building\\
%No.~1, Sec.~4,  Roosevelt Rd.\\
%Taipei, Taiwan, 106319}

%\date{June 23, 2000}

\date{\today}
\subjclass[2010]{} 
\keywords{}

\subjclass{11G18, 14G35}
\keywords{Shimura varieties, Supersingular locus, Mass formula}  

\begin{abstract}
We study a Shimura variety attached to a unitary similitude group of a skew-Hermitian form   over a totally indefinite quaternion algebra over a totally real number field. 
We give a necessary and sufficient condition for the existence of skew-Hermitian self-dual lattices. 
Under this condition we  show that the  superspecial locus in the fiber
at $p$ of the associated Shimura variety is non-empty. 
 We also give an explicit formula for the number of irreducible components of the supersingular  locus when $p$ is odd and unramified in the  quaternion algebra. 
 %associated to the unitary similitude group  of a  Hermitian form with a totally indefinite quaternionic multiplication. 
\end{abstract} 

\maketitle 
\tableofcontents
\section{Introduction}\label{sec:I}

Throughout this paper $p$ denotes a rational prime number and  $N\ge 3$ denotes  a positive integer with $(p,N)=1$. 
% closure $\ol{\F}_p$ of $\F_p$. 
Let $\bfA_{g,N}$ be the moduli scheme over $\Z_{(p)}$ of principally polarized abelian varieties of dimension $g\ge 1$ with a level-$N$ structure, and let $\calA_{g,N} \coloneqq \bfA_{g,N}\otimes \ol{\F}_p$ be the geometric special fiber. 
There are very rich and complicated geometric structures on the space $\calA_{g,N}$, due to the properties of the $p$-divisible groups associated to points to be classified.   
As a result, people introduced and have been investigating geometric problems of the induced strata, notably, the Newton strata, Ekedahl-Oort strata, and central leaves.

We
recall that an abelian variety $A$ over an algebraically  closed field of characteristic $p$ is said to be superspecial (resp.~supersingular) if it is isomorphic 
(resp.~isogenous) to a product of supersingular elliptic curves. 
Let  $\calA_{g, N}^{\rm sp} \subset \calA_{g,N}^{\rm ss}\subset \calA_{g,N}$ be the superspecial (resp.~supersingular)  locus of $\calA_{g,N}$, that is,  the subspace parameterizing the superspecial (resp.~supersingular) abelian varieties in $\calA_{g,N}$.  
%that is, the corresponding abelian varieties are isogenous to a product of supersingular elliptic curves. 
Then $\calA_{g, N}^{\rm sp}$ is the unique $0$-dimensional Ekedahl-Oort stratum, and  $\calA_{g, N}^{\rm ss}$ is the unique closed Newton stratum  of $\calA_{g, N}$.  
An explicit formula for the cardinality of $\calA_{g, N}^{\rm sp}$ was given by Ekedahl \cite{Ekedahl}, using Hashimoto-Ibukiyama's mass formula \cite[Proposition 9]{HI}. 
%For any scheme $X$ of finite type over an algebraically closed field $k$, denote by $\Irr(X)$ the set of irreducible components.
%Concerning the supersingular locus,
%Extending earlier works of Ibukiyama, Katsura, and Oort \cite{IKO, KO} in lower dimensions, 
In \cite{LO}, Li and Oort investigated
the geometry of the supersingular locus, and in particular they derived a formula relating the number of irreducible components to the class number of a genus of quaternion Hermitian lattices. 
An explicit formula for the class number was given in \cite{yu:mass_siegel}. 
\begin{comment}

\begin{thm}\label{thm:I.1}
   \begin{enumerate}
       \item The moduli space $\calA_{g,N}$ is smooth of equi-dimension $g(g+1)/2$ and it has $\varphi(N)\coloneqq|(\Z/N\Z)^\times|$ irreducible components. 
       \item Every Newton stratum is non-empty and the closure of each Newton stratum is a union of Newton strata.  
       \item Every Newton stratum is equi-dimensional with dimension governed by the purity property. 
       \item In each connected component $\calA_{g,N}^0$ of $\calA_{g,N}$, every non-supersingular Newton stratum is  irreducible. The superisnuglar locus of $\calA_{g,N}^0$ has $(-1)^{g(g+1)/2}/2^g \cdot \prod_{i=1}^g \zeta(1-2i) \cdot L_p$ irreducible components, where 
\begin{equation}
L_p=\begin{cases}
    \prod_{i=1}^g (p^i+(-1)^i) & \text{if $g$ is odd;} \\
    \prod_{i=1}^c (p^{4i-2}-1) & \text{if $g=2c$ is even.}
\end{cases}
\end{equation}       
   \end{enumerate} 
\end{thm}
\end{comment}

\begin{thm}[\cite{Ekedahl, HI, LO, yu:mass_siegel}]\label{thm:I.1}
 We write $\zeta(s)$ for  the Riemann zeta function and $\GSp_{2g}$ for  the symplectic similitude group  of degree $2g$. 
 Further we put 
\begin{equation*}
    C(g, N)\coloneqq \lvert \GSp_{2g}(\Z/N\Z)\rvert 
    \cdot  \frac{(-1)^{g(g+1)}/2}{2^g} \cdot \prod_{i=1}^g \zeta(1-2i). 
\end{equation*}

{\rm (1)}
 The cardinality of the superspecial locus $\calA_{g, N}^{\rm sp}$ is equal to  
      $C(g,N) \cdot \prod_{i=1}^g (p^i+(-1)^i)$.

       {\rm{(2)}}  
        The supersingular locus $\calA_{g,N}^{\rm ss}$
    is equidimensional of dimension $\lfloor g^2/4 \rfloor$ and the number of its irreducible components 
    is 
    equal to 
  $C(g, N) \cdot \lambda_p$  where $\lambda_p$ is given by 
\begin{equation*}
    \lambda_p=\begin{cases}
    \prod_{i=1}^g (p^i+(-1)^i) & \text{if $g$ is odd;} \\
    \prod_{i=1}^c (p^{4i-2}-1) & \text{if $g=2c$ is even.}
\end{cases}
\end{equation*}   
\end{thm}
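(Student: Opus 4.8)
The plan is to assemble the four quoted inputs and reconcile their normalizations; I treat the two parts in turn.

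\emph{Part (1).} The starting point is the dictionary between the superspecial locus and quaternion Hermitian lattices (due to Deligne; see \cite[\S1]{LO} and the references therein): sending a superspecial principally polarized abelian variety $(A,\lambda)$ over $\Fpbar$ of dimension $g$ to its covariant Dieudonn\'{e} module equipped with the pairing induced by $\lambda$ identifies the set of isomorphism classes of such $(A,\lambda)$ with a single genus $\mathcal{L}$ of unimodular quaternion Hermitian lattices of rank $g$ over a maximal order $\mathcal{O}_B$ in the definite quaternion $\Q$-algebra $B_{p,\infty}$ ramified exactly at $p$ and $\infty$ --- in particular $\calA_{g}^{\rm sp}$ is the unique zero-dimensional Ekedahl--Oort stratum (hence finite) and the automorphism group of each point is a finite group. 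Adding a level-$N$ structure, $\calA_{g,N}^{\rm sp}(\Fpbar)$ is a disjoint union, over the classes in $\mathcal{L}$, of the quotient of a torsor under $\GSp_{2g}(\Z/N\Z)$ by the (finite) automorphism group; for $N\ge 3$ this action is free by Serre's lemma, so
\begin{equation*}
 \lvert \calA_{g,N}^{\rm sp}\rvert \;=\; \lvert \GSp_{2g}(\Z/N\Z)\rvert \cdot \Mass(\mathcal{L}),\qquad \Mass(\mathcal{L}) \;=\; \sum_{[L]\in \mathcal{L}} \frac{1}{\lvert \Aut(L)\rvert}.
\end{equation*}
This is Ekedahl's observation \cite{Ekedahl}. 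One then substitutes the Hashimoto--Ibukiyama mass formula \cite[Proposition 9]{HI}, which evaluates $\Mass(\mathcal{L})$ as $\bigl((-1)^{g(g+1)/2}/2^{g}\bigr)\prod_{i=1}^{g}\zeta(1-2i)\cdot\prod_{i=1}^{g}(p^{i}+(-1)^{i})$, and regroups the constant into $C(g,N)$ to obtain (1). (For $g=1$ this recovers the Deuring--Eichler mass $(p-1)/24$ times $\lvert\GSp_2(\Z/N\Z)\rvert$.)

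\emph{Part (2).} Here the input is Li--Oort's study \cite{LO} of the supersingular locus. For a suitable \emph{flag type} $\phi$ they construct a projective scheme $\mathcal{P}_\phi$ --- obtained by iterated $\bbP^{1}$-bundle constructions over a zero-dimensional base which is a class set of a genus of quaternion Hermitian lattices --- together with a finite surjective morphism $\mathcal{P}_\phi \to \calA_{g,N}^{\rm ss}$; since the fibers are irreducible of dimension $\lfloor g^{2}/4\rfloor$, this gives at once both the equidimensionality and the dimension of $\calA_{g,N}^{\rm ss}$ and a bijection between its irreducible components and the base class set. For $g$ odd the base is the unimodular genus $\mathcal{L}$, so the component count is again $\lvert\GSp_{2g}(\Z/N\Z)\rvert\cdot\Mass(\mathcal{L})$ and $\lambda_{p}=\prod_{i=1}^{g}(p^{i}+(-1)^{i})$. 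For $g=2c$ even the base is instead a genus $\mathcal{L}'$ of rank-$g$ quaternion Hermitian lattices carrying a non-principal polarization (equivalently, a prescribed non-maximal parahoric level at $p$ for the relevant inner form of $\GSp_{2g}$, with the level away from $p$ unchanged); invoking Yu's explicit evaluation \cite{yu:mass_siegel}, $\Mass(\mathcal{L}')$ has the same archimedean factor $\prod_{i=1}^{g}\zeta(1-2i)$ but local factor at $p$ equal to $\prod_{i=1}^{c}(p^{4i-2}-1)$, whence the count $C(g,N)\cdot\lambda_p$ in the even case.

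The main obstacle is that, although each individual step is quoted from the literature, the normalizations must be matched with real care. Two points deserve attention: (i) one must track the similitude factor throughout so that the combinatorial constant comes out as $\lvert\GSp_{2g}(\Z/N\Z)\rvert$ and not $\lvert\Sp_{2g}(\Z/N\Z)\rvert$, the passage from ``mass'' to ``number of $\Fpbar$-points'' being exactly where the freeness of the automorphism action for $N\ge 3$ is used; and (ii), more delicately, one must correctly identify which genus of quaternion Hermitian lattices indexes the irreducible components of $\calA_{g,N}^{\rm ss}$. It is in (ii) that the parity of $g$ enters: the length and the integral-versus-half-integral polarization invariant at $p$ of the Li--Oort flag-type chains determine whether the relevant parahoric of the inner form of $\GSp_{2g}$ at $p$ is hyperspecial ($g$ odd) or not ($g$ even), and lining this up with the local factor appearing in \cite{yu:mass_siegel} is the bookkeeping one has to carry out.
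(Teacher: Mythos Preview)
The paper does not supply its own proof of this theorem: it is stated as a known result assembled from \cite{Ekedahl, HI, LO, yu:mass_siegel}, with only the one-sentence gloss in the introduction that Ekedahl's count uses the Hashimoto--Ibukiyama mass formula and that Li--Oort relate the component count to a class number of quaternion Hermitian lattices whose explicit value is in \cite{yu:mass_siegel}. Your outline is exactly this assembly and is correct in substance; it also matches how the paper later reproves the generalization (Theorem~\ref{main.3}), where the passage ``mass $=$ cardinality'' is justified via Serre's lemma for $N\ge 3$ and the parity dichotomy arises from identifying which maximal parahoric of the inner form at $p$ stabilizes an irreducible component.

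One wording issue in Part~(2): you say the Li--Oort map $\mathcal{P}_\phi\to\calA_{g,N}^{\rm ss}$ is finite surjective and then that ``the fibers are irreducible of dimension $\lfloor g^2/4\rfloor$''. A finite morphism has zero-dimensional fibers; the dimension $\lfloor g^2/4\rfloor$ refers to the fibers of the \emph{other} map, from $\mathcal{P}_\phi$ to the zero-dimensional class set (the iterated $\bbP^1$-bundles). What Li--Oort actually prove is that the finite map to $\calA_{g,N}^{\rm ss}$ is generically one-to-one on each irreducible component, which is what yields the bijection between $\Irr(\calA_{g,N}^{\rm ss})$ and the class set. With that clarification your sketch is a faithful summary of the cited argument.
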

% S_{g,N} has $\varphi(N)$ connected components.
\begin{comment}
Notions of Newton and EO strata for more general Shimura varieties were also introduced and studied by many people in past decades.
For the studies of EO strata, we refer to (Goren-Oort, Wedhorn, Moonen, Zhang)
For non-emptiness and dimensions of Newton stratum, we refer to (Yu, 
For the studies of the supersingular locus, we refer 
We refer to (cite many people, non-emptiness of Newton, dimension \cite{VW, Hamacher, Lee, Zhang}) for some of major developments.  
\end{comment}

The aim of this paper is to study 
 the supersingular locus of a PEL Shimura variety of type C, and in particular to give an explicit formula for the number of the   irreducible components, generalizing Theorem~\ref{thm:I.1}. 
 
Let $F$ be a totally real field of degree $d$ with ring of integers
$O_F$, and $O_B$ a maximal $O_F$-order in a totally indefinite quaternion
algebra $B$ over $F$ which is stable under a positive involution $*$ of $B$. 
Let $b \mapsto \bar b$ denote the canonical involution of $B$. 
There is an element $\gamma\in B^\times$ such that $\gamma+ \bar 
\gamma=0$ and
$b^*=\gamma \bar b \gamma ^{-1}$ for all $b\in B$. 
A {\it polarized $O_B$-abelian scheme} (over a base scheme) is a triple $(A,\lambda,\iota)$, where $(A,\lambda)$ is a polarized abelian scheme  and $\iota: O_B\to \End(A)$ is a ring monomorphism such that $\lambda\circ \iota(b^*)=\iota(b)^t \circ \lambda$ for all $b\in O_B$ (Definition \ref{def:ab}).

Let $\mathscr D=(B, *, O_B,V,\Lambda,
\psi, h_0)$ be an integral PEL datum of type C of rank $m$ (Definition \ref{def:PELC}). In particular, $(V, \psi)$ be a $\Q$-valued skew-Hermitian $(B, *)$-module  of rank $m$, unique up to isomorphism, and  $\Lambda$ is an $O_B$-lattice in $V$. 
Let $\bfG$ be the group scheme over $\Z$ of $B$-linear
$\psi$-similitudes on $(\Lambda,\psi)$, and  $\K=\K(N)$ be the kernel of the reduction mod $N$ map on $\bfG(\wh{\Z})$. 
Let $\bfM_{\K}$ be the associated moduli scheme over $\Z_{(p)}$ of $2dm$-dimensional polarized $O_B$-abelian varieties with level-$N$
structure satisfying the determinant condition (Section \ref{ss:Sh}).  
In the special case where 
$B=\Mat_2(F)$, $O_B=\Mat_2(O_F)$, and $*$ is the transpose $t$, 
 Morita's equivalence reduction implies that the moduli scheme $\bfM_{\sf K}$ is the same as the Hilbert-Siegel moduli scheme of degree $m$ over $F$: the moduli scheme of $dm$-dimensional polarized $O_F$-abelian varieties. This case  has been studied in \cite{yu:mass_hb}. 
When $m=1$ and $d=1$ ($F=\Q$), $\bfM_{\sf K}$ is an integral model of the Shimura curve associated to the quaternion $\Q$-algebra $B$, which is also called a fake modular curve. Similarly, we call $\bfM_{\sf K}$ an integral model of a ``fake" Hilbert-Siegel modular variety (constrast to a quaterntionic Shimura variety which usually specifies to the case where $m=1$).  

In the Hilbert-Siegel case ($B=\Mat_2(F)$, $O_B=\Mat_2(O_F)$, and $*=t$), it is known that there always exists a  {\it principally} polarized $O_B$-abelian variety $(A,\lambda,\iota)$ over $\C$ (for example, one may take a product of $m$ points of the Hilbert modular variety associated to the totally real field $F$).  
However, for a general triple $(B,*,O_B)$, the existence of such an abelian variety requires the condition that $(B, *, O_B)$ extends to a  {\it principal} integral PEL datum $\mathscr D$ (see Section  \ref{ss:PEL}), in other words, there exists a \emph{self-dual} $O_B$-lattice $\Lambda$ in $(V,\psi)$.  
%However, such a lattice does not always exist.  
In the following theorem, we give a necessary and sufficient condition for the existence of a self-dual lattice $\Lambda$, and consequently we obtain a condition for the existence of a principally polarized $O_B$-abelian variety.

\begin{thm}\label{main.1}
Let $(B, *, O_B)$ be as above and $m$ be a positive integer. 
Then the following statements are equivalent:
\begin{itemize}
    \item [(a)] There exists a complex principally polarized $O_B$-abelian variety of dimension $2dm$.
    \item [(b)] There exists a self-dual $O_B$-lattice $\Lambda$ in a (unique) $\Q$-valued skew-Hermitian $(B,*)$-module $(V, \psi)$ of rank $m$.  
    \item [(c)] Either $m$ is even, or for any finite place $v$ of $F$ ramified in $B$ one has $\ord_{\Pi_v}(\gamma)$ is odd. 
    Here, $\Pi_v$ denotes  a uniformizer of the completion $B_v=B\otimes_{F} F_v$   at $v$, and $\ord_{\Pi_v}
    (\, \cdot \,)$ denotes  the $\Pi_v$-adic valuation. 
    \item[(d)] There exists a principally polarized $O_B$-abelian variety   of dimension $2dm$ over an algebraically closed  field $k$  of characteristic $p$ which  satisfies the determinant condition (see \eqref{eq:detA} for the definition). 
\end{itemize}
Under these conditions,  a self-dual $O_B$-lattice $\Lambda$ as in {\rm (b)} is unique up to isomorphism. 
%( there exists a complex principally polarized $O_B$-abelian variety of dimension $2dm$ if and only $m$ is even, or for any finite place $v$ of $F$ ramified in $B$ one has $\ord_{\Pi_v}(\gamma)$ is odd. 
%Here, $\Pi_v$ denotes  a uniformizer of the completion $B_v=B\otimes_{F} F_v$   at $v$, 
%and $\ord_{\Pi_v}
%(\, \cdot \,)$ denotes  the $\Pi_v$-adic  valuation. 
\end{thm}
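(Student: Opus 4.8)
The plan is to establish the two equivalences $(a)\Leftrightarrow(b)$ and $(b)\Leftrightarrow(c)$, together with the implications $(a)\Rightarrow(d)$ and $(d)\Rightarrow(b)$, and then to deduce the uniqueness of $\Lambda$. The step $(a)\Leftrightarrow(b)$ is a formal consequence of complex uniformisation, $(a)\Rightarrow(d)$ of reduction modulo $p$, and $(d)\Rightarrow(b)$ of the Dieudonn\'e-theoretic reading of the determinant condition; the arithmetic content lies in $(b)\Leftrightarrow(c)$, which reduces to a local computation at the finite places of $F$ ramified in $B$.

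For $(a)\Leftrightarrow(b)$: a complex polarised $O_B$-abelian variety of dimension $2dm$ amounts to an $O_B$-lattice $\Lambda$ in $V_\R$ equipped with an $h_0$-type complex structure compatible with $\psi$, and such an $h_0$ exists exactly because $B$ is totally indefinite, so that the archimedean completion of $(V,\psi)$ is (via Morita equivalence) the unique alternating $\R$-space of rank $2m$, with no signature obstruction; the $O_B\otimes\C$-module $\Lie A$ is then automatically of the prescribed type, since complex conjugation fixes every real place of $F$ and so forces equal multiplicities. The polarisation is principal precisely when $\Lambda$ is self-dual for $\psi$, i.e.\ when the associated $\Z$-valued Riemann form on $\Lambda$ is unimodular. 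Hence $(a)$ holds if and only if a self-dual $O_B$-lattice exists, which is $(b)$.

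For $(b)\Leftrightarrow(c)$: a $\Z$-lattice is self-dual if and only if each of its completions is, and $\Lambda_v$ is automatically self-dual for all but finitely many $v$; since $(V,\psi)$ is, up to isomorphism, the \emph{unique} skew-Hermitian $(B,*)$-module of rank $m$, there is no global obstruction, so $(b)$ is equivalent to the existence, for every finite place $v$ of $F$, of a self-dual $O_{B_v}$-lattice in $(V_v,\psi_v)$. Unwinding $\Tr_{B/\Q}=\Tr_{F/\Q}\circ\Trd_{B/F}$, self-duality for the Riemann form translates into self-duality of the $O_{B_v}$-valued skew-Hermitian form $\psi_v$ up to the different $\mathfrak P_v$ of $O_{B_v}$ over $O_{F_v}$ (the different of $F_v/\Q_p$ enters only through an even power of $\mathfrak P_v$, so it does not affect the parity). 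If $v$ is split in $B$, then $\mathfrak P_v=O_{B_v}$, Morita equivalence turns $(V_v,\psi_v)$ into a $2m$-dimensional symplectic $F_v$-space, and a unimodular symplectic $O_{F_v}$-lattice always exists; so only the finite places $v$ ramified in $B$ matter. At such a $v$, $B_v$ is the quaternion division algebra over $F_v$, every $O_{B_v}$-lattice is free, the $*$-skew elements of $B_v$ form the line $F_v\gamma$, and since $\Nrd\colon B_v^\times\to F_v^\times$ is surjective, diagonalising $\psi_v$ gives $(V_v,\psi_v)\cong\langle\gamma\rangle^{\,m}$; for even $m$ this is an orthogonal sum of hyperbolic planes, each of which carries a self-dual $O_{B_v}$-lattice, so $(b)$ holds at $v$, while for odd $m$ it is a sum of hyperbolic planes and one copy of $\langle\gamma\rangle$. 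In the rank-one module $\langle\gamma\rangle$, every $O_{B_v}$-lattice has the form $\mathfrak P_v^{\,a}z$ for a fixed generator $z$ with $\psi_v(z,z)=\gamma$ and some $a\in\Z$, and a direct computation gives its dual as $\mathfrak P_v^{\,-1-k-a}z$ with $k=\ord_{\Pi_v}(\gamma)$; hence a self-dual lattice exists iff $2a=-1-k$ is solvable, i.e.\ iff $k$ is odd. (That there is no ``mixed'' self-dual lattice when $k$ is even follows from a discriminant argument: the $2$-adic valuation of the reduced-norm discriminant of $(V_v,\psi_v)$, normalised by $\mathfrak P_v$, is a genus invariant congruent to $mk$, whereas any self-dual lattice forces it to be congruent to $m$, modulo $2$.) Collecting the local conditions over all finite $v$ gives precisely $(c)$; the same analysis shows that a self-dual $O_{B_v}$-lattice, when it exists, is unique up to the local similitude group.

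Finally, $(a)\Rightarrow(d)$ follows by reducing a complex principally polarised $O_B$-abelian variety modulo a place above $p$, reduction preserving both principality of $\lambda$ and the determinant condition \eqref{eq:detA}; and $(d)\Rightarrow(b)$ follows because, given $(A,\lambda,\iota)$ over $k$, the $\ell$-adic Tate module with its Weil pairing is a self-dual $O_{B_\ell}$-lattice in $(V_\ell,\psi_\ell)$ for each $\ell\neq p$, while at $p$ the covariant Dieudonn\'e module of $A[p^\infty]$ together with the determinant condition forces the local invariant of $(V_p,\psi_p)$ to admit a self-dual $O_B\otimes\Z_p$-lattice; hence a self-dual $O_B$-lattice exists. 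For the uniqueness of $\Lambda$: the derived group $\bfG^{\der}=\SU(V,\psi)$ is a simply connected $\Q$-form of $\Sp_{2m}$ which, $B$ being totally indefinite, is isotropic at every archimedean place, hence satisfies strong approximation; combined with surjectivity of the similitude character $\bfG\to\Gm$ and the equality $\A_f^\times=\Q^\times_{>0}\cdot\wh\Z^\times$, this gives $\lvert\bfG(\Q)\backslash\bfG(\A_f)/\Stab(\Lambda)\rvert=1$, so the genus of self-dual $O_B$-lattices is a single isomorphism class. The main obstacle throughout is the local computation at the ramified places — in particular pinning down the correct self-duality condition, with the pure quaternion $\gamma$ and the different $\mathfrak P_v$ of $O_{B_v}/O_{F_v}$ both built in — since this is exactly what produces the parity statement ``$\ord_{\Pi_v}(\gamma)$ odd'', and it must be carried out with care in residue characteristic $2$.
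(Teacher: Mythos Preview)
Your overall architecture matches the paper's, but two steps have genuine gaps. First, $(a)\Rightarrow(d)$ as written fails: an arbitrary complex principally polarised $O_B$-abelian variety need not be defined over a number field, and even when it is, need not have good reduction at $p$; so ``reducing modulo a place above $p$'' is simply unavailable. The paper proves $(b)\Rightarrow(d)$ instead by choosing a special pair $(T,h_T)\hookrightarrow(\bfG,X)$ in the Shimura datum: the corresponding point of the Shimura variety is a CM abelian variety, hence defined over $\Qbar$ with potentially good reduction everywhere, and its reduction at $p$ furnishes the required object over $\Fpbar$. Since the moduli space is neither proper nor, at bad $p$, known to be flat a priori, there is no shortcut around this CM argument.

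Second, the step $(d)\Rightarrow(b)$ at $p$ requires substantially more than you indicate. The Dieudonn\'e module $M_v$ is a $W(k)$-module, not a $\Zp$-lattice, and after base change $O_{B_v}\otimes_{\Zp}W(k)$ is no longer an order in a division algebra (it is the order of \eqref{eq:OBvi}), so the parity obstruction that governs self-dual $O_{B_v}$-lattices does not transfer directly. The paper's argument, for $v\mid p$ ramified in $B$ with $\ord_{\Pi_v}(\gamma)$ normalised to $0$, is: use the determinant condition to ensure $M_v$ is free over $O_{B_v}\otimes W(k)$ (citing \cite[Prop.~5.6]{yu:D}); on the component $M_v^0$ construct a perfect $\breve B_v^0$-valued skew-Hermitian form $\psi_{B_v}(x,y)=\delta_v\<x,\gamma\Pi_v y\>_{B_v}$ for the twisted involution $b'=\Pi_v^{-1}\bar b\,\Pi_v$; then reduce modulo the Jacobson radical of $\breve O_{B_v}^0$, where $'$ becomes trivial on the quotient $k\times k$, so that one idempotent component of the reduced form is a nondegenerate \emph{alternating} pairing on an $m$-dimensional $k$-space --- forcing $m$ even. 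This is precisely where the determinant condition enters; without it $M_v$ need not be free and the construction collapses.
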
   
We remark that  the determinant  condition in  (d) can not be omitted. 
In Appendix we discuss the non-emptiness of the  moduli space  without the determinant condition in the case where the conditions in Theorem \ref{main.1} do not hold.

Hereafter we assume that the conditions in Theorem 
\ref{main.1} hold, and  that an integral PEL  datum $\mathscr D$ is  principal. 
%Then the associated  complex moduli space $\bfM_{\sf K}(\C)$ is non-empty. 
By the uniqueness of a self-dual lattice in a skew-Hermitian module, 
 the 
generic fiber $\bfM_{\sf K}\otimes \Q$ consists of a single Shimura variety,
rather than a union of some of them.   
%-- the superspecial locus of $\calM$, 
% for maxiaml lattices in a quaternion Hermitian form \cite{Shimura2}.  

Let $k$ be an algebraically  closed field of characteristic $p$, and let  $\calM_{\sf K} \coloneqq \bfM_{\sf K}\otimes k$ be the geometric special fiber. 
If we assume that $p$ is
unramified in $B$, then 
$\calM_{\K}$ 
 has the same number of connected components as the geometric generic fiber $\bfM_{\sf K} \otimes \ol{\Q}$ has (Lan 
\cite[Corollary 6.4.1.2]{Lan}), which is equal to  
$\varphi(N)\coloneqq|(\Z/N\Z)^\times|$. It is known  
that the ordinary locus %$\calM^{\ord}$
of $\calM_{\sf K}$ is non-empty if and only if either $m$ is even or every
place $v$ of $F$ lying over $p$ is unramified in $B$ \cite{yu:2014}. 
Here we show the
opposite extreme case.
Let ${\calM}_{\sf K}^{\rm sp}\subset {\calM}_{\sf K}^{\rm ss} \subset \calM_{\sf K}$ denote  the superspecial and 
 supersingular locus of $\calM_{\sf K}$ (Section \ref{ss:Sh}). 
%This locus is  the smallest EO stratum of $\calA_{g,N}$.  
\begin{thm}\label{main.2}
  The superspecial locus $\calM_{\sf K}^{\sp}$  is non-empty.
\end{thm}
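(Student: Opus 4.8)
The plan is to exhibit a single superspecial $k$-point of $\bfM_{\sf K}$, where $k=\ol{\F}_p$; since $\calM_{\sf K}^{\sp}$ is by definition the locus of $\bfM_{\sf K}\otimes k$ parametrizing tuples $(A_0,\lambda_0,\iota_0,\bar\eta_0)$ with $A_0$ superspecial, this suffices. I would build such a tuple on a \emph{fixed} superspecial abelian variety, transporting onto it the skew-Hermitian datum $(V,\psi,\Lambda)$ furnished by Theorem~\ref{main.1}. Fix a supersingular elliptic curve $E/k$; then $\End(E)$ is a maximal order in $D:=\End^0(E)$, the quaternion $\Q$-algebra ramified exactly at $p$ and $\infty$, and the canonical polarization of $E$ induces the canonical involution of $D$. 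Since $2dm\ge 2$, every superspecial abelian variety of dimension $2dm$ over $k$ is isomorphic to $A:=E^{2dm}$, with $\End^0(A)=\Mat_{2dm}(D)$, and the polarizations of $A$ induce precisely the positive involutions of symplectic type on $\Mat_{2dm}(D)$.

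The first step is to produce, up to isogeny, the PEL structure on $A$: a $\Q$-algebra homomorphism $\iota\colon B\embed\Mat_{2dm}(D)$ together with a positive involution $\dagger$ of symplectic type on $\Mat_{2dm}(D)$ satisfying $\iota(b^*)=\iota(b)^\dagger$, so arranged that the induced skew-Hermitian $(B,*)$-structure on the $\ell$-adic and $p$-adic realizations matches $(V,\psi)$ and that the determinant condition of $\mathscr D$ holds. Existence of the bare embedding is a local--global matter: at $\infty$ one embeds $B\otimes\R\cong\Mat_2(\R)^d$ — here \emph{total indefiniteness} of $B$ enters — into $\Mat_{2dm}(\H)$; at finite $\ell\ne p$ the embedding exists since $V\otimes\Q_\ell$ is a faithful $B\otimes\Q_\ell$-module of the right $\Q_\ell$-dimension; at $p$ one embeds $B\otimes\Q_p=\prod_{v\mid p}B_v$ into $\Mat_{2dm}(D_p)$, with $D_p$ the quaternion division algebra over $\Q_p$. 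Compatibility with the involutions and with the prescribed skew-Hermitian similitude class is exactly the classification of skew-Hermitian $(B,*)$-modules underlying Theorem~\ref{main.1}: the local invariants match — at a ramified $v\mid p$ the relevant local datum is available precisely because $\ord_{\Pi_v}(\gamma)$ is odd there (Theorem~\ref{main.1}(c)) — and one glues by the Hasse principle. This yields a supersingular abelian variety with $B$-action, polarization class, and determinant condition realizing $\mathscr D$, i.e.\ a point of $\bfM_{\sf K}\otimes k$ up to isogeny whose underlying abelian variety is superspecial.

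The second step is to descend to an honest integral point, and this is where the real work lies. Away from $p$, a prime-to-$p$ isogeny lets one arrange that each $\ell$-adic Tate module is the self-dual lattice $\Lambda\otimes\Z_\ell$ — possible because self-dual $O_B$-lattices in $(V,\psi)$ are unique up to isomorphism by Theorem~\ref{main.1} — after which a $\K$-level structure exists, the resulting $T^p$ being a self-dual $O_B\otimes\wh\Z^p$-lattice in $(V\otimes\A_f^p,\psi)$ and hence in the $\bfG(\A_f^p)$-orbit of $\Lambda\otimes\wh\Z^p$. At $p$ one must choose, inside the isocrystal, an $\sfF$- and $\sfV$-stable $W(k)$-lattice $M$ with $\sfF M=\sfV M$ (superspeciality), self-dual under the polarization pairing (principality), stable under $O_B\otimes\Z_p$, and satisfying the determinant condition on $M/\sfV M$: equivalently, the superspecial point of the local Rapoport--Zink datum of $\mathscr D$. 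For $v\mid p$ unramified in $B$ this reduces by Morita equivalence to the Hilbert case over $F_v$ and is classical ($E[p^\infty]$ tensored up to $O_{F_v}$, twisted by the different when $F_v/\Q_p$ ramifies); the genuinely new point is at the places $v\mid p$ with $B_v$ a division algebra, where one must exhibit the right superspecial special formal $O_{B_v}$-module carrying a compatible perfect alternating form, and this is again governed by Theorem~\ref{main.1}(c). This $p$-adic construction is the main obstacle. Alternatively, once the Rapoport--Zink uniformization of $\calM_{\sf K}^{\mathrm{ss}}$ is set up — as it must be for the later count of irreducible components — it suffices to note that $\calM_{\sf K}^{\mathrm{ss}}$ is the image of $\bfG(\Q)\backslash\bigl(\calN(k)\times\bfG(\A_f^p)/\K^p\bigr)$ for the associated Rapoport--Zink space $\calN$, that $\bfG(\Q)\backslash\bfG(\A_f)/\K\ne\emptyset$ by Theorem~\ref{main.1}, and that $\calN(k)$ contains a point of minimal vertex-lattice type, which is superspecial; its image then lies in $\calM_{\sf K}^{\sp}$.
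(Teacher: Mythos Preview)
Your outline is essentially the paper's approach: Theorem~\ref{exist} starts from a supersingular polarized $O_B$-abelian variety (cited from \cite{slope}) and modifies it by quasi-isogenies to realize a prescribed principally polarized superspecial Dieudonn\'e $O_B\otimes\Z_p$-module satisfying the determinant condition, whose existence (Proposition~\ref{existloc}) is exactly the ``main obstacle'' you name and which the paper carries out by explicit construction, case-by-case on whether $B_v$ is split (Morita reduction to Hilbert--Siegel, as you say) or division, and in the latter case on the parity of $\ord_{\Pi_v}(\gamma)$. Your Rapoport--Zink alternative does not sidestep this work --- exhibiting a point of $\calN(k)$ \emph{is} the same Dieudonn\'e-module construction --- and in any case the paper's uniformization is only set up under the hypothesis that $p$ is unramified in $B$, whereas Theorem~\ref{main.2} carries no such hypothesis.
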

Note that there is no assumption on  $p$ in Theorem~\ref{main.2}. 
The main step of the proof % of Theorem~\ref{main.2} 
is to construct a principally polarized \dieu $O_B\otimes \Z_p$-module  satisfying the determinant condition. 
This requires the equivalent conditions in Theorem~\ref{main.1}.  
For more details, see Section \ref{ss:spmod}.
 
%(that is, $B\otimes \Qp$ is a product of matrix algebras over unramified $\Qp$-extensions), 
In the rest of this introduction we assume that $p$ is unramified in $B$.  
%Let $(B,*,O_B, V,\psi, \Lambda, h_0)$ the unique principal integral
%PEL-datum of rank $m$ extending the input datuam $(B,*,O_B)$ for
%$\bfM$. 
Then ${\sf K}_p \coloneqq \bfG(\Z_p) \subset \bfG(\Q_p)$ is a hyperspecial parahoric subgroup. 
In this case, an exact formula for the cardinality of the superspecial locus $\calM^{\rm sp}_{\sf K}$ was given in \cite[Theorem 1.3]{yu:IMRN-2008},  using Shimura's mass formula \cite{Shimura2}. 
However, in \cite{yu:IMRN-2008} it is implicitly assumed that there exixts a self-dual $O_B$-lattice $\Lambda$ (Theorem  \ref{main.1}) and that the superspecial locus $\calM^{\rm sp}_{\sf K}$ is non-empty (Theorem \ref{main.2}). 
In this paper we also correct the formula given in {\it loc.~cit.}  for local terms at some places $v\nmid p$ of $F$ ramified in $B$ due to the conditions in Theorem~\ref{main.1} (see Remark \ref{rem:correction}).
%\chiafu{ 
%We also correct the exact formulas in \cite[Theorems 1.2 and 1.3]{yu:IMRN-2008} for the cardinality of the  superspecial locus $\calM^{\rm sp}$ (Remark \ref{rem:correction}).  points the for local terms at some places $v\nmid p$ of $F$ ramified in $B$ due to the conditions in Theorem~\ref{main.1}. (I would like to correct my formula in \cite[Theorem 1.3]{yu:IMRN-2008} somewhere, not sure where to put..It is good to put the corrected formula in Theorem 1.4 directly?).} 

In \cite{Hamacher}, Hamacher gave a formula for the
dimension of Newton strata on the reduction of PEL Shimura 
varieties (of type  A or C) with hyperspecial level structure   at $p$.  
In the moduli scheme  $\calM_{\sf K}$ of type C, the unique closed Newton stratum (called the basic locus) is precisely the supersingular locus $\calM^{\rm ss}_{\sf K}$: It is 
equidimensional of dimension (cf.~\cite[Theorem 5.1]{slope})
\begin{equation}
   \dim \calM^{\rm ss}_{\sf K} = \sum_{v|p} \left (  \lfloor f_v/2  \rfloor \frac{m(m+1)}{2}+(f_v-2 \lfloor f_v/2 \rfloor)\cdot  \lfloor m^2/4 \rfloor  \right ), 
\end{equation}
where $v$ runs over the places of $F$ over $p$ and $f_v$ is the inertia degree of $v$.

We give an explicit formula for the number of irreducuble components of $\calM_{\sf K}^{\rm ss}$. 
Let $D_{p,\infty}$ denote the unique quaternion
$\Q$-algebra ramified precisely at $\{p,\infty\}$, and 
$D$ the unique quaternion
$F$-algebra such that $B\otimes_\Q D_{p,\infty}\simeq \Mat_2(D)$. 
Let $\Delta'$ be the discriminant of $D$ over $F$. 
For a finite place $v$ of $F$, let $q_v\coloneqq p^{f_v}$ be the cardinality of the residue field  of $v$. 
%????Note that For $v \mid p$,  one has that $v \mid \Delta'$ if and only if  $f_v$ is odd. ????
%Our main result is as follows. 
\begin{thm}\label{main.3}
  Assume that $p>2$ is unramified in $B$. Then the number of irreducible components of the supersingular locus
  $\calM^{\rm ss}_{\sf K}$ is equal to 
\begin{align*}
\lvert \bfG(\mathbb Z/N\mathbb Z)\rvert  \cdot \prod_{v \mid p}
\begin{pmatrix}
f_{v}
\\ 
\lfloor f_{v}/2 \rfloor 
\end{pmatrix}^m 
\cdot 
\frac{(-1)^{dm(m+1)/2}}{2^{md}}
\cdot \prod_{j=1}^{m}
\zeta_F(1-2j) \cdot \prod_{v \mid \Delta'}\lambda_v,
\end{align*}
where $\zeta_F(s)$ is the Dedekind zeta function
 of $F$, and for $v \mid \Delta'$, 
 \begin{align}\label{eq:intro}
     \lambda_v= 
     \begin{dcases*}
        \prod_{i=1}^m (q_v^i+(-1)^i) & if $m$ is odd, or $v \nmid p$ and $\ord_{\Pi_v}(\gamma)$ is odd; 
        \\ 
\prod_{i=1}^{m/2} (q_v^{4i-2} -1) & 
otherwise. 
     \end{dcases*}
 \end{align}
\end{thm}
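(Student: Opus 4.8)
The plan is to identify the number of irreducible components of $\calM_{\sf K}^{\rm ss}$ with the mass of a quaternionic unitary similitude group and then to evaluate that mass place by place. Since $p$ is unramified in $B$ the level subgroup $\K_p=\bfG(\Z_p)$ is hyperspecial and $\calM_{\sf K}^{\rm ss}$ is precisely the closed (basic) Newton stratum. Theorem~\ref{main.2} provides a superspecial, hence supersingular, point, and Theorem~\ref{main.1} provides the accompanying principally polarized $O_B\otimes\Z_p$-\dieu module satisfying the determinant condition; consequently the basic locus carries a Rapoport--Zink uniformization, with formal completion $I(\Q)\backslash(\calN\times\bfG(\A_f^p)/\K^p)$, where $\calN$ is the Rapoport--Zink formal scheme at a supersingular point and $I$ is the $\Q$-group of $\psi$-similitudes of the polarized $O_B$-isocrystal there. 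The algebra of $O_B$-linear self-isogenies of that isocrystal is $\Mat_m(D)$ with a positive involution induced by the polarization, so $I\cong\GU(m,D)$ is the quaternionic unitary similitude group over the totally definite quaternion $F$-algebra $D$; in particular $I(\A_f^p)=\bfG(\A_f^p)$, the group $I(\R)$ is compact modulo centre, and $I_v$ is nonsplit at a finite place $v$ exactly when $v\mid\Delta'$. Passing to top-dimensional irreducible components, and using the description of $\calN^{\rm red}$ as a union of (classical) Deligne--Lusztig varieties underlying the dimension computation of \cite{slope}, one identifies the set $\pi_0^{\rm top}(\calN)$ of top-dimensional components with $I(\Q_p)/\calK_p$ for an explicit compact open subgroup $\calK_p\subset I(\Q_p)$; hence $\mathrm{Irr}(\calM_{\sf K}^{\rm ss})\simeq I(\Q)\backslash I(\A_f)/\calK_p\K^p$. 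As $N\ge 3$ rigidifies every object, all stabilizers in this double coset are trivial and its cardinality equals $\Mass(I,\calK_p\K^p)$.

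The second step is to evaluate $\Mass(I,\calK_p\K^p)$ as a product of local densities. The level away from $p$ contributes the factor $\lvert\bfG(\Z/N\Z)\rvert=[\bfG(\wh\Z^p):\K^p]$. The ``principal'' part $\tfrac{(-1)^{dm(m+1)/2}}{2^{md}}\prod_{j=1}^{m}\zeta_F(1-2j)$ is the mass of $\GU(m,D)$ carrying a maximal parahoric level at every finite place, obtained from the archimedean contribution and the standard local densities of a quaternionic unitary group over the totally real field $F$; this is Shimura's mass formula \cite{Shimura2} in the shape already used in \cite{yu:mass_hb, yu:IMRN-2008, yu:mass_siegel}, and I would pin it down by comparing $\Mass(I,\calK_p\K^p)$ with the superspecial mass of \cite{yu:IMRN-2008} (corrected as in Remark~\ref{rem:correction}). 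What remains are the local factors at the places above $p$. At each $v\mid p$ one must identify the parahoric $\calK_v$ stabilizing a generic \dieu lattice of a top-dimensional component and compute its local density relative to the maximal (``superspecial'') parahoric: since $F_v/\Q_p$ is unramified of residue degree $f_v$, the \dieu module at $v$ is, in each of the $m$ Morita factors, a cyclic chain indexed by the $f_v$ embeddings $O_{F_v}\embed W(\ol{\F}_p)$, and the top-dimensional components correspond to the $\binom{f_v}{\lfloor f_v/2\rfloor}$ admissible combinatorial types in each factor; this produces the factor $\binom{f_v}{\lfloor f_v/2\rfloor}^{m}$, already visible in the dimension formula of \cite{slope}. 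When in addition $v\mid\Delta'$ --- equivalently $f_v$ is odd, or $v\nmid p$ and $v$ is ramified in $B$ --- the group $I_v$ is quaternionic unitary over the ramified $F_v$-algebra $D_v$, and $\calK_v$ carries the further local density $\lambda_v$ of \eqref{eq:intro}: the relevant genus of \dieu lattices over $D_v$ is unique when $m$ is odd, so the ``component'' type agrees with the superspecial type and $\lambda_v=\prod_{i=1}^{m}(q_v^{i}+(-1)^{i})$, whereas for $m$ even there are two types and $\calK_v$ is forced to the one producing $\prod_{i=1}^{m/2}(q_v^{4i-2}-1)$ unless $v\nmid p$ and $\ord_{\Pi_v}(\gamma)$ is odd, which is exactly the local form of the condition in Theorem~\ref{main.1}(c). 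Assembling the local contributions yields the asserted formula.

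The main obstacle is the local analysis at $p$ and at the ramified places: one must (i) pin down $\pi_0^{\rm top}(\calN)$ and the stabilizer $\calK_p$ precisely enough to evaluate its local density against that of the superspecial parahoric, which for $v\mid p$ with $f_v>1$ requires controlling the Goren--Oort-type stratification of the local model, and for $v\mid\Delta'$ a local \dieu-lattice-chain analysis over the ramified quaternion algebra in the spirit of \cite{LO}; and (ii) decide, at each ramified place, whether the lattice at a generic point of a top-dimensional component lies in the same $I_v$-orbit as the superspecial one, the answer being governed by the same parity invariant $\ord_{\Pi_v}(\gamma)\bmod 2$ that controls the existence of self-dual lattices in Theorem~\ref{main.1}. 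Granted these local inputs, the global assembly is formal --- it is the mass formula for $\GU(m,D)$ together with the rigidity coming from $N\ge 3$ --- and the deduction of the Rapoport--Zink uniformization from Theorems~\ref{main.1} and \ref{main.2} is routine.
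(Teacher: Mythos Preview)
Your overall architecture is the paper's: Rapoport--Zink uniformization (in the affine Deligne--Lusztig formulation, Theorem~\ref{inner}) reduces $|\Irr(\calM_{\K}^{\rm ss})|$ to a mass for the inner form $I$, rigidity from $N\ge 3$ turns that mass into a cardinality, and the Gan--Hanke--Yu formula (Proposition~\ref{mass}) supplies the factor $\tfrac{(-1)^{dm(m+1)/2}}{2^{md}}\prod_j\zeta_F(1-2j)$ together with local corrections $\lambda_v$ at the non-hyperspecial places. The discrepancy is in how the two $p$-adic ingredients are obtained. You write $\pi_0^{\rm top}(\calN)\simeq I(\Q_p)/\calK_p$ as a \emph{single} homogeneous space and then separately produce the binomial factor from ``admissible combinatorial types''; these are in tension, because $J_b(\Q_p)=I(\Q_p)$ does \emph{not} act transitively on $\Irr(X_\mu(b))$, and $\prod_{v\mid p}\binom{f_v}{\lfloor f_v/2\rfloor}^m$ is precisely the number of $J_b(\Q_p)$-orbits, not an index inside a mass. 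The paper computes this number (Proposition~\ref{prop:ADLV}) via the Chen--Zhu theorem of Nie and Zhou--Zhu, identifying it with $\dim V_\mu(\underline{\lambda}_G(b))$ for the Langlands dual group; an explicit determination of $\underline{\lambda}_G(b)$ (Lemma~\ref{lgb}) then yields the binomials. Your proposed \dieu-module/Goren--Oort analysis could in principle reach the same conclusion, but it would amount to re-deriving a special case of that theorem.

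The second point you leave open is the identification of the stabilizer $J_b(Y)$ of a component. The paper invokes the He--Zhou--Zhu result that $J_b(Y)$ is a parahoric of \emph{maximum volume} in $J_b(\Q_p)$; a direct comparison of the volumes of the standard parahorics $P_c^1$ (Lemma~\ref{mphr}, via \eqref{eq:lambda}) then shows that for $v\mid p$, $v\mid\Delta'$ the group $J_{b,v}^1(Y)$ is conjugate to $P_0^1$ when $m$ is odd and to $P_{m/2}^1$ when $m$ is even, giving \eqref{eq:J(Y)} and hence the dichotomy in $\lambda_v$. Your sketch (``two types, $\calK_v$ is forced to one'') lands on the right answer but does not supply this mechanism. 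Finally, for $v\nmid p$, $v\mid\Delta'$ the relevant local group is simply the stabilizer in $\bfG^1_v(\Q_\ell)$ of the self-dual lattice $\Lambda_v$ fixed once and for all by the level $\K^p$ --- it has nothing to do with any component --- and \eqref{eq:LC.15} together with Lemma~\ref{quatclass} pin it down as $P_0^1$ or $P_{m/2}^1$ according to the parity of $\ord_{\Pi_v}(\gamma)$ (see \eqref{eq:G(Z)}); your phrasing (``the lattice at a generic point of a top-dimensional component'') conflates the $p$-adic and prime-to-$p$ roles.
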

Here we give a sketch of the proof. We first discuss the affine Deligne-Lusztig variety $X_{\mu}(b)$ corresponding to the supersingular locus (Section \ref{ss:adlv}). 
This variety admits an action of the Frobenius twisted centralizer group  $J_b(\Q_p)$.  
The set of orbits of the irredicible  componenets $\Irr(X_{\mu}(b))$ under the action of $J_b(\Q_p)$ is in natural
bijection with the “Mirkovic-Vilonen basis” of a certain weight space of a representation of the
dual group of $\bfG_{\Q_p}$, which we will describe explicitly.  
Next we  describe the supersingular locus via the $p$-adic uniformization theorem of Rapoport and Zink \cite{RZ} as a quotient of the affine Deligne-Lusztig variety $X_{\mu}(b)$. 
Then the number of irreducible components of the supersingular  locus can be written as the cardinality  of the set $J_b(\Q_p)\backslash \Irr(X_{\mu}(b))$ multiplied by  the  {\it mass} of an inner form $I$ of $\bfG_{\Q}$. 
Here, the mass of $I$ with respect to an open compact subgroup $U$ of $I(\A_f)$  
is defined
as a weighted cardinality of the double coset space $I(\Q) \backslash I(\A_f)/U$ (Section \ref{ss:mass}).   
Finally we give an explicit formula for the mass with respect to the subgroup whose local factor at $p$ is the stabilizer of an irreducible component of $X_{\mu}(b)$ and factors outside $p$ are  the stabilizers of  self-dual lattices in skew-Hermitian modules. 
We note that our method also applies to the basic locus of  a ${\rm GU}(r,s)$ Shimura variety (of type A) associated to an imaginary quadratic field \cite{TY2}.

%This paper is orgnized as follows.

\notation 
All schemes are assumed to be locally Noetherian. If X is a scheme (resp.~a module) over a commutative ring $R$ and $R\to R'$ is a homomorphism of commutative rings, write $X_{R'}$ for $X\otimes_R R'$. Denote by $\bbN$ the set of positive integers, $\A$ the adele ring of $\Q$, and $\A_f$ the finite adele ring of $\Q$. 
%For a $\Z$-module or $\Q$-module $N$, write $N_p$ for $N\otimes_{\Z} \Zp$ or $N\otimes_{\Q} \Qp$, respectively. 
If $F$ is a number field with ring of integers $O_F$, denote by $F_v$ its completion at a place $v$ of $F$ and $O_{F_v}$ the completion of $O_F$ at $v$. 
For an $F$-module or $O_F$-module $N$, write $N_v$ for $N\otimes_F F_v$ or $N\otimes_{O_F} O_{F_v}$, respectively.

\thank Part of the present work was carried over during the authors' stay at the Korea Institute for Advanced Study. They thank Professor Youn-Seo Choi for his kind hospitality and the institute for excellent working conditions. 
Terakado is partially supported by JSPS KAKENHI Grant Number 23K19014. 
Xue is partially supported by the National Natural Science Foundation of China grant No.~12271410 and No.~12331002. 
Yu is partially supported by the NSTC grant 112-2115-M-001-010 and the Academia Sinica Investigator Grant AS-IA-112-M01.

%Yu is partially supported by the grant MoST 109-2115-M-001-002-MY3.
%and the AS IVA grant .  

\section{Local lattices}
\label{sec:LC}

We consider some variants of lattices in Hermitian spaces over quaternion algebras over  local
fields that are used in this paper. 
%As usual,
%$\delta_{ij}$ denotes Kronecker's
%symbol, namely $\delta_{ij}=1$ or $0$ according to whether $i=j$ or not. 
Let $F_0$ be a
non-Archimedean local field of \ch not equal to two, with ring of integers $O_{F_0}$. 
Let 
$F/F_0$ be a finite separable field extension, with ring of integers $O_F$. 
We fix a uniformizer $\pi$ of $O_F$. 
Let  $\mathfrak 
 D^{-1}_{F/F_0}$ be the inverse different  of
  $F/F_0$. 
%Let $\delta \in F$ be an element such that   $\mathfrak D^{-1}_{F/F_0}=\delta^{-1} O_F$. 

  \subsection{Lattices in symplectic spaces}\label{ss:split}
\begin{defn}\label{def:sym}
    A \emph{symplectic $F$-space} is a pair $(V, \phi_F)$, where $V$ is a finite dimensional $F$-space and $\phi_F: V \times V \to F$ is a non-degenerate alternating $F$-bilinear pairing. 
    An {\it $O_F$-lattice $\Lambda$ in} $(V, \phi_F)$  is a finite free $O_F$-submodule $\Lambda \subset V$ such that  $F \Lambda= V$.  Its {\it dual lattice}  is 
    \[\Lambda^{\vee, \phi_F}\coloneqq \{x\in V\ \mid  \phi_F(x,\Lambda)\subset O_{F} \}.\]
    A lattice $\Lambda$ is said to be  \emph{self-dual} 
if $\Lambda^{\vee, \phi_F}=\Lambda$. 
    Two symplectic $F$-spaces $(V, \phi_{F})$ and $(V', \phi'_{F})$ (resp.~$O_F$-lattices $(\Lambda, \phi_{F})$ and $(\Lambda', \phi'_{F})$) are said to be \emph{isomorphic} if there is an $F$-linear isomorphism $f: V\to V'$ (resp.~an $O_F$-linear isomorphism $f : \Lambda\to \Lambda'$) which preserves the pairings. 
\end{defn}
For any integer $n \geq 1$, 
there exists a unique symplectic $F$-space of dimension $2n$. 
The classification of $O_F$-lattices is also well-known:
\begin{lemma}\label{symclass}
 Let $\Lambda$ be an $O_F$-lattice in a symplectic $F$-space $(V, \phi_F)$ of dimension $2n$. 
Then there exist a sequence of integers $d_1 \leq \cdots\leq  d_n$ and an $O_F$-basis $e_1, \ldots, e_{2n}$ of $\Lambda$ such that 
\begin{itemize}
    \item $\phi_F(e_i, e_j)=0$ except for $j-i=\pm n$; and 
    \item 
    $\phi_F(e_i, e_{n+i})=\pi^{d_i}$ for $i=1, \ldots, n$.
\end{itemize}
Moreover, the sequence $(d_1, \ldots, d_n)$ determines $\Lambda$ uniquely up to isomorphism. 
\end{lemma}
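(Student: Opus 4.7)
The plan is to prove existence by induction on $n$, by iteratively splitting off hyperbolic planes of minimal valuation, and to prove uniqueness via the Smith normal form of the Gram matrix.

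For existence, I would induct on $n$, the base case $n=0$ being trivial. For the inductive step, set
\[
d_1 := \min\{\ord_\pi(\phi_F(x,y)) : x,y \in \Lambda\},
\]
which is finite because $\phi_F$ is non-degenerate on $V$ and $\Lambda$ spans $V$ over $F$. Pick $x_0, y_0 \in \Lambda$ attaining this minimum; after multiplying $y_0$ by a suitable unit of $O_F$ we may assume $\phi_F(x_0, y_0) = \pi^{d_1}$. The key claim is that
\[
\Lambda = O_F x_0 \oplus O_F y_0 \oplus \Lambda', \qquad \Lambda' := \{z\in\Lambda : \phi_F(z,x_0) = \phi_F(z,y_0) = 0\}.
\]
To verify it, for any $z\in\Lambda$ the minimality of $d_1$ forces $\phi_F(z,x_0) = \pi^{d_1}a$ and $\phi_F(z,y_0) = \pi^{d_1}b$ with $a,b \in O_F$; a direct computation (using alternation of $\phi_F$) shows $z + ay_0 - bx_0 \in \Lambda'$, giving the spanning statement, and non-degeneracy of $\phi_F$ on $F x_0 \oplus F y_0$ gives directness.

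Having split off the hyperbolic plane, I would note that $V' := (F x_0 + F y_0)^\perp$ is a symplectic $F$-space of dimension $2(n-1)$ and that $\Lambda'$ is an $O_F$-lattice in $V'$ (it is free of rank $2(n-1)$ as a summand, and spans $V'$ because any $v\in V'$ satisfies $\pi^Nv\in\Lambda'$ for $N$ large). Applying the inductive hypothesis to $(\Lambda', \phi_F|_{V'})$ yields vectors $e_2,\dots,e_n, e_{n+2},\dots,e_{2n}$ with integers $d_2 \leq \cdots \leq d_n$; setting $e_1 := x_0$ and $e_{n+1} := y_0$ gives the desired basis, and the inequality $d_1 \leq d_2$ is forced by the defining minimality of $d_1$.

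For uniqueness, I would observe that the Gram matrix $(\phi_F(e_i, e_j))$ in any $O_F$-basis of $\Lambda$ represents the $O_F$-linear map $\Lambda \to \Hom_{O_F}(\Lambda, O_F)$ induced by $\phi_F$, whose Smith normal form is an isomorphism invariant of the pair $(\Lambda, \phi_F)$. In the normal basis just constructed, the Gram matrix is block anti-diagonal with blocks $\pi^{d_i}$, and its Smith form equals $\mathrm{diag}(\pi^{d_1}, \pi^{d_1}, \pi^{d_2}, \pi^{d_2}, \ldots, \pi^{d_n}, \pi^{d_n})$. Hence the ordered sequence $d_1 \leq \cdots \leq d_n$ is recovered as half the sequence of invariant factors and is therefore an isomorphism invariant.

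The principal step is the ``minimal valuation splits off a hyperbolic plane'' argument in the induction; everything else is formal. I do not anticipate any serious obstacle, only the minor bookkeeping point of checking that $\Lambda'$ has the correct rank and that $\phi_F|_{V'}$ is non-degenerate, both of which are immediate from the orthogonal decomposition $V = (Fx_0 \oplus Fy_0) \perp V'$.
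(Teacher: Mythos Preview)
Your argument is correct and is the standard proof of this classical fact. The paper does not actually prove this lemma: it introduces it with the phrase ``The classification of $O_F$-lattices is also well-known'' and states it without proof, so there is no approach to compare against. Your inductive splitting-off of a hyperbolic plane of minimal valuation, together with the elementary-divisor argument for uniqueness, is exactly the expected justification. One small remark: since the $d_i$ need not be non-negative, the Gram matrix lies in $\GL_{2n}(F)$ rather than $M_{2n}(O_F)$, so ``Smith normal form'' should be understood as the Cartan/elementary-divisor decomposition $G = UDV$ with $U,V \in \GL_{2n}(O_F)$; this is harmless but worth saying explicitly.
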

 Let $(V, \phi_F)$ be a symplectic $F$-space of dimension $2n$. 
 We define an $F_0$-group $\GSp_{F_0}(V, \phi_F)$  by 
\begin{align}\label{eq:gsp}
 \GSp_{F_0}(V, \phi_F)(R)= \{g\in \End_{F \otimes_{F_0} R}(V_R)  \mid  \exists\, c(g)\in R^\times
  \ \text{s.t.}\ \phi_F(gx,gy)=c(g)\phi_F(x,y), \ \forall\, x,y\in
  V_R \} 
\end{align}
for any commutative $F_0$-algebra $R$. 
We define an $F_0$-group $\Sp_{F_0}(V, \phi_F)$ by  the exact sequence 
\[ 1\longrightarrow \Sp_{F_0}(V, \phi_F) \longrightarrow \GSp_{F_0}(V, \phi_F) \xlongrightarrow{c} \bbG_{{\rm m}, F_0} \longrightarrow 1.\]
By the definition, we have $\Sp_{F_0}(V, \phi_F)\simeq \Res_{F/F_0}(\Sp_{2n, F})$, where ${\rm Sp}_{2n, F}$ is the symplectic group over $F$ and $\Res_{F/F_0}$ is the Weil restriction of scalars from $F$ to $F_0$.

\begin{lemma}\label{c_sym}  
Let $\Lambda$ be an $O_F$-lattice in a symplectic $F$-space $(V, \phi_F)$, and   
  $\Stab \Lambda$ its stabilizer in  $\GSp_{F_0}(V, \phi_F)(F_0)$. Then  the homomorphism $c: \GSp_{F_0}(V, \phi_F)(F_0) \to F_0^{\times}$ maps $\Stab \Lambda$ onto  $O_{F_0}^{\times}$. 
\end{lemma}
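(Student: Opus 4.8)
The plan is to prove the two inclusions $c(\Stab\Lambda)\subseteq O_{F_0}^{\times}$ and $O_{F_0}^{\times}\subseteq c(\Stab\Lambda)$ separately.

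For the first inclusion I would look at the fractional $O_F$-ideal $I\coloneqq\phi_F(\Lambda,\Lambda)$ generated by all values $\phi_F(x,y)$ with $x,y\in\Lambda$; it is nonzero because $\phi_F$ is non-degenerate and $F\Lambda=V$ (concretely, by Lemma~\ref{symclass} one has $I=\pi^{d_1}O_F$ with $d_1=\min_i d_i$). If $g\in\Stab\Lambda$, then $g\Lambda=\Lambda$ forces $c(g)I=\phi_F(g\Lambda,g\Lambda)=I$, hence $c(g)\in O_F^{\times}$. Since $c(g)\in F_0^{\times}$ by the definition of $\GSp_{F_0}(V,\phi_F)$, and $O_F\cap F_0=O_{F_0}$ (because $O_{F_0}$ is integrally closed in $F_0$ while $O_F$ is integral over $O_{F_0}$), we conclude $c(g)\in O_{F_0}^{\times}$.

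For the reverse inclusion, given $u\in O_{F_0}^{\times}$ I would construct an explicit element of $\Stab\Lambda$ with similitude factor $u$. Using the distinguished basis $e_1,\dots,e_{2n}$ of Lemma~\ref{symclass}, define the $F$-linear map $g\colon V\to V$ by $g(e_i)=u\,e_i$ for $1\le i\le n$ and $g(e_{n+i})=e_{n+i}$ for $1\le i\le n$. Since $u$ is a unit in $O_{F_0}\subseteq O_F$, the map $g$ is invertible and $g(\Lambda)=\Lambda$. Checking on pairs of basis vectors shows $\phi_F(gx,gy)=u\,\phi_F(x,y)$ for all $x,y\in V$: the only pairs with $\phi_F(e_i,e_j)\ne 0$ are those with $\{i,j\}=\{k,n+k\}$, where exactly one of the two vectors is scaled by $u$, and on every other pair both sides vanish. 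Hence $g\in\GSp_{F_0}(V,\phi_F)(F_0)$ with $c(g)=u$, and $g\in\Stab\Lambda$, so $u\in c(\Stab\Lambda)$.

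Neither step presents a genuine obstacle; the only point that requires care is that the similitude factor must lie in $F_0^{\times}$ rather than just $F^{\times}$, which is why the scaling element in the construction is taken in $O_{F_0}^{\times}$ (not merely $O_F^{\times}$), and why one double-checks that the relation $\phi_F(gx,gy)=u\,\phi_F(x,y)$ holds on \emph{all} pairs of basis vectors — this is immediate from the explicit shape of the basis in Lemma~\ref{symclass}. If a basis-free formulation is preferred, one can instead fix a decomposition $\Lambda=L\oplus L'$ into two $O_F$-submodules each isotropic for $\phi_F$ and set $g=u\cdot\id_L\oplus\id_{L'}$; this works because $\phi_F$ vanishes on $L\times L$ and on $L'\times L'$ and pairs $L$ with $L'$, but the basis version above is the most economical.
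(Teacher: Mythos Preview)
Your proof is correct and, for the surjectivity direction, takes essentially the same approach as the paper: both use the symplectic basis of Lemma~\ref{symclass} and exhibit a diagonal element (you take $\diag(u^n,1^n)$, the paper takes $\diag(1^n,t^n)$) with the prescribed similitude factor. The paper leaves the containment $c(\Stab\Lambda)\subseteq O_{F_0}^{\times}$ implicit, whereas your fractional-ideal argument makes it explicit; this is a reasonable and correct addition.
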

\begin{proof}
By Lemma \ref{symclass}, there exists 
    an $O_F$-basis $e_1, \ldots, e_{2n}$ of $\Lambda$ such that 
    $\phi_F(e_i, e_j)=0$ 
    except for $j-i=\pm n$. 
    For any $t \in O_{F_0}^{\times}$, 
    we put $g = \diag(1^n, t^n) \in \GL_{2n}(O_F)=\Aut_{O_F}(\Lambda)$. 
   Then $\phi_F(gx, gy)=t \cdot \phi_F(x, y)$ for all $x, y \in \Lambda$. 
This implies that  $g \in \Stab \Lambda$ and  $c(g)=t$. 
\end{proof}

%First, suppose that $B=\Mat_2(F)$.  By  Lemma~\ref{LC.5}, there exists
%a $B$-basis $\{e_1, \cdots, e_n\}$ of $V$  such that
%$\varphi_B(e_i, e_j)=\delta_{ij}$. 
%We put $\Lambda=\sum_{i=1}^n \grb e_i$,
%where $\grb$ is the fractional left $O_B$-ideal generated by
%$\left(\begin{smallmatrix}
 % 1 & 0 \\ 0 & 1/\delta
%\end{smallmatrix}\right)$. 
%Then a straightforward calculation shows 
%$\Lambda^\vee=\Lambda$. To show its uniqueness, we claim  
%that  a self-dual lattice $\Lambda$ in $(V,
%  \psi)$ is always a maximal lattice with scale  $\mathfrak a$ in $(V,
%  \varphi_B)$.  
 %Here, the scale $s(\Lambda)$ of a lattice  $\Lambda$ in $(V, \varphi_B)$ is
% the two-sided fractional $O_B$-ideal generated by the
%elements $\varphi_B(x, y)$ for all $x, y\in \Lambda$,  %similar to the symplectic case.
%\footnote{We are following 
%Jacobowitz's terminology  \cite[\S4]{Jacobowitz-HermForm}. 
%The scale $s(\Lambda)$ here is called the \emph{norm} of $\Lambda$ by 
%Shimura in 
 % \cite[\S2.3]{Shimura1963-AltHermForms}.}
%and  we say that  $\Lambda$ is
%maximal if it is a maximal one among the   lattices in $V$ with
%the same scale.  
%  Indeed,  we have  
%$s(\Lambda)\subseteq \mathfrak a$ by  %(\ref{eq:LC.15})
%,
%and there exists a maximal lattice
%$M\supseteq \Lambda$ with $s(M)=\mathfrak a$ thanks to 
%\cite[Proposition~2.10]{Shimura1963-AltHermForms}. It satisfies that   $\Lambda=\Lambda^\vee\supseteq
%M^\vee\supseteq M \supseteq \Lambda$, and hence  $\Lambda=M$.    
%This verifies the claim,  and   the uniqueness follows from 
 %\cite[Propositions~2.11]{Shimura1963-AltHermForms}.
\subsection{Hermitian lattices over division quaternion algebras}\label{ss:nsp}
In this and next subsections, 
let 
$B$ be a quaternion $F$-algebra (i.e.~a central simple $F$-algebra of dimension $4$). 
Let $x\mapsto \bar x\coloneqq \Tr_{B/F}(b)-b$ denote the 
canonical involution on   $B$. 
Let $*$ be an involution on $B$ of the first kind, that is, it fixes
$F$ element-wisely. 
We assume that $*$ is an orthogonal involution  on
  $B$ \cite[Definition~2.5]{book-of-involution}. 
  Then there exists an
  element $\gamma\in B^\times$ such that
  \begin{equation}
    \label{eq:LC.30}
    \bar \gamma+\gamma=0 \quad \text{and} \quad b^*= \gamma \bar b
    \gamma^{-1} \quad \text{for all} \quad  b\in B.
  \end{equation} 
Let $O_B$ be a maximal order in $B$ that is stable under $*$. 
  
In this subsection, we assume that $B$ is a  division algebra.
Then $O_B$ is the unique 
maximal order. We choose a uniformizer $\Pi$ of $O_B$ such that $\Pi \cdot \bar{\Pi}=\pi$. 
In this case, the group 
$B^\times$ normalizes $O_B$. 

\begin{defn}\label{def:quat}
A {\it Hermitian $(B, \bar{\cdot})$-module} is a pair $(V, \varphi_B)$, where 
$V$ is a finite free left $B$-module and $\varphi_B : V \times V \to B$ is a non-degenerate pairing such that 
\begin{equation}
\varphi_B(y,x)=\overline{\varphi_B(x, y)}
    \quad \text{and} \quad 
   \varphi_B(ax, by)=a \varphi_B(x, y)\bar{b},  
   \quad 
   {\text{for all}} \ 
a,b \in B, \ 
x, y \in V.
\end{equation}
A {\it Hermitian $(O_B, \bar{\cdot})$-lattice} $(\Lambda, \varphi_B)$ in $(V, \varphi_B)$ is a finite free left $O_B$-submodule $\Lambda \subset V$ such that  $B \Lambda= V$.  
 We often omit to mention the underlying space $V$. 
 The \emph{dual lattice of $\Lambda$} is 
\begin{equation}\label{eq:dual}
      \Lambda^{\vee, \varphi_B}\coloneqq \{x\in V\ \mid  \varphi_B(x,\Lambda)\subset O_{B} \}. 
\end{equation} 
\end{defn}
For any integer $n \geq 1$, there is a unique quaternion $(B, \bar{\cdot})$-module $(V, \varphi_B)$ of rank $n$ up to isomorphism \cite[Theorem 3.1]{Jacobowitz-HermForm}.

Let $i$ be an integer. 
We write  $(\pi^i)$ for the rank-one Hermitian $(O_B, \bar{\cdot})$-lattice  equipped  with a basis $e$ over $O_B$ and a form $\varphi_B$ such that  
  $\varphi_B(e, e)=\pi^i$.  
%If $\Lambda$ has an orthogonal basis $x_1, \ldots, x_n$ with $\varphi_B_B(x_j, x_j)=(a_j)$, we will write  
%\[ \Lambda \simeq (a_1) \oplus \cdots \oplus (a_n).\]
Further, the \emph{hyperbolic plane} $H(i)$ is defined  as the rank-two lattice equipped with basis $e, f$ and a form $\varphi_B$ such that 
\begin{align*}
    \begin{pmatrix}
        \varphi_B(e, e) & \varphi_B(e, f)
        \\
        \varphi_B(f, e) & \varphi_B(f, f)
    \end{pmatrix}
    = 
    \begin{pmatrix}
         0 & \Pi^i
         \\
         \bar{\Pi}^i  & 0
    \end{pmatrix}. 
\end{align*}  
 According to  \cite[\S4 and Proposition~6.1]{Jacobowitz-HermForm}, any Hermitian $(O_B, \bar{\cdot})$-lattice  admits a splitting 
\begin{equation}\label{eq:Jordan}
    \Lambda \simeq  \bigoplus _{i \in \Z}\Lambda_i, \quad 
    \Lambda_i 
 = \begin{dcases*}
     (\pi^{i/2}) \oplus \cdots \oplus (\pi^{i/2}) & if $i$  is even; 
     \\
     H(i) \oplus \cdots \oplus H(i) 
     & if $i$ is odd.  
    \end{dcases*}
\end{equation}
Note that if $i$ is even then   $(\pi^{i/2}) \oplus (\pi^{i/2}) \simeq H(i)$. 
Further, we have that $(\pi^i)=\Pi^{2i}\cdot (\pi^i)^{\vee, \varphi_B}$ and $H(i)=\Pi^{i} \cdot H(i)^{\vee, \varphi_B}$.
These imply  the following: 
\begin{lemma}\label{quatclass}
{\rm (1)}  
 There  exists a Hermitian 
  $(O_B, \bar{\cdot})$-lattice $\Lambda$ of  rank $n$ such that    $\Lambda=\Pi^i \cdot \Lambda^{\vee, \varphi_B}$  if and only 
  if either  
  $n$ or $i$ is even. 
 
  {\rm (2)} 
  Such a  lattice is unique up to isomorphism if exists, and written as 
  \begin{equation}\label{eq:modular}
      \Lambda \simeq 
      \begin{dcases*}
         H(i) \oplus \cdots \oplus H(i) & if $n$ is even; 
         \\ 
         H(i) \oplus \cdots  \oplus H(i) \oplus (\pi^{i/2})  
         & if $n$ is odd (and $i$ is even).
      \end{dcases*}
  \end{equation}
\end{lemma}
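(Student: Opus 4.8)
\textbf{Proof plan for Lemma \ref{quatclass}.}

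The statement is essentially a bookkeeping consequence of the Jordan splitting \eqref{eq:Jordan} together with the two identities recorded just above it, namely $(\pi^i)=\Pi^{2i}\cdot(\pi^i)^{\vee,\varphi_B}$ and $H(i)=\Pi^{i}\cdot H(i)^{\vee,\varphi_B}$. The plan is to reduce the question of when $\Lambda=\Pi^i\cdot\Lambda^{\vee,\varphi_B}$ holds to a statement about the Jordan type, and then read off both existence and uniqueness.

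First I would observe that dualization is compatible with orthogonal direct sums: if $\Lambda\simeq\bigoplus_j M_j$ then $\Lambda^{\vee,\varphi_B}\simeq\bigoplus_j M_j^{\vee,\varphi_B}$ inside $V$, and the condition $\Lambda=\Pi^i\cdot\Lambda^{\vee,\varphi_B}$ is equivalent to $M_j=\Pi^i\cdot M_j^{\vee,\varphi_B}$ for every $j$ (since $\Pi^i\cdot M_j^{\vee,\varphi_B}$ is again contained in the span of $M_j$, the sum being orthogonal, matching up the summands). Next, using the two displayed identities, $H(i')=\Pi^{i}\cdot H(i')^{\vee,\varphi_B}$ forces $i'=i$, while $(\pi^{i'})=\Pi^{i}\cdot(\pi^{i'})^{\vee,\varphi_B}$ forces $2i'=i$, i.e. $i$ even and $i'=i/2$. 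So $\Lambda$ is $\Pi^i$-modular (in the sense $\Lambda=\Pi^i\cdot\Lambda^{\vee,\varphi_B}$) precisely when its Jordan decomposition consists only of copies of $H(i)$ together with, if $i$ is even, possibly copies of $(\pi^{i/2})$; by the relation $(\pi^{i/2})\oplus(\pi^{i/2})\simeq H(i)$ we may absorb pairs of rank-one blocks, leaving at most one $(\pi^{i/2})$ summand. This is exactly the list in \eqref{eq:modular}: a lattice of rank $n$ of this shape has $n$ even when built from hyperbolic planes alone, and $n$ odd exactly when there is a single leftover $(\pi^{i/2})$, which requires $i$ even. That proves (1) in both directions: existence forces "$n$ or $i$ even," and conversely when $n$ or $i$ is even one of the two lattices in \eqref{eq:modular} is available. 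For (2), uniqueness up to isomorphism follows from the uniqueness of the Jordan splitting in \eqref{eq:Jordan} (equivalently from \cite[\S4 and Proposition~6.1]{Jacobowitz-HermForm}): any two $\Pi^i$-modular lattices of rank $n$ have the same Jordan type after normalizing the number of rank-one blocks to be $0$ or $1$, hence are isometric.

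The only genuinely delicate point is making precise the claim that a $\Pi^i$-modular lattice must have \emph{all} its Jordan blocks of the single scale forced by $i$ — i.e. that no mixing of scales can occur. I would handle this by noting that in the splitting \eqref{eq:Jordan} the summand $\Lambda_i$ is characterized intrinsically (the distinct $\Lambda_i$ sit in different "scales"), so $\Lambda^{\vee,\varphi_B}=\bigoplus_i\Lambda_i^{\vee,\varphi_B}$ respects this decomposition, and $\Pi^i\Lambda^{\vee,\varphi_B}=\Lambda$ can only hold if each scale is shifted onto itself, which by the two identities pins down a unique admissible scale. Everything else is routine.
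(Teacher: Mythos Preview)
Your proposal is correct and follows exactly the approach the paper intends: the paper's ``proof'' is simply the sentence that the Jordan splitting \eqref{eq:Jordan} together with the identities $(\pi^i)=\Pi^{2i}(\pi^i)^{\vee,\varphi_B}$ and $H(i)=\Pi^{i}H(i)^{\vee,\varphi_B}$ imply the lemma, and you have correctly unpacked what that implication means. Your observation that $\Pi^i$-modularity forces the Jordan decomposition to be concentrated at a single scale, and that this scale then determines the shape via the two identities, is precisely the intended argument; there is no genuine gap.
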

Let $(V, \varphi_B)$ be a Hermitian $(B, \bar{\cdot})$-module. We define an  $F_0$-group $\GU_{F_0}(V, \varphi_B)$  by 
\begin{align}\label{eq:nqs}
 \GU_{F_0}(V, \varphi_B)(R)= \{g\in \End_{B \otimes_{F_0} R}(V_R)  \mid  \exists\, c(g)\in R^\times
  \ \text{s.t.}\ \varphi_B(gx,gy)=c(g)\varphi_B(x,y), \ \forall\, x,y\in
  V_R \} 
\end{align}
for any commutative $F_0$-algebra $R$. 
We also define an $F_0$-group ${\rm U}_{F_0}(V, \varphi_B)$ by  the exact sequence 
\[1\longrightarrow {\rm U}_{F_0}(V, \varphi_B)  \longrightarrow \GU_{F_0}(V, \varphi_B) \xlongrightarrow{c} \bbG_{{\rm m},F_0} \longrightarrow 1.\] 
%\begin{align}\label{eq:u}
%{\rm U}_{F_0}(V, \varphi_B)(R) = \{g \in \GU_{F_0}(V, \varphi_B)(R) \mid c(g)=1\}.
%\end{align}
 \begin{lemma}\label{c}
Suppose that $B$ is a division  algebra. 
Let $(\Lambda, \varphi_B)$ be a Hermitian 
  $(O_B, \bar{\cdot})$-lattice,  in $V=B\Lambda$. 
  Let $\Stab \Lambda$ be the stabilizer of $\Lambda$ in $\GU_{F_0}(V, \varphi_B)(F_0)$. 
  Then the homomorphism $c : \GU_{F_0}(V, \varphi_B)(F_0) \to F_0^{\times}$ maps $\Stab \Lambda$ onto $O_{F_0}^{\times}$. 
\end{lemma}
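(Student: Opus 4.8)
The plan is to prove the two inclusions $c(\Stab\Lambda)\subseteq O_{F_0}^\times$ and $O_{F_0}^\times\subseteq c(\Stab\Lambda)$ separately, both by elementary lattice-theoretic arguments parallel to the symplectic case of Lemma~\ref{c_sym}.

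For the inclusion $c(\Stab\Lambda)\subseteq O_{F_0}^\times$, I would introduce the \emph{scale} $\mathfrak{s}\coloneqq\varphi_B(\Lambda,\Lambda)$, the additive subgroup of $B$ generated by the values $\varphi_B(x,y)$ with $x,y\in\Lambda$. Using the identity $\varphi_B(ax,by)=a\varphi_B(x,y)\bar b$ together with the stability of the (unique) maximal order $O_B$ under the canonical involution, one checks that $\mathfrak{s}$ is a nonzero two-sided fractional $O_B$-ideal; since every such ideal is of the form $\Pi^{k}O_B$ for a unique $k\in\Z$, we may write $\mathfrak{s}=\Pi^{k}O_B$. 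For $g\in\Stab\Lambda$ the similitude relation then gives $\Pi^kO_B=\mathfrak{s}=\varphi_B(g\Lambda,g\Lambda)=c(g)\,\Pi^kO_B$, whence $c(g)O_B=O_B$, i.e.\ $c(g)\in O_B^\times$. As $c(g)\in F_0^\times$ and $O_B^\times\cap F_0^\times=O_{F_0}^\times$, we conclude $c(g)\in O_{F_0}^\times$.

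For the reverse inclusion, fix $t\in O_{F_0}^\times$ and build $g\in\Stab\Lambda$ with $c(g)=t$ block by block, using the Jordan splitting \eqref{eq:Jordan} of $\Lambda$ into rank-one lattices $(\pi^{a})$ and hyperbolic planes $H(i)$. On a summand $(\pi^{a})=O_Be$, set $ge\coloneqq ue$ with $u\in O_B^\times$ chosen so that $\Nrd(u)=t$ --- this is possible because $\Nrd$ maps $O_B^\times$ onto $O_F^\times\supseteq O_{F_0}^\times$ --- so that $\varphi_B(ge,ge)=u\pi^a\bar u=\Nrd(u)\,\varphi_B(e,e)=t\,\varphi_B(e,e)$. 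On a hyperbolic plane $H(i)=O_Be\oplus O_Bf$, set $ge\coloneqq te$ and $gf\coloneqq f$; since $\bar t=t$ for $t\in F_0$, this multiplies the Gram matrix of $H(i)$ by $t$. Taking the direct sum of these maps over all Jordan blocks yields $g\in\Aut_{O_B}(\Lambda)$ with $\varphi_B(gx,gy)=t\,\varphi_B(x,y)$ for all $x,y\in\Lambda$, hence $g\in\Stab\Lambda$ and $c(g)=t$.

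I expect the only step carrying real content to be the reverse inclusion: one must realize \emph{every} unit $t$ as a similitude factor, and this hinges on the surjectivity $\Nrd(O_B^\times)=O_F^\times$ for the local division algebra $B$ (which in turn follows from surjectivity of $\Nrd\colon B^\times\to F^\times$ and its compatibility with the valuations). The inputs on two-sided ideals of $O_B$ and on the Jordan decomposition \eqref{eq:Jordan} are already in place, so no further obstacle is anticipated.
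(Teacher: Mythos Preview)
Your proof is correct and, for the surjectivity direction $O_{F_0}^\times\subseteq c(\Stab\Lambda)$, follows essentially the same route as the paper: reduce via the Jordan splitting \eqref{eq:Jordan} to rank-one summands $(\pi^a)$ and hyperbolic planes $H(i)$, then build an explicit similitude with factor $t$ on each block (using surjectivity of $\Nrd\colon O_B^\times\to O_F^\times$ on the rank-one blocks, and a diagonal scaling on $H(i)$). Your choice $ge=te,\ gf=f$ versus the paper's $ge=e,\ gf=tf$ is an immaterial symmetry.

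The one difference is that you also supply the inclusion $c(\Stab\Lambda)\subseteq O_{F_0}^\times$ via the scale ideal $\mathfrak{s}=\varphi_B(\Lambda,\Lambda)=\Pi^kO_B$, whereas the paper's proof only establishes surjectivity and leaves this containment implicit (it is not needed for the later application in the proof of Theorem~\ref{main.3}, where only the existence of a preimage of each $t\in O_{F_0}^\times$ is used). Your argument for this inclusion is clean and correct.
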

\begin{proof}
Take $t \in O_{F_0}^{\times}$. 
Suppose that there is a splitting $\Lambda\simeq \Lambda_1 \oplus \Lambda_2$,  and  that there are elements $g_j \in \End_{O_B}(\Lambda_j)$ with    $\varphi_B \vert_{\Lambda_j}(g_jx, g_j y)=t \cdot \varphi_B \vert_{\Lambda_i}(x, y)$ for all $x, y\in \Lambda_j$, $j=1, 2$. 
Then the sum $g_1 \oplus g_2$ can be regarded as an element of $\Stab \Lambda$ with similitude factor $t$.
Therefore, 
by    
 \eqref{eq:Jordan},  we may assume   $\Lambda \simeq H(i) = O_Be \oplus O_Bf$ or $\Lambda \simeq (\pi^{i})$ for some $i$. 
In the first case, if we define  an element $g\in \GL_{O_B}(\Lambda)$ by $g e=e$ and $gf=t f$, then   $c(g)=t$. 
In the second case, 
we identify $\Lambda$ with $O_B$, and we regard 
 $O_B$-linear endomorphisms of $\Lambda$ as the right multiplications of  elements of $O_B$. 
Since the reduced
norm  ${\rm Nrd}_{B/F} : O_B^{\times} \to 
O_F^{\times}$  is surjective, 
 there exists an element $u \in O_B^{\times}$ such that $u \bar{u}=t$. 
    For $x, y \in \Lambda$, we have $\varphi_B(u x, u y)=(x \cdot u)(   \ol{y\cdot u})=t x \bar{y} =t  \varphi_B(x, y)$, as desired. 
\end{proof} 

     \subsection{Field-valued skew-Hermitian  lattices over quaternion algebras}\label{sec:LC.2}  
\begin{defn}\label{def:skew}
An {\it $F_0$-valued 
skew-Hermitian
  $(B,*)$-module}
  %with $\epsilon\in\{\pm1\}$ here,  means 
%(or {\it $(B,*)$-module} in order to distinguish the involution
%involved) 
is 
a pair $(V,\psi)$, 
where $V$ is a finite free left $B$-module and $\psi:V\times V\to F_0$ is a
non-degenerate $F_0$-bilinear pairing such that 
\begin{equation}
  \label{eq:e-h}
  \psi(y,x)=- \psi(x,y) \quad \text{and}\quad 
  \psi(a x,y)=\psi(x, a^* y), \quad \text{for all} \ a\in B, \  x,y\in V.
\end{equation}

An {\it $F_0$-valued 
skew-Hermitian 
  $(O_B,*)$-lattice} and its {\it dual lattice $\Lambda^{\vee, \psi}$}  are defined in the same way as in Definition \ref{def:quat}.

\end{defn}
%that is, $x^*=\gamma \bar x
%\gamma^{-1}$ for some $\gamma\in B^\times$ with $\bar \gamma+\gamma=0$. 

% $\psi(b x,y)=\psi(x, b^*
% y)$ for $b\in B$ and $x,y\in V$. 
%We shall drop $*$ from the notation
%if there is no confusion. 

% Suppose that $B=\Mat_2(K)$. Put
% \begin{equation}
%   \label{eq:psiC}
%   \psi_C(x,y)\coloneqq\varphi_B(x,Cy), \quad \text{where}\quad C\coloneqq
%   \begin{pmatrix}
%     0 & 1 \\
%     -1 & 0
%   \end{pmatrix}.
% \end{equation}
% Using $\bar C=-C$ and $C^{-1} \bar b C=b^t$ (the transpose of $b$), 
% one gets
% \begin{equation}
%   \label{eq:psiC2}
%   \psi_C(y,x)=\epsilon\psi_C(x,y) \quad \text{and} \quad
%   \psi_C(bx,y)=\psi_C(x,b^ty), \quad \forall\,b\in B,\ x,y\in V. 
% \end{equation}
% In other words, $(V,\psi_C)$ is a $K_0$-valued $(\epsilon,t)$-Hermitian 
% $B$-module. We have a decomposition of $V$ into $K$-subspaces with
% respect to the pairing $\psi_C$:
% \begin{equation}
%   \label{eq:dec}
%   V=V_1\oplus V_2, \quad V_i=e_i V, \ i=1,2, 
% \end{equation}
% where $e_1=\diag(1,0)$ and $e_2=\diag(0,1)$. 
% % and
% % $V_i\coloneqqe_iV$ for $i=1,2$. We have the decomposition
% %of $F$-vector subspaces which respects $\psi_C$. 
% Thus, the classification of $K_0$-valued 
% $(\epsilon,*)$-Hermitian $B$-modules can be reduced to that of
% $K_0$-valued symmetric or alternating $K$-modules $(V_1,\psi_C)$.  

For an $F_0$-valued skew-Hermitian $(B, *)$-module $(V, \psi)$, we define $F_0$-groups $\GU_{F_0}(V, \psi)$ and ${\rm U}_{F_0}(V, \psi)$ in the same way as in \eqref{eq:nqs}.

  \subsubsection*{The split case}
  Now we assume  that $B$ is the matrix algebra. 
 We can take  an isomorphism $B \simeq \Mat_2(F)$ which identifies  $O_B$ with $\Mat_2(O_F)$. 
Let $\gamma \in B^{\times}$ be as in \eqref{eq:LC.30}. 
Then 
$\gamma$ normalizes $\Mat_2(O_F)$  and hence belongs to 
$F^\times \cdot  \GL_2(O_F)$. 
Without changing $*$, we may assume that
$\gamma\in \GL_2(O_F)$.  
% As in the previous subsection, we can write 
% $\gamma=\pi^ru$ for an element $u \in O_B^{\times}$ with $\bar{u}=-u$ and an integer $r$. 
For $g \in B$, 
let $g \mapsto g^t$ denote the  transpose.  
 We write  $C \coloneqq \begin{psmallmatrix}
         0 & 1
         \\
         -1  & 0
    \end{psmallmatrix} \in B$. 
    Then $\bar{C}=-C$ and $C^{-1} \bar{x} C= 
    x^t$.  

    Let $(\Lambda, \psi)$ be an $F_0$-valued skew-Hermitian $(B, *)$-lattice, in $V=B\Lambda$. 
   Let 
    \[ \tilde{\psi}(x, y) \coloneqq \psi(x, \gamma C y).\]
    Then $(V, \tilde{\psi})$ is an $F_0$-valued skew-Hermitian $(B, t)$-module. 
    Indeed, we have 
    \[ (\gamma C)^{-1}x^*(\gamma C)=C^{-1} \bar{x}C=x^t, \quad 
    \gamma^*=\gamma \bar{\gamma}\gamma^{-1}=-\gamma.\]

   Now we take an element $\delta \in O_F$  such that  $\mathfrak D_{F/F_0}^{-1}=\delta^{-1} O_F$. 
   We define $\tilde{\psi}_F : V \times V \to F$ as the unique $F$-bilinear alternating pairing such that 
    \begin{equation}\label{eq:lift}
    \tilde{\psi}(x, y)= 
    \Tr_{F/F_0}
\big( 
\delta^{-1}  \tilde{\psi}_F(x, y) \big), \quad x, y \in V.
\end{equation}
Then  $(V, \tilde{\psi}_F)$ is an $F$-valued skew-Hermitian $(B, t)$-module. 

Let $(\Lambda, \tilde{\psi}_F)$ be the restriction of $\tilde{\psi}_F$ to $\Lambda$. 
Then the assignment $(\Lambda, \psi) \mapsto (\Lambda, \tilde{\psi}_F)$ gives an equivalence of categories between the category of $F_0$-valued skew-Hermitian $(O_B, *)$-lattices and the category  of $F$-valued skew-Hermitian $(O_B, t)$-lattices, which preserves direct sums. 
Moreover, we have 
   \[ \Lambda^{\vee, \psi}=\{x\in V \mid \delta^{-1} \tilde{\psi}_F(x,\Lambda)
     \subseteq 
    \mathfrak D_{F/F_0}^{-1}\}=\{x\in V \mid  \tilde{\psi}_F(x,\Lambda)
     \subseteq 
   O_F\}=\Lambda^{\vee, \tilde{\psi}_F}.\]
 In particular, $\Lambda$ is self-dual with respect to $\psi$ if and only if it
 is so with respect to $  \tilde{\psi}_F$.
 
Finally let $V_1 \coloneqq \begin{psmallmatrix}
         1 & 0
         \\
         0  & 0
    \end{psmallmatrix}V$, regarded as an $F$-space,  
    and let  $\phi_F$ be the restriction of  $\tilde{\psi}_F$ to  $V_1$. 
   We similarly define an $O_F$-lattice $(\Lambda_1, \phi_F)$. 
    By  Morita equivalence, this     assignment $(\Lambda, \tilde{\psi}_F) \mapsto (\Lambda_1, \phi_F)$ gives an 
   equivalence of categories between the category of  $F$-valued skew-Hermitian $(O_B, t)$-lattices to the category   of $O_F$-lattices  in symplectic $F$-spaces, which preserves  direct sums and self-dual lattices. 

By Lemma \ref{symclass},  there exists a unique 
 self-dual $O_F$-lattice in a symplectic $F$-space of rank   $2n$ for each $n\geq 1$.   
%(see for instance \cite[Ch.~I, Corollary 3.5]{MH}). 
This and the above construction imply  the following. 
\begin{prop}\label{sd-split}
    Suppose that $B$ is the matrix algebra. 
Then, for each $n \geq 1$,  there exists a unique $F_0$-valued skew-Hermitian $(B, *)$-module of rank $n$ up to isomorphism. 
The same is true for a    self-dual $(O_B, *)$-lattice. 
\end{prop}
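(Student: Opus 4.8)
The plan is to reduce everything to the symplectic case via the chain of categorical equivalences that has just been set up. Since $B \simeq \Mat_2(F)$, the argument has three stages, and in each stage I would invoke an equivalence of categories (preserving direct sums and self-dual lattices) that translates the classification problem one step closer to ordinary symplectic $F$-spaces. First, the twisting map $(V,\psi) \mapsto (V,\tilde\psi)$ with $\tilde\psi(x,y) = \psi(x, \gamma C y)$ converts an $F_0$-valued skew-Hermitian $(B,*)$-module into an $F_0$-valued skew-Hermitian $(B,t)$-module; since $\gamma \in \GL_2(O_F)$ (as arranged) and $C \in \GL_2(O_F)$, this is a bijection on isomorphism classes of modules and of $(O_B,*)$-lattices, and it is compatible with duality and direct sums. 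Second, the trace construction $(V,\tilde\psi)\mapsto (V,\tilde\psi_F)$ passes from an $F_0$-valued to an $F$-valued skew-Hermitian $(B,t)$-module, and the computation of dual lattices already recorded in the excerpt shows $\Lambda^{\vee,\psi} = \Lambda^{\vee,\tilde\psi_F}$, so self-duality is preserved. Third, Morita equivalence $(V,\tilde\psi_F) \mapsto (V_1,\phi_F)$ with $V_1 = \begin{psmallmatrix} 1 & 0 \\ 0 & 0\end{psmallmatrix}V$ identifies $F$-valued skew-Hermitian $(O_B,t)$-lattices with $O_F$-lattices in symplectic $F$-spaces, again preserving direct sums and self-dual lattices.

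Having assembled this composite equivalence, I would conclude as follows. A rank-$n$ free left $B$-module $V$ has $V_1$ an $F$-space of dimension $2n$, so under the composite equivalence, rank-$n$ $F_0$-valued skew-Hermitian $(B,*)$-modules correspond to $2n$-dimensional symplectic $F$-spaces. Since there is a unique symplectic $F$-space of each even dimension (stated in the excerpt, right after Definition~\ref{def:sym}), there is a unique rank-$n$ $F_0$-valued skew-Hermitian $(B,*)$-module up to isomorphism. For the lattice statement, the composite equivalence carries self-dual $(O_B,*)$-lattices in that module bijectively onto self-dual $O_F$-lattices in the $2n$-dimensional symplectic $F$-space, and by Lemma~\ref{symclass} (specifically the sequence $(d_1,\dots,d_n) = (0,\dots,0)$) there is exactly one such $O_F$-lattice up to isomorphism. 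Tracing back through the three equivalences gives the uniqueness of the self-dual $(O_B,*)$-lattice.

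The only genuine subtlety — and thus the step I would be most careful about — is verifying that each of the three functors really does send self-dual lattices to self-dual lattices and is essentially surjective, not merely fully faithful. For the twisting step one must check that the similitude $\gamma C$ has unit determinant so that $\Lambda^{\vee,\psi}$ and $\Lambda^{\vee,\tilde\psi}$ literally coincide as $O_B$-submodules (this uses $\gamma, C \in \GL_2(O_F)$ and $C\bar C = 1$); for the trace step the identity $\Lambda^{\vee,\psi} = \Lambda^{\vee,\tilde\psi_F}$ is the displayed computation in the excerpt, keyed to the choice of $\delta$ with $\mathfrak D_{F/F_0}^{-1} = \delta^{-1}O_F$; and for the Morita step one needs that the idempotent $\begin{psmallmatrix} 1 & 0 \\ 0 & 0 \end{psmallmatrix}$ intertwines the $O_B$-lattice condition on $\Lambda$ with the $O_F$-lattice condition on $\Lambda_1$ and that the induced pairing $\phi_F$ is non-degenerate and alternating. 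All three of these are essentially bookkeeping with the explicit formulas already displayed, so the proof is short; the remaining content is just citing the uniqueness of the symplectic $F$-space and of its self-dual lattice.
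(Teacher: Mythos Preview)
Your proposal is correct and follows essentially the same approach as the paper: the proof is precisely the chain of three equivalences (twist by $\gamma C$, lift via trace to an $F$-valued form, then Morita reduction to a symplectic $F$-space) set up in the discussion immediately preceding the proposition, followed by invoking the uniqueness of the $2n$-dimensional symplectic $F$-space and its self-dual $O_F$-lattice from Lemma~\ref{symclass}. The subtleties you flag---that $\gamma C \in \GL_2(O_F)$ so duals match under the twist, that the displayed computation gives $\Lambda^{\vee,\psi}=\Lambda^{\vee,\tilde\psi_F}$, and that Morita preserves self-duality---are exactly the checks the paper records in that passage.
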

By  Morita equivalence, there are isomorphisms of $F_0$-groups 
\begin{align}\label{eq:morita}
\GU_{F_0}(V, \psi) \simeq \GSp_{F_0}(V_1, \phi_F) \quad \text{and} \quad {\rm U}_{F_0}(V, \psi) \simeq \Sp_{F_0}(V_1, \phi_F). 
\end{align}
  In particular, we have ${\rm U}_{F_0}(V, \psi)(F_0) \simeq \Sp_{2n}(F)$ where $n=\rank_{B}V$. 

\subsubsection*{The non-split case} 
We next assume that $B$ is a division algebra.  Let $(\Lambda, \psi)$ be an $F_0$-valued skew-Hermitian 
  $(O_B, *)$-lattice, in $V=B \Lambda$.  Put
%\begin{equation}
 % \label{eq:varphi}
  $\varphi(x,y)
  \coloneqq
  \psi(x,\gamma y)$,  where  $\gamma\in B^\times$ is  
 defined as in 
  (\ref{eq:LC.30}). 
%\end{equation}
It follows from 
  \eqref{eq:LC.30} and \eqref{eq:e-h}   that 
\begin{equation*}
 % \label{eq:varphi_r}
  \varphi(y,x)=\varphi(x,y)\quad \text{and} \quad
\varphi(bx,y)=\varphi(x, \bar b y) \quad \text{for all} \quad b\in B, \ x,y\in V.
\end{equation*}
%In other words, $(V,\varphi_B)$ is an $F_0$-valued 
%$(-\epsilon,\bar{\cdot})$-Hermitian $B$-module. 
Let 
    $\varphi_B :V\times V\to B$ 
be the unique Hermitian $(B, \bar{\cdot})$-form   such that 
\[\varphi(x,y)=\Tr_{B/F_0} (\delta^{-1}\varphi_B(x,y)), \quad x, y \in V.\] 
Let    $(\Lambda, \varphi_B)$ be   the restriction. 
The assignment $(\Lambda, \psi) \mapsto (\Lambda, \varphi_B)$ gives an equivalence of categories  between the category  of $F_0$-valued skew-Hermitian $(O_B, *)$-lattices and that of Hermitian $(O_B, \bar{\cdot})$-lattices. 
%It preserves direct sums.  

%Following the usual
%convention, $\varphi_B_B$ is assumed to be $B$-linear in its first variable.  
%Recall that we define an element $\delta \in F$ by $\mathfrak D^{-1}_{F/F_0}=\delta^{-1}O_F$. 
The inverse  different  $\mathfrak D_{B/F}^{-1} \coloneqq 
\{ x \in B \mid  \Tr_{B/F} (xO_B)
\subset 
O_F\}$ 
is generated by $\Pi^{-1}$ as a fractional ideal.  
It follows that  for any  element  $x\in V$   
\begin{equation*}
 % \label{eq:LC.12x}
  \psi(x,\Lambda)\subset O_{F_0}\iff \varphi(x,\gamma^{-1} \Lambda)\subset
  O_{F_0}\iff \bar{\gamma}^{-1}\varphi_B(x,\Lambda)\subset  \Pi^{-1}O_B. 
\end{equation*}
%\begin{align} 
%\label{eq:a}\Pi^r O_B=\bar{\gamma}\delta^{-1}\mathfrak D_{B/F}^{-1}
% =\bar{\gamma}\delta^{-1} \Pi^{-1}
%   O_B.
 %  \end{align}
%This  is a principal two-sided
  % fractional $O_B$-ideal. 
   %   Let $\Lambda^{\vee, \psi}$ (resp.  $\Lambda^{\vee, \varphi_B_B}$) be the dual lattice of $\Lambda$ with respect to $\psi$ (resp. $\varphi_B_B$). 
This implies that
\begin{equation}
  \label{eq:LC.15}
     \Lambda^{\vee, \psi}=\{x\in V \mid \varphi_B(x,\Lambda)\subset \bar{\gamma} \Pi^{-1}O_B \}= \bar{\gamma}\Pi^{-1} \Lambda^{\vee, \varphi_B}
     =\Pi^{\ord(\gamma)-1}
     \Lambda^{\vee, \varphi_B}, 
\end{equation}
where  $\ord : B^{\times} \to \Z$ denotes the valuation on $B^{\times}$ 
normalized by $\ord(\Pi)=1$. 
Hence, for any integer $i$,  we have that $\Lambda=\Pi^i\Lambda^{\vee, \psi}$ if and only if $\Lambda=\Pi^{i+\ord(\gamma)-1}\Lambda^{\vee, \varphi_B}$. 
%By \eqref{eq:LC.15}, a lattice $(\Lambda, \psi)$ is  self-dual if and only if $\Lambda=\Pi^{\ord(\gamma)-1}  \Lambda^{\vee, \varphi_B}$.  

This argument  and  Lemma \ref{quatclass} imply   the following.

\begin{prop}\label{prop:herm-self-dual}
Suppose that $B$ is a division quaternion algebra. 
Then, for each $n \geq 1$, there exists a unique $F_0$-valued skew-Hermitian $(B, *)$-module $(V, \psi)$ of rank $n$ up to isomorphism. 
Further, for any integer $i$, 
there  exists an  
  $(O_B, *)$-lattice of rank $n$ with $\Lambda=\Pi^{i}\Lambda^{\vee,\psi}$ if and only 
  if either 
  $n$ is even or  $i+\ord(\gamma)$ is odd.   
  If this condition holds, then such a  lattice is unique up to isomorphism. 
\end{prop}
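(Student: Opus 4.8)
The plan is to reduce the classification of $F_0$-valued skew-Hermitian $(B,*)$-modules and lattices over the division quaternion algebra to the classification of Hermitian $(B,\bar{\cdot})$-modules and lattices, which is already recorded in Lemma \ref{quatclass}. The bridge is precisely the equivalence of categories $(\Lambda,\psi)\mapsto(\Lambda,\varphi_B)$ constructed in the paragraph preceding the proposition, together with the identity \eqref{eq:LC.15} relating the two notions of dual lattice, namely $\Lambda^{\vee,\psi}=\Pi^{\ord(\gamma)-1}\Lambda^{\vee,\varphi_B}$.

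First I would establish uniqueness of the skew-Hermitian $(B,*)$-module $(V,\psi)$ of rank $n$. Since $\varphi(x,y)=\psi(x,\gamma y)$ and $\gamma\in B^\times$, and since $\varphi=\Tr_{B/F_0}(\delta^{-1}\varphi_B(\cdot,\cdot))$ with $\Tr_{B/F_0}=\Tr_{F/F_0}\circ\Tr_{B/F}$ a perfect trace form, the assignment $(V,\psi)\mapsto(V,\varphi_B)$ is a bijection on isomorphism classes: an $F_0$-bilinear $B$-semilinear isometry for $\psi$ is the same data as a $B$-sesquilinear isometry for $\varphi_B$ (one only has to check that the conditions in \eqref{eq:e-h} translate into the Hermitian symmetry conditions under $\gamma$, which is exactly the displayed computation $\varphi(y,x)=\varphi(x,y)$, $\varphi(bx,y)=\varphi(x,\bar b y)$ right before the statement). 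Then uniqueness of $(V,\psi)$ follows from the cited uniqueness of the rank-$n$ Hermitian $(B,\bar{\cdot})$-module \cite[Theorem 3.1]{Jacobowitz-HermForm}.

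Next I would treat the lattice statement. Fix an integer $i$. By \eqref{eq:LC.15}, for an $(O_B,*)$-lattice $\Lambda$ one has $\Lambda=\Pi^i\Lambda^{\vee,\psi}$ if and only if $\Lambda=\Pi^{i+\ord(\gamma)-1}\Lambda^{\vee,\varphi_B}$, as already noted in the excerpt. Now apply Lemma \ref{quatclass}(1) with the integer $j\coloneqq i+\ord(\gamma)-1$ in place of $i$: a Hermitian $(O_B,\bar{\cdot})$-lattice of rank $n$ with $\Lambda=\Pi^{j}\Lambda^{\vee,\varphi_B}$ exists if and only if $n$ or $j$ is even, i.e.\ if and only if $n$ is even or $i+\ord(\gamma)-1$ is even, i.e.\ if and only if $n$ is even or $i+\ord(\gamma)$ is odd. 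This is exactly condition stated in the proposition. For uniqueness up to isomorphism when the condition holds, invoke Lemma \ref{quatclass}(2) and transport the resulting unique isomorphism class back through the equivalence of categories.

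I do not expect a serious obstacle here; the proposition is essentially a dictionary statement. The one point requiring a little care is that the equivalence $(\Lambda,\psi)\mapsto(\Lambda,\varphi_B)$ genuinely respects isomorphisms in both directions (so that uniqueness transports), which follows because $\gamma$ and $\delta$ are fixed and the trace pairing $\Tr_{B/F_0}$ is non-degenerate, making $\varphi_B$ uniquely determined by $\varphi$ and conversely; I would state this explicitly rather than leave it implicit. A second minor point is bookkeeping: one must make sure the shift by $\ord(\gamma)-1$ is applied in the correct direction, which is pinned down by \eqref{eq:LC.15}. With those observations in place the proof is two or three lines citing Lemma \ref{quatclass} and \cite[Theorem 3.1]{Jacobowitz-HermForm}.
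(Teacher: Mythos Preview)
Your proposal is correct and follows essentially the same route as the paper: the proposition is deduced directly from the equivalence of categories $(\Lambda,\psi)\mapsto(\Lambda,\varphi_B)$, the dual-lattice identity \eqref{eq:LC.15}, and Lemma~\ref{quatclass}, with uniqueness of the underlying module coming from \cite[Theorem~3.1]{Jacobowitz-HermForm}. The paper presents exactly this argument in the paragraphs immediately preceding the proposition and then simply states that ``this argument and Lemma~\ref{quatclass} imply'' the result.
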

Since   $\gamma$ commutes with elements of $\GU_{F_0}(V, \psi)$,  
 we have isomorphisms of $F_0$-groups 
\begin{equation}\label{eq:gu}
  \GU_{F_0}(V, \psi) = \GU_{F_0}(V, \varphi_B) \quad \text{and} \quad  {\rm U}_{F_0}(V, \psi) = {\rm U}_{F_0}(V, \varphi_B). 
\end{equation}
\begin{remark}
    The first half of each of Propositions \ref{sd-split} and \ref{prop:herm-self-dual} is a special case of \cite[Propositions~2.1 and 3.3]{Shimura1963-AltHermForms}.
\end{remark}
 % By Lemmas and \ref{c}, we have the following: 
%  \begin{lemma}
  %    Let $\Lambda$ be an  $F_0$-valued skew-Hermitian 
%  $(O_B, *)$-lattice in $V=B\Lambda$, and 
% write   
 % $\Stab \Lambda$ for its stabilizer in  $\GU_{F_0}(V, \psi)(F_0)$. Then the homomorphism  
 % $c : \GU_{F_0}(V, \psi)(F_0) \to F_0^{\times}$ maps $\Stab \Lambda$ onto $O_{F_0}^{\times}$.
 % \end{lemma}
 % \begin{lemma}\label{LC.5}
 % For any integer $n\ge 1$,  there is a unique $F_0$-valued skew-Hermitian
 % $(B, *)$-module $(V,\psi)$ of rank $n$ up to isomorphism.
%\end{lemma}
% \begin{proof}
%   This is a well-known result and follows immediately from Kneser's Theorem
%   on the vanishing of the Galois cohomology $H^1(F,G)$ 
%   for any semi-simple simply connected
%   group $G$. Alternatively, we can also reduce  
%   to classify either $(+,\text{-})$-Hermitian
%   $B$-modules $(V,\varphi_B_B)$ or $F_0$-valued alternating
%   $K$-modules $(V_1,\psi_C)$ as above, according as $B$ 
%   is a division algebra or
%   not. The latter classifications are elementary and there
%   is only one isomorphism class in each case.
% \end{proof}

  %\begin{proof}
  %  This is  
   % Alternatively, the following argument  combining with the fact that 

   %immediately from Kneser's Theorem
%  on the vanishing of the Galois cohomology $H^1(F_0,G)$ 
%  for any semi-simple simply connected
%  group $G/F_0$ ? ${\rm U}_{F_0}(V, \psi)$?.
 % \end{proof}
\section{Moduli spaces and the superspecial locus} \label{ssnempt}
% Non-emptiness of the superspecial loci of quaternionic Shimura varieties

%\subsection{Skew-Hermitian forms over totally indefinite quaternion algebras}
\subsection{Integral PEL datum of type C}\label{ss:PEL}
Let $F$ be a totally real number field of degree $d$ with ring of integers $O_F$. 
Let $B$ be a  quaternion $F$-algebra which is totally indefinite (i.e.~$B\otimes_F \R \simeq {\Mat}_2(\R)$  for any real embedding $F \hookrightarrow \R$). 
Let $b\mapsto \bar b \coloneqq {\rm Tr}_{B/F}(b)-b$ denote the canonical  involution of $B$. 
We assume that $B$ is equipped with a positive involution $*$, that is, an involution such that ${\rm Tr}_{B/\Q}(b b^*)>0$ for any $x \in B-\{0\}$. 
Then, 
as in \cite[Section 21]{mumford:av},  there is an element $\gamma\in B^\times$ such that $\gamma^2\in F$  is  totally negative in $F$ and 
\begin{equation}
\label{eq:gamma}
    \bar \gamma+\gamma=0 \quad \text{and} \quad b^*= \gamma \bar b
    \gamma^{-1} \quad  {\text{for all}} \quad b\in B.
\end{equation}
Moreover we can choose an isomorphism $B  \otimes_{\Q} \R \simeq \Mat_2(\R)^d$ 
carrying the involution $*$ 
into the involution $(X_1, \ldots, X_{d}) \mapsto (X_1^t, \ldots, X_d^t)$. 
Let $O_B$ be a maximal $O_F$-order in $B$ which is stable under $*$. 
 
A $\Q$-valued skew-Hermitian $(B, *)$-module $(V, \psi)$ is   defined in the same way as in Definition \ref{def:skew}. 
For each positive integer $m$, 
there exists a unique $\Q$-valued skew-Hermitian $(B, *)$-module of rank $m$ up to isomorphism, by Propositions~\ref{sd-split} and \ref{prop:herm-self-dual}. 
An $O_B$-lattice $\Lambda$ in $V$ is said to be {\it self-dual} (with respect to $\psi$) if $\Lambda= \Lambda^{\vee, \psi} \coloneqq \{ x \in V \mid  \psi(x, \Lambda) \subset \Z\}$. 

For any commutative $\Q$-algebra $R$, 
we write $V_R  \coloneqq  V\otimes_\Q R$, and write $\End_{B\otimes_\Q R}(V_R)$ for the ring of $B\otimes_\Q R$-linear endomorphisms of $V_R$.
We define a $\Q$-group $\bfG=\GU_{\Q}(V, \psi)$ by 
%be the $\Q$-group of $B$-linear 
%$\psi$-similitudes on $V$, defined by 
\begin{align}
  \label{eq:defG}
  \bfG(R) =\{g\in \End_{B \otimes_{\Q}R} (V_R)  \mid  \exists\, c(g)\in R^\times
  \ \text{s.t.}\ \psi(gx,gy)=c(g)\psi(x,y), \ \forall\, x,y\in
  V_R \}. 
\end{align}
The group $\bfG$ is connected and reductive. 
Further it 
 satisfies the Hasse principle, that is, the local-to-global map $H^1(\Q, \bfG)\to \prod_{v \le \infty} H^1(\Q_v, \bfG)$ is injective (\cite[Section 7]{Kottwitz}). 

%The similitude factors define a character $c : \bfG \to \Gm$. 
%We put   $\bfG^1 \coloneqq \ker c$. 
We define a $\Q$-group $\bfG^1={\rm U}_{\Q}(V, \psi)$ by  the exact sequence 
\[1 \longrightarrow \bfG^1  \longrightarrow \bfG \xlongrightarrow{c} \bbG_{{\rm m}, \Q} \longrightarrow 1,\]
where $c$ denotes the similitude character.   
 This group $\bfG^1$ is  semi-simple and simply-connected. 
 From Kneser's theorem \cite[Theorem 6.4, p.~284]{PR} it follows that   $H^1(\Q_{\ell}, {\bf{G}}^1)=1$ for any prime $\ell$.  
 This fact and the  above exact sequence imply that 
\begin{equation}\label{eq:sim}
c(\bfG(\Q_{\ell}))=\Q_{\ell}^{\times}.
\end{equation}

%\begin{prop}\label{prop:sd}
%Let $(B, *, O_B)$ be as above, $m$ be a positive integer, and  $(V, \psi)$ be the  unique  $\Q$-valued skew-Hermitian $(B, *)$-module of rank $m$. %and $\gamma\in B^\times$ be  as above. 
% there exists a self-dual $O_B$-lattice $\Lambda$ in $(V, \psi)$  if and only if $m$ is even, or for any finite place $v$ of $F$ ramified in $B$ one has $\ord_{\Pi_v}(\gamma)$ is odd. 
  % Under these conditions,  such a lattice $\Lambda$ is unique up to isomorphism.  
%end{prop}
 
\begin{defn}\label{def:PELC}
An \emph{integral PEL datum of type C} is a septuple 
$\mathscr D=(B,*,O_B,V,\psi, \Lambda, h_0)$  where 
\begin{itemize}
    \item[(i)] $(B,*,O_B)$ is as above;
    \item[(ii)]$(V,\psi)$ is a $\Q$-valued skew-Hermitian $(B, *)$-module; 
    \item[(iii)] $\Lambda$ is an  $O_B$-lattice in $V$; 
    \item[(iv)] $h_0:\C \to \End_{B\otimes_\Q \R}(V_\R)$ is an 
%$B\otimes_\Q \R$-linear
$\R$-algebra 
  homomorphism % where $V_\R\coloneqqV\otimes_\Q \R$,  
  such that 
\[ \psi(h_0(i)x, h_0(i)y)=\psi(x,y) \quad {\text{for all}}  \quad  x,y \in
 V_\R, \]
% \coloneqq :=
%for any $x, y\in V_\R=V\otimes\R$, one has 
%  $\phi_F(h_0(i)x, h_0(i)y)=\psi(x,y)$ and 
  and that the symmetric form  $(x,y) \coloneqq \psi(h_0(i)x,y)$ is
  positive definite  on $V_\R$. 
\end{itemize}
A datum $\mathscr D$  is said to be \emph{principal}  
if $\Lambda$ is self-dual with respect to $\psi$. 
\end{defn} 
For a $\Q$-valued skew-Hermitian $(B, *)$-module $(V,\psi)$,  a  map  $h_0$ as in (iv) always exists and the group $\bfG^1(\R)$ acts transitively on the set of all such maps \cite[Lemma 4.3]{Kottwitz}.
The map  $h_0$  endows $V_{\R}$ with a complex structure,  and hence it gives a
decomposition $V_{\C}=V^{-1,0} \oplus V^{0,-1}$ of complex subspaces. 
Here, $V^{-1,0}$ (resp.~$V^{0,-1}$) denotes the subspace where $h_0(z)$ acts by $z$ (resp.~$\bar z$). 

 Let $\char_F(b) \in O_F[T]$ be the reduced characteristic polynomial of $b \in O_B$, and let $\char(b)\coloneqq {\rm Nr}_{F/\Q}\char(b) \in \Z[T]$ be the one 
 from $B$ to $\Q$.
As in \cite[Section 2.3]{yu:D}, the characteristic polynomial of $b\in O_B$  on $V^{-1,0}$ is given by 
\begin{align}\label{eq:char}
    \char(b \mid V^{-1,0})=\char(b)^m \in \Z[T]. 
\end{align}

For an abelian scheme $A$ over a base scheme $S$, let $\End_{S}(A)$ denote the ring of $S$-linear endomorphisms of $A$. 
\begin{defn}\label{def:ab}
 Let $(B,*,O_B)$ be as above. 

 (1) 
An \emph{$O_B$-abelian scheme} over a base scheme $S$ is a pair $(A, \iota)$, where $A$ is an abelian scheme over $S$ and $\iota$  is a monomorphism of rings $\iota:O_B\to \End_S(A)$. 

(2) 
A \emph{(principally) polarized $O_B$-abelian scheme} is a triple $(A,\lambda, \iota)$, where $(A, \iota)$ is an $O_B$-abelian scheme and  $\lambda : A \to A^{t}$ is a (principal) polarization such that $\lambda\circ \iota(b^*) =\iota(b)^t \circ \lambda$. 

(3) The {\it determinant condition} for an $O_B$-abelian scheme $(A, \iota)$ over a $\Z_{(p)}$-scheme is the
equality of characteristic polynomials of degree $2dm$: 
\begin{equation}\label{eq:detA}
\char(\iota(b) \mid \Lie(A))
=\char(b \mid  V^{-1,0}) \in O_S[T] \quad \text{for all} \quad b \in  O_B. 
\end{equation}
\end{defn}
Note that condition \eqref{eq:detA} implies  the $S$-scheme $A$ has relative dimension  $2dm$.

%Note that a triple $(B,*,O_B)$ extends to a principal integral PEL datum of type $C$ of rank $m$ if and only is there exists a self-dual $O_B$-lattice in a skew-Hermitian $B$-module $(V, \psi)$ of rank $m$. 

%\begin{lemma}\label{lem:complex}
%Let $m$ be a positive integer. There exists a  principally polarized $O_B$-abelian variety $(A,\lambda,\iota)$ over $\C$ of dimension $2dm$ if and only if the triple $(B, *, O_B)$ extends to a principal integral PEL datum $\mathscr D=(B,*,O_B,V,\psi, \Lambda, h_0)$ of type C of rank $m$. 
% $\dim_\Q V=4dm$.
%\red{(We should restrict only to principal one, otherwise both statements are automatically true.)}
%\end{lemma}

%This theorem and the above argument imply  Theorem~\ref{main.1}.

\subsection{\dieu modules}
\label{ss:dieu}
Let $k$ be an \ac field of characteristic $p$.
Let $W(k)$ be the ring of Witt vectors over $k$ with the absolute Frobenius morphism  $\sigma :W(k) \to W(k)$. 
  Let $W(k)[{\sf F, V}]$ be the quotient ring of the associative free $W(k)$-algebra generated by the indeterminates ${\sf F, V}$ with respect to the relations 
 \begin{align*}
 {\sf {FV}=VF}=p, \quad {\sf F}a=a^{\sigma}{\sf F}, \quad {\sf V}a^{\sigma}=a{\sf V} \quad  {\text{for all}} \quad a \in W(k).
 \end{align*}
 \begin{defn}\label{def:dmod}

 (1) 
 A \emph{Dieudonn\'e module} $M$ over $k$ is a left $W(k)[{\sfF,\sfV}]$-module which is
 finitely generated and free as a $W(k)$-module. 
 
  (2) 
A \emph{polarization} on a \dieu module $M$ is an alternating form 
 $\<\,,\,\> :  M \times M \to W(k)$
  such that  
  \[\<{\sf F}x,y\>=\<x, {\sf V}y\>^{\sigma} \quad \text{for all} \quad x, y \in M.\] 
 A polarization  $\<\,,\,\>$ is called a \emph{principal polarization} 
 if it is a perfect pairing. 

 (3) 
Let $O$ be a $\Zp$-algebra with an involution $*$. 
An {$O$-\dieu module} $M$ over $k$ is a 
 \dieu module over $k$ endowed with an  $O$-action  commuting  with the operators $\sf F$ and $\sf V$. 
 An $O$-\dieu module $M$ over $k$  is called {\it (principally)  polarized} if it is  
 endowed with a (principal) polarization $\<\, , \, \>$  satisfying   
 $\<bx, y\>=\<x, b^*y\>$ for all $x, y \in M$ and $b\in O$. 
   \end{defn}
   Let $(B,*,O_B)$ be as in Section \ref{ss:PEL}. 
\begin{defn}\label{def:detM}
 An \dieu $O_B\otimes \Zp$-module $M$ of $W(k)$-rank $4dm$ is said to satisfy {\it determinant condition} if the following equality of  polynomials holds:
\begin{equation}\label{eq:detM}
 \char  (b \mid M/{\sf V}M)=\char(b)^m \pmod p \in k[T] \quad \text{for all} \quad  b\in O_B, 
 \end{equation}
 where $\char(b)\in \Z[T]$ is the reduced characteristic polynomial of $b$ from $B$ to $\Q$, cf.~\eqref{eq:char}.  
\end{defn}  
   For an abelian variety $A$ over $k$, let  $A[p^{\infty}]$ be its $p$-divisible group over $k$ and   $M(A)$ the  covariant \dieu module of $A[p^{\infty}]$; see \cite{zink:cartier} for the covariant \dieu theory. 
  %defined by  $M(A) \coloneqq \Hom_{W(k)}(\D(A[p^{\infty}]),\D(\mu_{p^{\infty}}))$ 
  As $k$ is a perfect field, one may identify $M(A)$ with the dual \dieu module $\Hom_{W(k)}(\D(A[p^{\infty}]), \D(\mu_{p^{\infty}}))$,  where  $\D$ denotes the  contravariant Dieudonn\'e functor. 
  A (polarized) $O_B$-abelian variety $A$ over $k$ induces a (polarized) $O_B \otimes \Z_p$-Dieudonn\'e module $M(A)$  over $k$. 
 There is a natural isomorphism $\Lie(A) \simeq M(A)/{\sf V}M(A)$ of $O_B \otimes k$-modules. 
 Therefore,  
 an $O_B$-abelian variety $A$ over $k$ satisfies the determinant condition \eqref{eq:detA} if and only if the associated \dieu module $M=M(A)$ satisfies the determinant condition \eqref{eq:detM}. 

Let $v$ be a  finite place $v$ of $F$. 
Let $F_v$ be the completion of $F$ at
 $v$,  $O_v=O_{F_v}$ the ring of integers, and $\pi_v$ a uniformaizer of $F_v$. 
 Let $e_v$ and $f_v$ denote the ramification index and the inertial degree of $v$, respectively. 
Write 
\begin{equation} \label{eq:Fp}
F \otimes_{\Q}{\Qp}=\prod_{v|p} F_v, \quad O_F\otimes_{\Z} {\Zp} = \prod_{v|p} O_{v}
\end{equation}
as a product of local fields and their ring of integers, respectively. 
Similarly,  let $B_v=B \otimes _F F_v$ and $O_{B_v}= O_{B} \otimes_{O_F}O_{F_v}$. 
Then 
we have 
\begin{equation}\label{eq:dec}
    B\otimes_{\Q} \Qp=\prod_{v|p} B_v, \quad O_B\otimes_{\Z} {\Zp} = \prod_{v|p} O_{B_v}. 
\end{equation} 
A (polarized) \dieu $O_B\otimes \Zp$-module $M$ has the decomposition with respect to \eqref{eq:Fp}:
\begin{equation}\label{eq:M=Mv}
  M=\bigoplus_{v|p} M_v
\end{equation}
and each $M_v$ is a (polarized) $O_{B_v}$-\dieu module. 
Suppose  $\rank_{W(k)}M=4dm$. 
In this case, $M$ satisfies the determinant condition  \eqref{eq:detM} if and only if for each  $v \mid p$ the following equality of polynomials of degree $2m[F_v:\Q_p]$ holds:  
\begin{align}\label{eq:detM_v}
  \char  (b \mid M_v/{\sf V}M_v)=\char(b)^m \pmod p \in k[T] \quad \text{for all} \quad  b\in O_{B_v}.  
\end{align}
Here, $\char(b)$ is the reduced characteristic polynomial of $b$ from $B_v$ to $\Q_p$,  defined in the same way as in Definition \ref{def:detM}.

If $v$ is unramified in $B$, we choose an identification $B_v = \Mat_2(F_v)$ such that $O_{B_v}=\Mat_2(O_v)$. 
Now suppose that 
 $v$ is ramified in $B$. Then $B_v$ is the division quaternion $F_v$-algebra and $O_{B_v}$ is the unique
maximal order of $B_v$. 
Let $\Pi_v$ be a uniformizer of $B_v$, and  $b \mapsto \bar b$ the canonical involution on $B_v$. 
Let $F_v'$ be the unramified quadratic extension of $F_v$ with ring of integers $O_v'$. The non-trival automorphism of $F_v'/F_v$ is also denoted by $a\mapsto \bar a$. 
We choose a presentation
\begin{equation}\label{eq:OBv}
    O_{B_v}=O_v'[\Pi_v]
\end{equation}
subject to the following relations
\begin{equation}\label{eq:Pi}
 \ol \Pi_v=-\Pi_v, \quad \Pi_v \ol \Pi_v=\pi_v, \quad \Pi_v a=\bar a \Pi_v\quad \forall\, a\in O_v',
 \end{equation}
for which the canonical involution of $B_v$ leaves $F'_v$ stable and induces the involution $\bar{\cdot}$ on $F_v'/F_v$. 
We also have  
\begin{equation}
    \label{eq:OBv2}
   O_{B_v} =\left \{\begin{pmatrix}
    a & -b \\
    \pi_v b & \bar a
\end{pmatrix} \mid  a, b\in O_v' \right \}, \ \Pi_v=\begin{pmatrix}
    0 & -1 \\
    \pi_v  & 0
\end{pmatrix}, \  \text{and} \ O_{B_v} \otimes_{O_v} O_v'=\begin{pmatrix}
    O_v'       &   O_v' \\
    \pi_v O_v' &   O_v'
\end{pmatrix}. 
\end{equation}
Let $F_v^{\rm ur}$ be the maximal unramified subextension of $F_v/\Qp$ and $O_v^{\rm ur} $ its ring of integers. Write the set of embeddings $\Hom_{\Zp}(O_v^{\rm ur}, W(k))=\{\sigma_i\}_{i\in \Z/f_v\Z} $ such that $\sigma \circ \sigma_i=\sigma_{i+1}$. Then we have decompositions
\begin{equation}\label{eq:Bv}
    O_v\otimes_{\Zp} W(k)=\prod_{i\in \Z/f_v\Z} \breve O_{v}^i, \quad 
    O_{B_v}\otimes_{\Zp} W(k)=\prod_{i\in \Z/f_v\Z} \breve O_{{B_v}}^i.
\end{equation}
If $v$ is unramified in $B$, then $\breve O_{B_v}^i= \Mat_2(\breve O_v^i)$. 
If $v$ is ramified in $B$, then using \eqref{eq:OBv2} we have 
\begin{equation}
    \label{eq:OBvi}
    \breve O_{B_v}^i=O_{B_v}\otimes_{O_v} O_v' \otimes_{O_v'} \breve O_{Fv}^i = \begin{pmatrix}
    \breve O_{v}^i &  \breve O_{v}^i \\
    \pi_v \breve O_{v}^i & \breve O_{v}^i 
\end{pmatrix}.
\end{equation}
With respect to the decomposition \eqref{eq:Bv}, we have 
\begin{equation}\label{eq:Mv}
   M_v=\bigoplus_{i\in \Z/f_v\Z} M_v^i, 
\end{equation}
where $M_v^i$ is the $\sigma_i$-component of $M_v$ and it is a $W(k)$-valued  (skew-Hermitian) $\breve O_{{B_v}}^i$-module.

\subsection{Proof of Theorem~\ref{main.1}}
%\begin{proof}
%[\bf Proof of Theorem~\ref{main.1}]
%The equivalence of (a) and (b) is shown by Lemma~\ref{lem:complex} and that of (b) and (c) is shown by Proposition~\ref{prop:sd}.
(a)$\implies$(b). Let $(A,\lambda,\iota)$ be a  principally polarized $O_B$-abelian variety of dimension $2dm$ over $\C$. 
Then the pair $(V,\psi)\coloneqq (H_1(A(\C),\Q), \<\, ,\,\>_\lambda)$ is a $\Q$-valued skew-Hermitian $(B, *)$-module, where $\<\, ,\,\>_\lambda$ is the alternating pairing induced by  $\lambda$.   
We have that $\dim_\Q V=2\dim A=4dm$. 
Further, the group $\Lambda\coloneqq H_1(A(\C),\Z)$ is an $O_B$-lattice in $(V, \psi)$.
Moreover, the natural identification $\Lie(A)=V_\R$ gives rise to complex structure $J$ on $V_\R$. 
Finally, if we let  $h_0$ be  the unique $\R$-algebra homomorphism $\C \to \End_{B \otimes _{\Q}\R}(V_\R)$  sending $a+bi$ to $a I_{V_\R}+bJ$,  
then it satisfies condition (iv) in  Definition~\ref{def:PELC} by the Riemannian condition (cf.~\cite[Th\'{e}or\`{e}me 4.7]{deligne:travaux}).

(b)$\implies$(a). 
Let $\Lambda$ be a self-dual $O_B$-lattice in $(V, \psi)$. 
There always exists a map $h_0$ as in Definition \ref{def:PELC} (iv), and we obtain a principal integral PEL datum $\mathscr D$. 
It gives rise to an abelian variety   $A(\C)=(V_\R, h_0(i))/\Lambda$ with the induced additional structures, where $(V_\R, h_0(i))$ is the complex vector space $V_\R$ with complex structure $h_0(i)$.

(b)$\iff$(c).
 The assertion can be reduced to the local one, which follows from Propositions~\ref{sd-split} and \ref{prop:herm-self-dual}. 
 We prove that a self-dual $O_B$-lattice $\Lambda$ is unique if it exists. 
  Suppose that $\Lambda'$ is another self-dual lattice in $(V, \psi)$. 
   By Propositions~\ref{sd-split} and \ref{prop:herm-self-dual}, 
   the completions $\Lambda_v$ and $\Lambda'_v$ at every finite place $v$ are isomorphic. 
 Hence two lattices $\Lambda$ and $\Lambda'$ lie in the same genus.  
  The isomorphism classes of lattices in the genus are classified by the double coset space ${\rm DS}(\bfG^1,U^1)=\bfG^1(\Q)\backslash \bfG^1(\A_f)/U^1$, where %$\bfG^1={\rm U}_B(V,\psi)$ and 
  $U^1$ is the stabilizer in $\bfG^1(\A_f)$ of the $O_B\otimes \wh \Z$-lattice $\Lambda \otimes \wh \Z$.  
  Since  $\bfG^1(\R)$ is non-compact, the strong approximation theorem implies that the space ${\rm DS}(\bfG^1,U^1)$ is a singleton.

(b)$\implies$(d). Let $\mathscr D$ be a principal integral PEL-datum of type C of rank $m$, and let $\bfG=\GU_B(V,\psi)$, and $X$ be the $\bfG(\R)$-conjugacy class of $h_0$. Choose a special pair $i: (T,h_T)\embed (\bfG,X)$ of the Shimura datum $(\bfG,X)$, where $T$ is a maximal torus of $\bfG$ defined over $\Q$ and $h_T:\bbS \to T_\R$ is a homomorphism of $\R$-groups such that $i(h_T)\in X$. Such a special pair always exists; see \cite[Section 5.1]{deligne:travaux}. 
Moreover, since $(G,X)$ is a PEL-type Shimura datum, $(T,h_T)$ is a CM pair \cite[Section A.3]{milne:1988invent} (also cf.~\cite[pp.~325--326]{milne:michigan}), namely, the cocharacter $\mu_T=h_{T,\C}(z,1)$ satisfies the Serre condition, or equivalently, the image in $\bfG(\Q)\backslash X\times \bfG(\A_f)/{\rm Stab}_{\bfG(\A_f)} (\Lambda\otimes \wh \Z)$, correspoinds to a complex principally polarized $O_B$-abelian variety $(A,\lambda, \iota)$ of dimension $2dm$ in which $A$ is a CM abelian variety. By CM theory, $(A,\lambda, \iota)$ is defined over $\Qbar$ and it has good reduction everywhere. Reduction modulo $p$ of $(A,\lambda,\iota)$ gives a desired abelian variety over $\Fpbar$, as a specialization of an $O_B$-abelian variety of characteristic zero satisfies the determinant condition.

%Choose a maximal CM subfield $E$ of $\End_{B}(V)$ such that the adjoint $\dagger$  with respect to $\psi$ induces the canonical involution on $E$ and choose an $\R$-algebra homomorphism $h_0: \C \to E_\R$ satisfies condition (iv) of Definition. Using the argument of the proof of (b)$\implies$(a), we obtain a complex principally polarized $O_B$-abelian variety $(A,\lambda, \iota)$ of dimension $2dm$ which is a CM abelian variety. By CM theory, $(A,\lambda, \iota)$ is defined over $\Qbar$ and it has good reduction everywhere. Reduction modulo $p$ of $(A,\lambda,\iota)$ gives a desired abelian variety over $\Fpbar$.

(d)$\implies$(c). Let $(A,\lambda,\iota)$ be a principally polarized $O_B$-abelian variety over $k$ satisfying the determinant condition. It suffices to show that 
if there is a place $v$ of $F$ ramified in $B$ such that $\ord_{\Pi_v}(\gamma)$ is even, then $m$ is even. Without changing the involution $*$, we may assume that $\ord_{\Pi_v}(\gamma)=0$.
Suppose that $v \mid \ell$ for some prime $\ell\neq p$. Then the $\ell$-adic Tate module 
$T_\ell(A)$ is a $\Z_\ell$-valued self-dual skew-Hermitian $O_{B}\otimes \Z_\ell$-lattice and its $v$-component 
%$T_v$ 
is a $\Z_\ell$-valued self-dual skew-Hermitian $O_{B_v}$-lattice. By Proposition~\ref{prop:herm-self-dual}, $m$ must be even. 
Suppose now that $v \mid p$. 
Then the \dieu module $(M,\<\, ,\>)$ of $(A,\lambda,\iota)$ is a principally polarized \dieu $O_B\otimes \Zp$-module of $W(k)$-rank $4dm$ and its $v$-component $M_v$ is a principally polarized $O_{B_v}$-\dieu module. 
Since $M$ satisfies the determinant condition, so as  $M_v$. 
By \cite[Proposition~5.6(2)(a)]{yu:D} $M_v$ is a free $O_{B_v}\otimes_{\Zp} W(k)$-module of $W(k)$-rank $4m[F_v:\Qp]$. Write $M_v=\oplus_{i} M_v^i$ as in \eqref{eq:Mv} and then each 
$(M^i_v,\<\, ,\>)$ is a $W(k)$-valued self-dual skew-Hermitian free $\breve O_{B_v}^i$-lattice. 

Let $L$ be the field of fractions of the ring $W(k)$.
The reduced trace $\Tr_{B_v/\Qp} :B_v\to \Qp$ induces a map $\Tr_{B_v/\Qp} \otimes L: B_v\otimes_{\Qp} L\to L$ by $L$-linearlity. 
We have a decomposition  $B_v\otimes_{\Qp} L\simeq \prod_{i\in \Z/f_v\Z} \breve B_v^i$ as in \eqref{eq:Bv} and $\Tr_{B_v/\Qp}\otimes L =\sum_{i\in \Z/f_v\Z} \Tr_{\breve B^i_v/L}$, where $\Tr_{\breve B^i_v/L}:\breve B^i_v \to L$ denotes the reduced trace. 

Consider the component $M_v^0$ at $i=0$; it is a free $\breve O_{B_v}^0$-module of rank $m$. 
Let $\mathfrak D_{F_v/\Qp}^{-1}$ be the inverse different of $F_v/\Q_p$ and $\delta_v \in F_v$ be an element with $\mathfrak D_{F_v/\Qp}^{-1}=\delta_v^{-1}O_{F_v}$. 
Then there is a unique skew-Hermitian form 
\[ \<\,,\>_{B_v}: M_v^0\times M_v^0 \to \Pi_v^{-1} \delta_v^{-1} \breve O^0_{B_v} \] \text{such that} 
\[ \<x,y\>=\Tr_{\breve B_v^0/L} \<x,y\>_{B_v}, \forall\, x,y \in M_v^0. \]
We put 
\[ \psi_{B_v}(x,y)\coloneqq \delta_v \<x,\gamma \Pi_v y\>_{B_v}: M_v^0\times M_v^0 \to \breve O^0_{B_v}.  \]
Since $M^0_v$ is self-dual with respect to $\<\, ,\>$ and $\gamma$ is a unit in $O_{B_v}$, $M_v^0$ is self-dual with respect to $\psi$. 
Using \eqref{eq:gamma} and \eqref{eq:Pi}, one  computes
\[ (\gamma \Pi_v)^*=\gamma \ol{\gamma \Pi_v} \gamma^{-1} = \gamma \ol{\Pi}_v \ol{\gamma} \gamma^{-1}=\gamma \Pi_v. \]
So $\psi_{B_v}$ is a perfect and skew-Hermitian form on $M_v^0$ with respect to an involution $'$.
For $b\in B_v$, we have 
\begin{align*}
 \psi_{B_v}(bx,y) & =\delta_v \<bx,\gamma \Pi_v y\>_{B_v}=\delta_v \<x,\gamma \bar b \Pi_v y\>_{B_v},
 \\
 \psi_{B_v}(x,b'y) & =\delta_v \<x,\gamma \Pi_v b' y\>_{B_v}.     
\end{align*} 
So we get 
\[ b'=\Pi_v^{-1} \bar b \Pi_v. \]
Let $J(\breve O^0_{B_v})$ be the Jacobson radical of $\breve O^0_{B_v}$. 
By \eqref{eq:OBvi} we have that 
\[ J(\breve O^0_{B_v})=\begin{pmatrix}
    \pi_v \breve O_{v}^0 &   \breve O_{v}^0 \\
    \pi_v \breve O_{v}^0 & \pi_v \breve O_{v}^0 
\end{pmatrix}, \quad \text{and} \quad \breve O^0_{B_v}/J(\breve O^0_{B_v})\simeq k\times k. \]
Moreover, one easily computes that
\begin{equation}\label{eq:prime}
   \begin{pmatrix}
    a_1 & 0 \\
    0 & a_2
\end{pmatrix}'=\begin{pmatrix}
    a_1 & 0 \\
    0 & a_2
\end{pmatrix}, \quad \forall\, \begin{pmatrix}
    a_1 & 0 \\
    0 & a_2
\end{pmatrix}\in \breve O^0_{B_v}. 
\end{equation}
So the involution $'$ induces a trivial involution on $k\times k$.
Put 
\[ V \coloneqq M^0_v/J(\breve O^0_{B_v}) M_v^0, \]
and let $\ol{\psi}_{B_v}$ be the pairing on $V$ induced by $\psi_{B_v}$. Then $V$ is a free $k\times k$-module of rank $m$ and from \eqref{eq:prime} $\ol{\psi}_{B_v}:V\times V\to k\times k$ is non-degenerate and alternating. Put $e_1=(1,0)$ and $e_2=(0,1)$, the standard idempotents of $k\times k$ and set $V_i\coloneqq e_iV$ for $i=1,2$. 
Then the restriction $\ol{\psi}_{B_v}:V_1\times V_1\to k$ is a non-degenerate symplectic $k$-vector space and therefore $m$ is even. This proves Theorem~\ref{main.1}. \qed
%\end{proof}

% (if degree $4m$ such that there is an embedding $E\embed B$
\subsection{Existence of  superspecial abelian varieties with additional structures}\label{ss:spmod}
\begin{defn}
Let $M$ be a \dieu module over $k$ satisfying 
 \begin{align}\label{dimcodim}
 \dim_k M/{\sf F}M=\dim_k M/{\sf V}M=g. 
 \end{align}
Such a module $M$ is called \emph{superspecial} if it further satisfies 
\begin{align}\label{a(M)}
a(M)\coloneqq \dim_k M/({\sf F}, {\sf V})M =g.
\end{align}
%An  abelian variety $A$ is called  \emph{superspecial} if its \dieu module is superspecial. 
\end{defn}
We remark that (\ref{dimcodim}) and (\ref{a(M)}) imply  ${\sf V}^2M=pM$. 
Conversely, suppose $M$ is a finite and free $W(k)$-module together with a $\sigma^{-1}$-linear operator $\sfV:M \to M$ satisfying ${\sf V}^2M=pM$.
%The  last condition implies  that
Then we have $pM\subset {\sf V}M$ and hence  the operator ${\sf F}\coloneqq p {\sf V}^{-1}: M [1/p] \to M[1/p]$ is stable on $M$. It follows that  the $W(k)[{\sf F}, {\sf V}]$-module $M$ is a superspecial \dieu module.

Recall that an abelian variety $A$ over $k$ 
of dimension $g$ is called \emph{superspecial} (resp.~\emph{supersingular}) if it is  isomorphic (resp. isogenous) 
over $k$ to a  product $E_1 \times \cdots \times E_g$ of supersingular elliptic curves $E_1, \ldots, E_g$ over $k$. 

By a theorem of Oort 
\cite[Theorem 2]{oort:product}, an abelian variety $A$ over $k$  is superspecial if and only if its \dieu module $M(A)$ is superspecial. 
\begin{thm}\label{exist}
Let $(B, *, O_B)$ be as in Section \ref{ss:PEL} and $m$ be a positive integer. 
If the conditions in Theorem \ref{main.1} hold, then 
 there exists a $2dm$-dimensional 
principally polarized superspecial $O_B$-abelian variety $(A, \lambda, \iota)$ over $k$
which satisfies the determinant condition. 
\end{thm}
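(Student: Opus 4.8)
The strategy is to construct the desired $O_B$-abelian variety by first building its (contravariant or covariant) Dieudonn\'e module $M$ over $k$, term by term in the decomposition $M=\bigoplus_{v\mid p}M_v$ and $M_v=\bigoplus_{i\in\Z/f_v\Z}M_v^i$, and then invoke Serre--Tate / algebraization together with the uniqueness results of the earlier sections to get an honest principally polarized $O_B$-abelian variety satisfying the determinant condition. First I would reduce to a purely local (indeed $v$-by-$v$, and then $i$-by-$i$) problem: by Definition~\ref{def:detM} and equations \eqref{eq:detM_v}--\eqref{eq:Mv}, it suffices to produce, for each place $v\mid p$, a principally polarized superspecial $O_{B_v}$-Dieudonn\'e module $M_v$ of $W(k)$-rank $4m[F_v:\Q_p]$ satisfying \eqref{eq:detM_v}; self-duality will be imposed componentwise as $W(k)$-valued skew-Hermitian $\breve O_{B_v}^i$-lattices, exactly as in the argument of (d)$\implies$(c). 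The global polarization and $O_B$-action are then reassembled from the local data, using that the generic fibre has a self-dual lattice by the equivalence (b)$\iff$(c) of Theorem~\ref{main.1} to match things up away from $p$.

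\textbf{Construction of the local module.} The superspecial condition, by the remark following the definition, is equivalent to requiring $\sfV^2 M=pM$ together with $\dim_k M/\sfF M=\dim_k M/\sfV M$; so the plan is to specify a $W(k)$-lattice with an involution-compatible $O_{B_v}\otimes_{\Z_p}W(k)$-action, a skew-Hermitian perfect pairing $\langle\,,\,\rangle$, and a $\sigma^{-1}$-linear operator $\sfV$ with $\sfV^2=p$ (on the nose, up to a unit), arranged so that $\sfV$ is $O_{B_v}$-linear, $\langle \sfF x,y\rangle=\langle x,\sfV y\rangle^\sigma$, and the tangent space $M_v/\sfV M_v$ carries the prescribed $O_{B_v}$-module structure forcing \eqref{eq:detM_v}. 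In the case $v$ unramified in $B$ one uses Morita equivalence (as in Section~\ref{sec:LC.2}, split case) to reduce to the symplectic/Hilbert--Siegel situation, where such a superspecial module with $\sfV^2=p$ is standard (cf.\ \cite{yu:mass_hb, yu:IMRN-2008}); in the case $v$ ramified in $B$ one works with the presentation $O_{B_v}=O_v'[\Pi_v]$ of \eqref{eq:OBv}--\eqref{eq:Pi} and, following \cite[Prop.~5.6]{yu:D}, takes $M_v$ free over $O_{B_v}\otimes_{\Z_p}W(k)$ of the right rank, defining $\sfV$ on each $\breve O_{B_v}^i$-component so that it squares to $p$ and satisfies the determinant condition; here the parity constraint of Theorem~\ref{main.1}(c) (either $m$ even, or $\ord_{\Pi_v}(\gamma)$ odd) is exactly what is needed for a \emph{self-dual} skew-Hermitian $\breve O_{B_v}^i$-lattice to exist — this is where one reuses Proposition~\ref{prop:herm-self-dual} (applied componentwise, with $F_0$ replaced by $L=\Frac W(k)$ in the relevant form).

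\textbf{From the module to the abelian variety.} Having assembled a principally polarized superspecial $O_B\otimes\Z_p$-Dieudonn\'e module $M$ satisfying \eqref{eq:detM}, I would realize $M$ as $M(A_0[p^\infty])$ for a principally polarized $p$-divisible group with $O_B\otimes\Z_p$-action over $k$, then use the abundance of superspecial abelian varieties (every superspecial $p$-divisible group of the right dimension arises from a superspecial abelian variety, by Oort's theorem \cite{oort:product}) to algebraize: one can in fact take $A$ isogenous to a power of a fixed supersingular elliptic curve, equip it with the $O_B$-action and polarization dictated by $M$ away from and at $p$, adjusting by a prime-to-$p$ isogeny so that the away-from-$p$ Tate modules become the unique self-dual skew-Hermitian $O_B\otimes\widehat{\Z}^{(p)}$-lattice of Theorem~\ref{main.1}(b). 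Since $M$ satisfies the determinant condition \eqref{eq:detM} and $\Lie(A)\simeq M/\sfV M$ as $O_B\otimes k$-modules, $(A,\lambda,\iota)$ satisfies \eqref{eq:detA}, completing the proof.

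\textbf{Main obstacle.} The delicate point is the ramified case at $p$: constructing the operator $\sfV$ on the $\breve O_{B_v}^i$-components so that it is simultaneously (i) $O_{B_v}$-semilinear, (ii) compatible with a \emph{perfect} skew-Hermitian pairing under $\langle\sfF x,y\rangle=\langle x,\sfV y\rangle^\sigma$, and (iii) yields the correct tangent-space $O_{B_v}$-module structure of \eqref{eq:detM_v} — all while keeping $\sfV^2=p$. The existence of the right self-dual lattice is governed precisely by the parity condition in Theorem~\ref{main.1}(c), so the bookkeeping of the valuation $\ord_{\Pi_v}(\gamma)$ and its interaction with the "chain" of $\breve O_{B_v}^i$-lattices linked by $\sfV$ is the heart of the matter; everything else (Morita reduction in the split case, algebraization, prime-to-$p$ adjustment) is either standard or already packaged in the cited results.
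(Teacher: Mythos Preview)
Your proposal follows essentially the same two-step architecture as the paper: (i) construct, for each $v\mid p$, a principally polarized superspecial $O_{B_v}$-Dieudonn\'e module satisfying \eqref{eq:detM_v} (this is exactly Proposition~\ref{existloc}, with the same Morita reduction in the split case and the same parity dichotomy in the ramified case), and (ii) algebraize to an $O_B$-abelian variety. So the core of the argument matches.

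The one place where you diverge, and where your write-up is underspecified, is step (ii). The paper does \emph{not} start from Oort's theorem and then try to ``equip $A$ with the $O_B$-action'': instead it cites \cite[Theorem~2.1]{slope} to obtain a priori a supersingular polarized $O_B$-abelian variety $(A',\lambda',\iota')$, uses that $A'[p^\infty]$ and $\mathscr G_p$ are $O_B\otimes\Z_p$-linearly quasi-isogenous (both supersingular), and then modifies $A'$ by an explicit isogeny with kernel $\prod_{\ell\in\{p\}\cup S}\ker\phi_\ell$, where at each bad $\ell\neq p$ the target $\ell$-divisible group comes from the self-dual lattice via $H^1(\Q_\ell,\bfG^1)=0$. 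Your sentence ``equip it with the $O_B$-action \ldots\ dictated by $M$'' hides a real issue: Oort's theorem gives an $A$ with $A[p^\infty]$ matching, but the $O_B\otimes\Z_p$-action on $A[p^\infty]$ does not come for free from a global $O_B\hookrightarrow\End(A)$ compatible with a $*$-polarization; establishing that (for a superspecial $A$) is precisely the content of the result the paper cites. Your route can be made to work, but you would end up re-proving \cite[Theorem~2.1]{slope} along the way, whereas the paper simply invokes it and keeps the algebraization step short.
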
 
%This theorem implies that  there is a triple $(A, \lambda, \iota)$ such that $A$ is superspecial and $(A, \iota)$ 
 % satisfies the determinant condition. 
\begin{proof}
We show the problem reduces to 
 Proposition  \ref{existloc} below. Suppose that  there exists a  principally polarized superspecial
$O_B\otimes \Z_p$-Dieudonn\'e module $M$ of $W(k)$-rank $4dm$ which satisfies
the determinant condition.
We write $\mathscr G_p$ for the $p$-divisible group with 
additional structure corresponding to $M$. 
%Then $\mathscr G$ can be decomposed into local factors as $\mathscr G=\bigoplus_{v\mid p}\mathscr G_{v}$ according to the decomposition $O_B\otimes Z_p=\bigoplus _{v\mid p}O_{B,v}$. 
It suffices to show that there is a principally polarized superspecial
$O_B$-abelian variety $(A, \lambda, \iota)$ over $k$ realizing $\mathscr G_p$.  
 
By \cite[Theorem 2.1]{slope}, 
there exists a \emph{supersingular} polarized (not necessarily
principally) $O_B$-abelian variety $(A',\lambda',\iota')$ 
of dimension $2dm$. 
 By the proof of \cite[Proposition 3.1]{slope}, there is  a
quasi-isogeny $\phi_p : A'[p^{\infty}] \to \mathscr G_p$ of polarized
$p$-divisible groups with $O_B \otimes \Z_p$-actions.
Let $S$ denote the set of primes $\ell$ such that $\ell \neq p$ and $\ell \mid \deg \lambda'$.
For each $\ell \in S$  we have 
$H^1(\Q_\ell, \bfG^1)=0$, and hence 
there is an $O_B \otimes \Z_\ell$-linear
isomorphism $T_\ell(A') \otimes _{\Z_{\ell}} \Q_{\ell} \simeq 
V\otimes_{\Q} \Q_\ell$ preserving the pairings $\< \, , \, \>_{\lambda'}$ and $\psi_\ell$.
Now let $\mathscr G_\ell$ be a  principally polarized
$\ell$-divisible group   with $O_B\otimes \Z_\ell$-action such that
the associated $\ell$-adic Tate module $T_\ell(\mathscr G_\ell)$ with additional structures is
isomorphic to the lattice  $(\Lambda_{\Z_\ell},\psi_\ell)$. 
Then there is a quasi-isogeny $\phi_\ell : A'[\ell^{\infty}] \to \mathscr G_\ell$ of polarized
$\ell$-divisible groups with $O_B \otimes \Z_\ell$-actions.
% the Newton slope of  the $p$-divisible group $A'[p^{\infty}]$ is the
% same as that of $\mathscr G$. 
Further we choose a product $N$ of powers of primes in $S \cup \{p\}$ 
 such that $N\phi_\ell$ is an isogeny  for all
$\ell\in S\cup\{p\}$. Replacing $\phi_\ell$ by $N\phi_\ell$ and
$\lambda'$ by $N^2 \lambda'$, we may assume that the $\phi_\ell$'s are
isogenies. 

%By replacing $\phi$ by $p^r\cdot\phi$ for a sufficiently large $r$, 
%we may assume that $\phi$ is an isogeny. 
Let $A\coloneqq A'/(\prod_{\ell \in \{p\}\cup S} \ker \phi_\ell)$. Further, let  $\lambda$ and $\iota$ be the 
 polarization and $O_B$-multiplication of $A$ induced by $\lambda'$ and $\iota'$, respectively.  
Then  $\lambda$ is principal since it induces the given principal polarization of 
$\mathscr G_\ell$ for each $\ell\in S \cup\{p\}$. 
  Moreover,  we have an isomorphism of $A[p^{\infty}] \simeq 
\mathscr G_p$ of $p$-divisible groups compatible with additional structures. 
 \end{proof}

%We apply \eqref{eq:dec_M} to the self-dual skew-Hermitian 
%$O_B$-lattice $(\Lambda,\psi)$,  and get the decomposition
%$\Lambda_p=\bigoplus_{v \mid p} \Lambda_v$. Observe that the central idempotents
%in the decomposition \eqref{eq:dec} are fixed by the involution
%$*$. 
%This implies that $\psi_p(\Lambda_v,\Lambda_{v'})=0$ if $v\neq
%v'$. Let $\psi_v$ be the restriction of $\psi_p$ to $\Lambda_v$. Then
%$(\Lambda_v,\psi_v)$ is a self-dual $\Zp$-valued skew-Hermitian
%$O_{B_v}$-lattice of $O_{B_v}$-rank $m$. %Further, we have a 
%decomposition 
%\begin{equation}\label{eq:dec_L} 
%$(\Lambda_p,\psi_p)=\bigoplus_{v|p} (\Lambda_v,\psi_v)$. 
%of self-dual $\Zp$-valued skew-Hermitian lattices 
%One easily shows 
%\[ \ol{\Pi}_v=-\Pi_v \quad \text{and}\quad \ol{a+b\Pi_v}=\bar{a}-b\Pi_v \quad \text{for all} \quad a,b\in F_v'.\]
We retain the notation 
from Section \ref{ss:dieu}. 
Let $v$ be a place of $F$ ramified in $B$. 
Let $(F_v')^{\rm ur}$ be the maximal unramified subfield
extension of $\Q_p$ in $F_v'$, and let $(O_{v}')^{\rm ur}=O_{(F_v')^{\rm ur}}$ be  the ring of integers. 
Further let $\Hom_{\Zp}((O_v')^{\rm ur},W(k))$ denote the set of embeddings of $(O_v')^{\rm ur}$ into $W(k)$
over $\Zp$. 
Since the inertial degree of $(F_v')^{\rm ur}/\Q_p$ is 
$2f_v$, we may write $\Hom_{\Z_p}((O_v')^{\rm ur},W(k))=\{\tau_j\}_{j\in
  \Z/2f_v\Z}$ such that $\sigma \circ \tau_j=\tau_{j+1}$. 
  For an $O_{B_v}$-\dieu module $M_v$, we have 
 a decomposition
\begin{equation}
  \label{eq:dec_Mj}
  M_v=\bigoplus_{j\in \Z/2f_v\Z} M_v^j, 
\end{equation}
where $M_v^j$ is  the $\tau_j$-component of $M_v$. 
By \cite[Lemma 5.2 (2)]{yu:D}, the module $M_v$ satisfies the determinant  condition \eqref{eq:detM_v} if and only if the $k$-vector space 
$(M_v/\sfV M_v)^j$ has the same dimension for all $j\in \Z/2f_v\Z$. 
% where $B$ is a totally indefinite quaternion algebra. 
\begin{prop}\label{existloc}
Let $(B, *, O_B)$ be as in Section \ref{ss:PEL} and $m$ be a positive integer. 
If the conditions in Theorem \ref{main.1} hold, then  there exists a principally polarized superspecial $O_B\otimes
\Z_p$-Dieudonn\'e module 
$M$ of $W(k)$-rank $4dm$ which satisfies the determinant
 condition. 
\end{prop}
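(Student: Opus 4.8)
The plan is to construct the required Dieudonn\'e module $M = \bigoplus_{v\mid p} M_v$ place by place, using the decomposition \eqref{eq:M=Mv}, and to produce each $M_v$ as a free $O_{B_v}\otimes_{\Z_p} W(k)$-module (so that the determinant condition holds automatically by \cite[Lemma 5.2(2)]{yu:D}, i.e.\ every graded piece $(M_v/\sfV M_v)^j$ has the same dimension) equipped with a superspecial $\sfV$ satisfying $\sfV^2 = p$ (up to a unit) and a perfect skew-Hermitian pairing $\langle\,,\rangle$. The superspecial condition is the easiest to arrange: following the remark after the definition of superspecial Dieudonn\'e modules, it suffices to prescribe a $\sigma^{-1}$-linear operator $\sfV$ with $\sfV^2 M = pM$, and then $\sfF \coloneqq p\sfV^{-1}$ is automatically integral. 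So the problem reduces to writing down, for each $v\mid p$, a free $O_{B_v}\otimes_{\Z_p} W(k)$-module of the correct rank $4m[F_v:\Q_p]$ carrying such a $\sfV$ and a compatible principal polarization.

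First I would dispose of the places $v\mid p$ that are unramified in $B$. There $O_{B_v} = \Mat_2(O_v)$, and by the Morita-equivalence discussion of Section~\ref{sec:LC.2} (specialized to the local ring $W(k)$ in place of a field) the data of a principally polarized $O_{B_v}$-Dieudonn\'e module reduces to that of a principally polarized $O_v\otimes_{\Z_p} W(k)$-Dieudonn\'e module of rank $2m[F_v:\Q_p]$; here one may simply take $m$ copies of the standard superspecial module of a supersingular elliptic curve (base-changed along $O_v \hookrightarrow W(k)$), which is free of rank $2$ over $\breve O_v^i$ on each graded piece, has $\sfV^2 = p$, and is principally polarized. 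Taking a direct sum over the graded pieces $i\in\Z/f_v\Z$ and applying Morita equivalence produces $M_v$; freeness over $O_{B_v}\otimes W(k)$ is preserved, so the determinant condition holds. The self-duality of the resulting skew-Hermitian lattice is exactly the local statement of Proposition~\ref{sd-split}.

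The real work is at a place $v\mid p$ ramified in $B$, where the parity obstruction from Theorem~\ref{main.1}(c) enters. Here $O_{B_v} = O_v'[\Pi_v]$ as in \eqref{eq:OBv}--\eqref{eq:Pi}, and by Proposition~\ref{prop:herm-self-dual} (over $W(k)$, with the role of the uniformizer played by $\Pi_v$) there is a self-dual skew-Hermitian $\breve O_{B_v}^j$-lattice of rank $m$ on each graded piece $M_v^j$ precisely because either $m$ is even or $\ord_{\Pi_v}(\gamma)$ is odd --- which is one of the equivalent conditions we are assuming. I would build $M_v$ as the $O_{B_v}\otimes_{\Z_p} W(k)$-span of such a lattice, put $\sfV$ to be $\Pi_v$ times a suitable $\sigma^{-1}$-linear shift permuting the graded pieces $M_v^j$ cyclically (so that $\sfV^{2f_v}$ acts as $\pi_v^{f_v}$ times a Frobenius power, hence $\sfV^2 M_v = pM_v$ after the usual normalization), and equip it with the pairing coming from the self-dual Hermitian lattice of Lemma~\ref{quatclass} via the formula $\varphi(x,y) = \psi(x,\gamma y)$ of Section~\ref{sec:LC.2}, so that $\langle\sfF x, y\rangle = \langle x, \sfV y\rangle^\sigma$ holds by the compatibility $(\gamma\Pi_v)^* = \gamma\Pi_v$ already verified in the proof of Theorem~\ref{main.1}. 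Freeness over $O_{B_v}\otimes W(k)$ gives the determinant condition via \cite[Lemma 5.2(2)]{yu:D}.

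The main obstacle I anticipate is arranging all three structures --- the $O_{B_v}$-action, the superspecial $\sfV$ (with the exact relation $\sfV^2 = p$ and $\sfV$ compatible with the involution via $\langle\sfF x,y\rangle = \langle x,\sfV y\rangle^\sigma$), and the \emph{perfect} polarization --- simultaneously and compatibly on the ramified $M_v$, keeping track of the extra factor of $\Pi_v$ in $\sfV$ against the factor $\Pi_v^{\ord(\gamma)-1}$ relating the two dual lattices in \eqref{eq:LC.15}; this is exactly where the parity hypothesis is forced to be used, and getting the bookkeeping of uniformizers and graded shifts right is the crux. Once each $M_v$ is constructed, $M = \bigoplus_{v\mid p} M_v$ is the desired principally polarized superspecial $O_B\otimes\Z_p$-Dieudonn\'e module satisfying the determinant condition, proving Proposition~\ref{existloc}. \qed
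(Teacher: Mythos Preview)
Your overall architecture matches the paper's: reduce to each $v\mid p$, handle split $v$ by Morita (the paper cites \cite[Lemma~4.5]{2003Fourier} for the rank-one superspecial $O_v$-module, then takes direct sums and twists the pairing by $C^{-1}u_v^{-1}$), and at ramified $v$ build $M_v$ from a self-dual Hermitian $(O_{B_v},\bar\cdot)$-lattice. So the strategy is fine.

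The gap is in your definition of $\sfV$ at ramified $v$. Taking $\sfV$ to be ``$\Pi_v$ times a $\sigma^{-1}$-linear shift'' gives $\sfV^2$ acting by $\Pi_v^2=-\pi_v$ (times a shift), so $\sfV^2 M_v=\pi_v M_v$, which equals $pM_v$ only when $e_v=1$. Your sentence ``$\sfV^{2f_v}$ acts as $\pi_v^{f_v}$ \ldots\ hence $\sfV^2 M_v=pM_v$ after the usual normalization'' does not rescue this: the implication is false for $e_v>1$, and there is no normalization that turns $\pi_v$ into $p$. The same defect breaks the polarization identity: with $\sfV\sim\Pi_v$ one gets $\langle\sfV x,\sfV y\rangle\sim\pi_v\langle x,y\rangle^{\sigma^{-1}}$ rather than $p\langle x,y\rangle^{\sigma^{-1}}$. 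The paper avoids this by working first over $\Z_p$ (not over $W(k)$ piecewise): it takes the rational lattice $L_1$ and sets $\sfV e_1=\alpha e_1$ for an $\alpha\in O_{B_v}$ with $\alpha\bar\alpha=p$ (so $\ord_{\Pi_v}(\alpha)=e_v$, not $1$), then extends $\sigma^{-1}$-linearly to $M_1=L_1\otimes_{\Z_p}W(k)$. This single choice simultaneously yields $\sfV^2 M_1=pM_1$, the polarization compatibility $\varphi_B(\sfV x,\sfV y)=p\,\varphi_B(x,y)$, and equal dimensions of $(M_1/\sfV M_1)^j$ (since they all come from $L_1/\sfV L_1$ by base change).

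You also collapse two genuinely different sub-cases. When $\ord_{\Pi_v}(\gamma)$ is odd one can work on the rank-one lattice $(O_{B_v}e_1,(1))$ as above; but when $\ord_{\Pi_v}(\gamma)$ is even (forcing $m$ even), no self-dual rank-one lattice exists, and the paper instead uses the hyperbolic plane $H(-1)$ with $\sfV$ acting diagonally by two elements $\alpha,\beta$ satisfying $\alpha\Pi_v^{-1}\bar\beta=p\Pi_v^{-1}$ and $\ord_{\Pi_v}(\alpha)=\ord_{\Pi_v}(\beta)=e_v$. Your proposal does not distinguish these, and the single formula ``$\sfV=\Pi_v\times\text{shift}$'' cannot produce the correct $\sfV$ on $H(-1)$. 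Finally, invoking Proposition~\ref{prop:herm-self-dual} ``over $W(k)$'' on the pieces $\breve O_{B_v}^j$ is not quite right either: those pieces are not division-quaternion orders (see \eqref{eq:OBvi}); the paper sidesteps this by constructing everything over $O_{B_v}$ first and only then tensoring with $W(k)$.
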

\begin{proof}
%Since $*$ is a positive involution, by 
%\cite[Section 21]{mumford:av},
%there is an element $\gamma \in
%B^\times$ such that $b^{*}=\gamma \bar {b} \gamma^{-1}$ for all $b\in
%B$, $\gamma+\bar{\gamma}=0$, and $\gamma^2$ is a totally negative
%element in $F^\times$. 
% We have decompositions
% $F\otimes \Z_p=\bigoplus _{v \mid p}F_v$ and $O_F\otimes
% \Z_p=\bigoplus_{v \mid p}\mathcal O_v$.  
% Similarly, we have $B_p=\bigoplus_{v \mid p}B_v$ and $O_B \otimes \Z_p=\bigoplus _{v \mid p}\mathcal O_{B_v}$. 
% Put $\gamma=(\gamma_v)_{v \mid p}$. 
% Then the involution $*$ on $B$ induces an involution $(\cdot)^{*}=\gamma_v\bar{(\cdot)}\gamma^{-1}_v$ on $B_v$.
It suffices to show that for each   $v \mid p$ there exists a principally polarized 
$O_{B_v}$-Dieudonn\'e module $M_v$ of $W(k)$-rank
$4m[F_v :\Q_p]$ satisfying   the determinant condition \eqref{eq:detM_v}.  
% where $W=W(k)$ is the ring of Witt vectors over $k$.  
In fact, the direct sum $M \coloneqq \bigoplus_{v\mid p}M_v$ of such modules $M_v$ with additional structures satisfies the desired
properties.  

Let $\gamma \in B^{\times}$ be as in \eqref{eq:gamma}. 
By the decomposition \eqref{eq:dec}, 
one has that   $\gamma=(\gamma_v)_{v \mid p}$ with
$\gamma_v\in B_v^\times$.
The involution $*$ on $B$ induces an involution
$b \mapsto b^*=\gamma_v\bar{b}\gamma^{-1}_v$ on $B_v$.

First we assume that $B_v$ is the matrix algebra. 
 We can take  an isomorphism $B_v \simeq \Mat_2(F_v)$ which identifies  $O_{B_v}$ with $\Mat_2(O_v)$. 
   Since $O_B \otimes \Z_p$ is stable under $*$, the ring $O_{B_v}$ is normalized by $\gamma_v$.  
   This implies that  $\gamma_v$ belongs to $F_v^{\times}\cdot\GL_2(O_v)$, and hence we have  $\gamma_v=\pi_v^{a_v} u_v$ for some elements $u_v\in
\GL_2(O_v)$ and $a_v\in \Z$.  Further we have $b^*=u_v \bar b u_v^{-1}$ for any  $b\in B_v$. 

% The involution  $*$ on $B$ is unchange if one replace $\gamma$ by $u\gamma$ for an element $u \in F^{\times}$. 
% Hence, by the weak approximation, we may assume that 
% $\gamma$ belongs to $\GL_2(O_v)$ for all $v \mid p$ which splits in $B$. 

The construction of $M_v$ in this case reduces to 
the ``Hilbert-Siegel'' case. 
As in \cite[Lemma 4.5]{2003Fourier}, 
there exists a superspecial principally polarized $O_v$-Dieudonn\'e
module $N_1$ of $W(k)$-rank $2[F_v:\Q_p]$, equipped with a non-degenerate alternating pairing $\phi_1 : N_1 \times N_1 \to W(k)$ 
 such that $\phi_1(bx,y)=\phi_1(x,by)$ for any $b\in O_v$.  
The existence of such a pairing and  \cite[Proposition 2.8 (2) $\Rightarrow$ (4)]{2003Fourier} imply that  $N_1$ satisfies the condition corresponding to  \eqref{eq:detM_v}. 
For general $m \geq 1$, we put  
$(N,\phi)\coloneqq(N_1^{\oplus m}, \phi_1^{\oplus m})$. 
Then $(N,\phi)$ 
%this module has a structure of
is of
$W(k)$-rank $2m[F_v:\Q_p]$. 
 
Now we set $M_v\coloneqq N\oplus N=(O_v\oplus O_v)\otimes_{O_v} N$.
We regard the module $M_v$ as a left $O_{B_v}=\Mat_2(O_v)$-module, 
with the multiplications in the same way on column vectors. 
We construct a polarization on  $M_v$. 
We first put 
 \[\psi : M_v \times M_v \to W(k),  \quad  \psi((x_1, x_2),(y_1,y_2)) \coloneqq 
 \phi(x_1,y_1)+\phi(x_2,y_2). \] 
Then a direct computation shows that 
$\psi (bx,y)=\psi(x,b^ty)$ for $x,y \in M_v$ and $b \in \Mat_2(O_v)$. 
 We define a  polarization $\langle\,, \rangle$ on $M_v$ by 
 \[\langle\,, \rangle : M_v \times M_v \to W(k), \quad 
 \langle x, y\rangle 
 \coloneqq\psi (x, C^{-1}u_v^{-1}y), \] where 
$C = \left(
\begin{smallmatrix}
 0  &  1 \\ -1 & 0
 \end{smallmatrix}
 \right)$. 
 Since $C u_v\in \GL_2(O_v)$,   we have  $C^{-1} u_v^{-1}
 M_v=M_v$. 
 It follows that  the dual lattice of $M_v$ with respect to
 $\<\,,\>$ is equal to the one  with respect to $\psi$, which is
 $M_v$. 
 Hence the polarization $\<\,,\>$ is  principal. 
 Further we have that  
 \begin{align*}
 b^*& =u_v \bar{b}u_v ^{-1}=u_v C b^t C^{-1}u_v^{-1} \quad \text{and} 
 \\ 
 \langle bx, y\rangle &  = 
 \psi (bx, C^{-1}u_v^{-1}y)=\psi (x, b^t C^{-1}u_v^{-1}y)=\psi (x,
 C^{-1}u_v^{-1}b^*y)=\langle x, b^*y \rangle.\end{align*}
Thus the \dieu module $M_v$ with the $O_{B_v}$-action and  polarization $\< \, , \, \>$ satisfies the desired properties. 

Next we assume that $B_v$ is a division algebra. 
%We will construct a principally polarized $\mathcal O_{B_v}$-algebra
%of rank $4d$. 
For brevity, we write $\gamma$, $e$,
$f$, $\pi$, $\Pi$ for $\gamma_v$, $e_v$, $f_v$, $\pi_v$ and $\Pi_v$,
respectively. Then $\gamma=\pi^a u$ for some integer $a\in \Z$ and
$u\in O_{B_v}$ with $\ord_{\Pi}(u)=0$ or $1$. 
By Theorem \ref{main.1} 
%Since $(\Lambda_v,\psi_v)$
%is a self-dual $\Zp$-valued skew-Hermitian $O_{B_v}$-lattice of
%$O_{B_v}$-rank $m$, 
%by Proposition %\ref{prop:herm-self-dual}
we have that either $\ord_\Pi(u)=1$, or both
$\ord_{\Pi}(u)=0$ and $m=2n$ is even.
Let $\Tr_{B_v/\Qp}$
be the reduced trace from $B_v$ to $\Qp$, and 
 $\delta^{-1} \in F_v$ be a generator of the inverse different $\mathfrak D_{O_v/\Zp}^{-1}$ of $O_v$ over $\Zp$.

{\bf Case (a)} $\ord_\Pi(u)=1$. Let $(L_1,\varphi_B)=(O_{B_1}
e_1, (1))$ be the $O_{B_v}$-valued Hermitian $O_{B_v}$-lattice of rank
one with $\varphi_B(e_1,e_1)=1$. 
Put 
\[\psi_1(x,y)\coloneqq\Tr_{B_v/\Qp} 
\big(\varphi_B(x,
\delta^{-1} u^{-1} y)\big)\]
 for $x,y\in L_1$. 
One checks that  $(L_1,\psi_1)$ is a
self-dual $\Zp$-valued skew-Hermitian $O_{B_v}$-lattice of
$O_{B_v}$-rank one. We choose an element $\alpha\in O_{B_v}$ such that
$\alpha \bar \alpha=p$. Define an $O_{B_v}$-linear map 
$\sfV:L_1\to L_1$ by $\sfV e_1=\alpha e_1$. 
This map $\sfV$ defines an $O_{B_v}$-\dieu module 
$M_1\coloneqq L_1\otimes_{\Zp} W(k)$ by the usual $\sigma^{-1}$-linear
extension on $W(k)$, i.e., $\sfV (x\otimes a)=\sfV(x)\otimes a^{\sigma^{-1}}$ for
$x\in L_1$ and $a\in W(k)$. 
Then we have  
\[\varphi_B(\sfV e_1,\sfV e_1)=\varphi_B(\alpha e_1,\alpha
e_1)=p\varphi_B(e_1,e_1)\]
 and hence $\varphi_B(\sfV x,\sfV
y)=p\varphi_B(x,y)$ for $x, y\in L_1$. 
It follows that 
\begin{equation}
  \label{eq:pol.1}
\begin{split}
\psi_1(\sfV x,\sfV y)& =\Tr_{B_v/\Qp} \varphi_B(\sfV x, \delta^{-1} u^{-1} \sfV y)
\\ &=\Tr_{B_v/\Qp} \varphi_B(\sfV x, \sfV (\delta^{-1} u^{-1}  y)) \\
&= \Tr_{B_v/\Qp} p \varphi_B(x, \delta^{-1} u^{-1}  y)=p\psi_1(x,y).   
\end{split}   
\end{equation}
Let $\<\, , \>_1: M_1\times M_1\to W(k)$ be the alternating pairing
extending from $\psi_1$ by $W(k)$-linearity. Then \eqref{eq:pol.1} implies that  $\<\sfV x, \sfV y\>=p\<x,y\>^{\sigma^{-1}}$ for $x,y\in M_1$. 
Since $\ord_\pi(\alpha^2)=\ord_\pi(\alpha \bar \alpha)=e$, we have
$\sfV^2 L_1=p L_1$ and $\sfV^2 M_1=p M_1$. Thus,
$(M_1,\<\,, \>_1)$ is a principally polarized superspecial
$O_{B_v}$-\dieu module of $W(k)$-rank $4[F_v:\Qp]$. 
Finally, we have  $(M_1/\sfV M_1)^j=(L_1/\sfV
L_1)\otimes_{(O_v')^{\rm ur},\tau_j} k$ for all $j \in \Z/f\Z$, and in particular  they have the same dimension. 
Hence $M_1$ satisfies  condition \eqref{eq:detM_v}. 
Put $(M_v,\<\,, \>_v)\coloneqq(M_1,\<\, ,\>_1)^{\oplus m}$. Then
$(M_v,\<\,,\>_v)$ satisfies the desired properties. 

{\bf Case (b)} $\ord_\Pi(u)=0$ and $m=2n$ is even. 
Let $L_1\coloneqq H(-1)=O_{B_v} e_1+O_{B_v} e_2$ and $\varphi_B$ be the
Hermitian form defined by 
\[ \varphi_B(e_1,e_1)=\varphi_B(e_2,e_2)=0, \quad
\varphi(e_1,e_2)=\Pi^{-1}. \]
Put 
\[\psi_1(x,y)\coloneqq \Tr_{B_v/\Qp} \big( \varphi_B(x, \delta^{-1} u^{-1}y) \big) \]
 for
$x,y\in L_1$. The same computation shows that
$(L_1,\psi_1)$ is a self-dual $\Zp$-valued skew-Hermitian
$O_{B_v}$-lattice of rank two. We choose two elements $\alpha, \beta \in O_{B_v}$ such that
$\alpha \Pi^{-1} \bar \beta =p \Pi^{-1}$ and $\ord_{\Pi}
\alpha=\ord_{\Pi}(\beta)=e$. For example if $e=2c$ is even, put
$\alpha=\pi^c$ and $\beta=p \pi^{-c}$, and if $e=2c+1$, put
$\alpha=\pi^c \Pi$ and $\beta=p \pi^{-c-1} \Pi$. 
 Define an $O_{B_v}$-linear map 
$\sfV:L_1\to L_1$ by $\sfV e_1=\alpha e_1$ and $\sfV
e_2=\beta e_2$. 
This gives an $O_{B_v}$-\dieu module 
$M_1\coloneqq L_1\otimes_{\Zp} W(k)$. One also easily checks 
$\varphi_B(\sfV x, \sfV y)=p\varphi_B(x,y)$ and $\psi_1(\sfV x,\sfV
y)=p\psi_1(x,y)$ for $x,y\in L_1$ from \eqref{eq:pol.1}. 
Define the polarization 
$\<\,,\>_1:M_1\times M_1 \to W$ in the same way and we have $\<\sfV x,
\sfV y\>_1=p\<x,y\>_1^{\sigma^{-1}}$ for all $x,y\in M_1$. 
%We have  $\varphi_B(\sfV e_1,\sfV e_1)=\varphi_B(\alpha e_1,\alpha
%e_1)=p\varphi_B(e_1,e_1)$ and $\varphi_B(\sfV x,\sfV
%y)=p\varphi_B(x,y)$ for $x, y\in L_1$. 
Furthermore, we see
$\sfV^2 L_1=p L_1$ and $\sfV^2 M_1=p M_1$, so $M_1$ is
superspecial. Finally, since $(M_1/\sfV M_1)^j=(L_1/\sfV
L_1)\otimes_{(O_v')^{\rm ur},\tau_j} k$ for all $j\in \Z/2f \Z$, 
they have the same dimension. Thus,
$(M_1,\<\,, \>_1)$ is a principally polarized superspecial
$O_{B_v}$-\dieu module of $W(k)$-rank $8[F_v:\Qp]$ which satisfies  condition \eqref{eq:detM_v}. 
% Put $(M_v,\<\,, \>_v)\coloneqq(M_1,\<\, ,\>_1)^{\oplus  m}$. 
Then the polarized \dieu module 
$(M_v,\<\, , \>)\coloneqq(M_1,\<\, ,\>_1)^{\oplus n}$ satisfies the desired 
properties. 
\end{proof}
%\begin{remark}
% There is a similar result for a  totally \emph{definite} quaternion algebra $B$, i.e.~of type D  (\cite[Thm.~7.2]{yu:D}). 
%However, for this case, it is not clear whether the  assertion corresponding  to Theorem \ref{exist} is true.  
%end{remark}

\subsection{Shimura varieties and moduli 
 spaces}\label{ss:Sh}
%We recall quaternionic Shimura varieties.   following Shimura 
%\cite{} and Deligne \cite{}. 
%Let $\scrD$ be a tuple $(B,*,O_B, V,\psi, \Lambda, h_0)$, where
%%\begin{itemize}
%\item 
%  $B$ is a totally indefinite quaternion algebra over a totally real field $F$ of degree $d$ over $\mathbb Q$;
%  \item $*$ is  a positive involution on $B$;
%  \item  
% $O_B$ is a maximal order stable under the involution $*$; 
% \item $V$ is a finite faithful left $B$-module;
%\item $\psi:V\times V\to \Q$ is a non-degenerate alternating form such that
%  $\psi(bx,y)=\psi(x,b^*y)$ for all $x,y\in V$ and $b\in B$; 
 % \item $\Lambda$ is a full $O_B$-lattice in $V$; 
%\item $h_0:\C \to \End_{B\otimes \R}(V_\R)$ is an $\R$-linear algebra
 % homomorphism, % where $V_\R\coloneqqV\otimes_\Q \R$,  
%  such that 
%\[ \phi_F(h_0(i)x, h_0(i)y)=\psi(x,y)  \quad {\text{for all}} \quad x,y \in
% V_\R=V\otimes\R, \]
%for any $x, y\in V_\R=V\otimes\R$, one has 
%  $\phi_F(h_0(i)x, h_0(i)y)=\psi(x,y)$ and 
%and the symmetric form  $(x,y)=\psi(x,h_0(i)y)$ is
%  definite (positive or negative) on $V_\R$.   
% \end{itemize}
% In addition, for any prime number  $\ell$ we assume that 
 % \begin{itemize}
   %  \item[(a)]  $O_{B}\otimes_{\Z} \Z_{\ell}$
  %   is a  
   %    maximal order in $B\otimes_\Q \Q_{\ell}$;
   %  \item[(b)] $\Lambda\otimes_\Z \Z_{\ell}$ is self-dual with
    %   respect to the pairing $\psi$.   
   % \end{itemize}

   Let $\mathscr D=(B,*,O_B,V,\psi, \Lambda, h_0)$ be an integral PEL datum of type C, and $\bfG$ be the associated $\Q$-group defined as in \eqref{eq:defG}. 
We define a homomorphism $h : \Res_{\C/\R}\mathbb G_{m, \C} \to \bfG_{\R}$ by restricting $h_0$ to $\C^{\times}$. 
Composing $h_\C$ with the map $\C^{\times} \to \C^{\times} \times \C^{\times}$ where $z \mapsto (z,1)$ then gives $\mu_{h} : \C^{\times} \to \bfG(\C)$. 
Moreover, there is an isomorphism $\End_{B \otimes _\Q \C}(V_{\C}) \simeq \Mat_{2m}(\C)^{d}$,  inducing an embedding of $\bfG(\C)$ into $\GL_{2m}(\C)^d$.  
Up to 
conjugation in $\bfG(\C)$, 
the cocharacter $\mu_h$  is expressed  as  
\begin{equation}
\label{eq:mu_h}
 \mu_{h}(z)=((\diag(z^{m}, 1^{m}), \ldots, (\diag(z^{m}, 1^{m})) 
 \in \bfG(\C) \subset \GL_{2m}(\C)^{d}.
\end{equation}
Let $X$ be the $\bfG(\R)$-conjugacy class of $h$. 
Then the pair $(\mathbf G,X)$ is a Shimura
datum \cite[(2.1.1)]{Deligne}. 
The reflex field of $(\bfG, X)$ is $\Q$ \cite[Section 7]{Shimura1966}. 
%\footnote{By this we mean that the pair $(\mathbf G,X)$ satisfies the
 % axioms for defining Shimura varieties 
 % \cite[(2.1.1.1)-(2.1.1.3)]{Deligne}, even though the group $\mathbf G$ may
 % not be connected.}.  

For any compact open 
subgroup $\K \subset \mathbf G(\A_f)$,  the  Shimura variety   associated to $(\mathbf G,X)$ of level 
$\K$ is defined by   
\begin{align*}
\Sh_{\K}(\bfG, X)_{\C} \coloneqq  
\bfG(\Q)\backslash X \times \bfG(\A_f)/{\K}.  
\end{align*}
This is a quasi-projective normal complex algebraic variety. 
Further, it admits the canonical model $\Sh_{\K}(\bfG, X)$  defined over the reflex field $\Q$.

For the remainder of this paper, we assume that the conditions in Theorem \ref{main.1} hold and fix a \emph{principal} integral PEL-datum 
$\mathscr D$ of type C. 
Further we fix a prime $p$.  
Let $\Qbar \subset \C$ denote the algebraic closure of $\Q$ in $\C$, and fix an embedding $\Qbar \embed \Qbar_p$ into an algebraic closure $\Qbar_p$ of $\Qp$.

 The lattice $\Lambda$  gives a model  over $\Z$ of the $\Q$-group $\bfG$, denoted again by $\bfG$. 
 Now we fix  an integer $N\geq 3$ with $p \nmid N$. 
 We   
  define a compact open subgroup  ${\sf K}^p(N)$ of $\bfG(\mathbb A_f^p)$ by
 \[{\sf K}^p(N)=\ker \Big(\bfG(\widehat{\Z}^p) \to 
 \bfG(\widehat{\Z}^p/N\widehat{\Z}^p)=\bfG(\Z/N\Z)\Big).\]
 We set ${\sf K}_p=\bfG(\Z_p)$ and ${\sf K}={\sf K}_p \cdot {\sf K}^p(N)\subset \bfG(\mathbb A_f)$. 
 Let $\mathbf M_{\sf K}=\mathbf M_{\sf K}(\mathscr D)$ be  the contravariant functor from the category of locally Noetherian schemes over $\Z_{(p)}$ to the category of sets which takes a connected scheme $S$ over $\Z_{(p)}$  to the set of isomorphism
classes of tuples $(A,  \lambda, \iota, \bar{\eta})$ where 
\begin{itemize}
    \item $(A, \lambda, \iota)$ is a principally polarized $O_B$-abelian scheme over $S$ as in Definition \ref{def:ab} which satisfies the determinant condition. 
    \item 
    $\bar{\eta}$   is a $\pi_1(S, \bar{s})$-invariant ${\sf K}^p(N)$-orbit of $O_B \otimes \wh{\Z}^p$-linear isomorphisms $\eta : \Lambda \otimes \wh{\Z}^p \xrightarrow{\sim} 
    T^p(A_{\bar s})$ 
which preserve the  pairings
\begin{align*}
\psi : \Lambda \otimes \wh{\Z}^p 
\times 
\Lambda \otimes \wh{\Z}^p 
\to \wh{\Z}^p 
\quad \text{and} \quad 
\<\, ,\,\>_\lambda :  \wh{T}^p(A_{\bar s}) \times 
 \wh{T}^p(A_{\bar s}) 
  \to \wh{\Z}^p(1)
 \end{align*}
up to a scalar in $(\wh{\Z}^p)^{\times}$. Here, 
  $\bar{s}$ is a geometric
point of $S$, $A_{\bar{s}}$ is the fiber of $A$  over $\bar{s}$,  $\wh{T}
^p(A_{\bar s})$  is its  prime-to-$p$ Tate module, and $\<\, ,\,\>_\lambda$ is the alternating pairing induced by $\lambda$. 
\end{itemize}
Two tuples $(A,  \lambda, \iota, \bar{\eta})$ and $(A',  \lambda', \iota', \bar{\eta}')$ are said to be isomorphic if there exists an $O_B$-linear
isomorphism of abelian schemes $f : A
\xrightarrow{\sim} A'$
such that $\lambda=f^t \circ \lambda' \circ f$ and $\bar{\eta}'=\ol{f \circ \eta}$. See    \cite[1.4.1]{Lan} for more details. 
 
By \cite{Kottwitz} and \cite[Ch.2]{Lan}, 
 the  functor $\mathbf M_{\sf K}$ is represented by a quasi-projective scheme (denoted again by) $\mathbf M_{\sf K}$ over $\Z_{(p)}$. 
 We remark that $\bfM_{\sf K}$ is isomorphic to the moduli problem of  prime-to-$p$  isogeny classes
of abelian schemes with a $\Z^{\times}_{(p)}$-polarization which was studied in \cite{Kottwitz}, under the assumption that $\Lambda$  is self-dual   (\cite[Prop.~1.4.3.4]{Lan}).        
 
 %We write $\mathbf M_{{\sf K}}^{\rm sp}$ for the subset of $\mathbf M_{{\sf K}}(k)$ consisting of  points whose underlying abelian varieties are superspecial. 
%Then $\mathbf M_{{\sf K}}^{{\rm sp}}$ is non-empty.  
%We can reduce this problem to   constructing a Dieudonn\'{e} module with certain additional structures (cf. \ref{existloc}).  We omit details.\footnote{The reader is referred to an earlier version \cite{TY} for the proof.}

%If $B\simeq \Mat_2(F)$, then we will assume $B=\Mat_2(F)$,
%$O_B=\Mat_2(O_F)$ and $*$ is the transpose.
%In this case, by the
%Morita equivalence, the moduli space defined in Definition~\ref{ModSp}
%is a Hilbert-Siegel modular variety.  
% Assume that $p$ is unramified in $B$. 

%Here we regard $\bfG=\GU_{O_B}(\Lambda, \psi)$ as the automorphism group scheme over $\Z$ defined by the lattice $\Lambda$.  

%The moduli scheme 
% $\mathbf M_{\sf K}$ is  a quasi-projective scheme defined 
%over $\Spec \mathbb Z_{(p)}$. 
%By Lemma~\ref{MKint} we may and will identify $\bfM_{\sf K}$ with $\bfM_{\sf K}^{\rm int}$. 
% which is of relative dimension $d\frac{m(m+1)}{2}$.
%In the present case,
% $\bfM_{\sf K}$ is isomorphiparameterizes isomorphism classes of %$2dm$-dimensional
% principally poliarized $O_B$-abelain varieties $(A,\lambda,\iota,\eta)$
% with level-$N$ structure.
 When $B=\Mat_2(F)$, $O_B=\Mat_2(O_F)$, and $*$ is the transpose, Morita equivalence implies   that 
  $\bfM_{\sf K}$ is isomorphic to the Hilbert-Siegel moduli space, which classifies  $dm$-dimensional
 principally polarized $O_F$-abelian varieties 
 with level-$N$ structure. 

Since the group $\bfG$ satisfies the Hasse Principle, 
  the generic fiber  $\bfM_{\sf K}\otimes_{\Z_{(p)}} \Q$ is  isomorphic to the canonical model  
  $\Sh_{\sf K}(\bfG, X)$ (rather than a finite union of them). 

% By \cite{Yu?}, further we have  \[\ker^1(\Q, G)=0.\] 

%The similitude character $c$ induces a surjective map
%\[ \bfG(\Q) \backslash \bfG(\A_f) / \bfG(\wh{\Z}) \to \Q^{\times} \backslash \A_f^{\times} / \wh{\Z}^{\times}. \]

We write $\calM_{\sf K} \coloneqq  \bfM_{\sf K} \otimes _{\Z_{(p)}}k$ for the geometric special fiber of $\bf M_{\sf K}$. 
 Further we write 
 \[ \calM_{\sf K}^{\rm sp} \subset \calM_{\sf K}^{\rm ss} \subset \calM_{\sf K} \]
 for the 
 superspecial and supersingular locus: the largest reduced closed subschemes such that 
 \begin{align*}
 \calM_{\sf K}^{\rm sp}(k) & = \{(A, \lambda, \iota, \bar{\eta})  \in \calM_{\sf K}(k) \mid   \text{$A$ is superspecial}\}, 
 \\
  \calM_{\sf K}^{\rm ss}(k) & = \{(A, \lambda, \iota, \bar{\eta})  \in \calM_{\sf K}(k) \mid   \text{$A$ is supersingular}\}.
 \end{align*}
 By Theorem \ref{exist}, there exists a 
principally polarized superspecial $O_B$-abelian variety $(A, \lambda, \iota)$ over $k$
which satisfies the determinant condition. 
Such a triple $(A, \lambda, \iota)$  together with a level structure $\bar\eta$ gives a $k$-point of the superspecial locus $\calM^{\rm sp}_{\sf K}$. 
Thus we see Theorem \ref{main.2}.

 \section{Irreducible components of the supersingular locus}\label{ss:irr}

%and can be regarded as the canonical integral model of a Shimura variety of Hodge type with a hyperspecial level ${\sf K}_p$ at $p$. 
%We choose a base point $x\in \mathbf M_{\sf K}^{{\rm sp}}$, 
%and consider  
  % its associated inner form $I$ of $G$  as in $\S$\ref{sunif}.   

 %Clearly, result \eqref{P.2} for
%$\pi_0(\Sh_{U})$ remains true.  

%Let $\ker^1(\Q, \mathbf G)$ denote the kernel of
%the local-global map $H^1(\Q, \mathbf G)\to \prod_{p\le \infty} H^1(\Qp, \mathbf G)$

% Let $\scrD=(B, *, V, O_B, \Lambda)$ be an integral PEL-datum. 
% Let $\mathcal A\to \mathbf{M}_{{U}^p(N)}$ be the corresponding universal abelian schemes over $\mathbf {M}_{{U}^p(N)}$. 
%By \cite[Lemma 1.2.5.9]{Lan}, 
%there exists an $O_B \otimes \mathbb Z_p$-module $\Lambda_0$ such that $\Lambda_0 \otimes \mathbb C \cong V_0$ as $O_B \otimes_{\mathbb Z}\mathbb C$-modules. 

 Let $\mathscr D=(B, *, V, \psi, O_B, \Lambda, h_0)$ be a principal integral PEL-datum of type C.  
 In this section, we fix a prime $p>2$ which is {\it unramified in  $B$}, i.e.  for each $v \mid p$,  the extension $F_v/\Q_p$ is unramified and the $F_v$-algebra $B_v$ is isomorphic to $\Mat_2(F_v)$.

 For each prime $\ell$, we have decompositions 
 $F \otimes_{\Q} \Q_{\ell}=\prod_{v \mid \ell}F_v$ and $B \otimes_{\Q}\Q_{\ell}=\prod_{v \mid \ell}B_v$, where  $v$ denotes a finite place of $F$.  
 For each $v \mid \ell$, 
we write $V_v \coloneqq V \otimes _{F}F_v$, and write $(V_v, \psi_v)$ for   the associated  $\Q_{\ell}$-valued skew-Hermitian $(B_v, *)$-module (Definition \ref{def:skew}).  
 
For any commutative $\Q_{\ell}$-algebra $R$, we have 
 \begin{align}\label{eq:bfG}
    \bfG(R) =  
\Bigg\{ 
(r, (g_v)_v) \in R^{\times} \times  \prod_{v \mid \ell} \GU_{\Q_{\ell}}(V_{v}, \psi_{v})(R)  
\mid r=c(g_v)
  \text{ for all }  v \mid \ell
\Bigg\}.
 \end{align}
  We put $m =\rank_B V$. 
 Let $\Delta$ denote the discriminant of $B$ over $F$. 
 Let $(V_{1,v}, \phi_{F_v})$ be the  symplectic $F_v$-space  of dimension  $2m$ when $v \nmid \Delta$, and $(V_v, \varphi_{B_v})$  the Hermitian $(B_v, \bar{\cdot})$-module of rank $m$  when $v \mid \Delta$, unique up to isomorphism in either case (Defninitions \ref{def:sym} and \ref{def:quat}). 
By equalities \eqref{eq:morita} and 
\eqref{eq:gu}, 
we have  isomorphisms of $\Q_{\ell}$-groups 
\begin{align}\label{eq:bfG_2}
\GU_{\Q_{\ell}}(V_{v}, \psi_{v})  \simeq 
   \begin{dcases*}
   \GSp_{\Q_{\ell}}(V_{1,v},  \phi_{F_v})  & if  $v \nmid \Delta$; 
   \\ 
   \GU_{\Q_{\ell}}(V_v, \varphi_{B_v}) & if   $v \mid \Delta$. 
   \end{dcases*}
\end{align}

\subsection{Irreducible  components of affine Deligne-Lusztig varieties}\label{ss:adlv}
We recall some general 
facts about affine Deligne-Lusztig varieties and their irreducible components. 
Let $k$ be an algebraically closed of characteristic $p$, and 
 $L$ be the field of fractions of the ring $W(k)$ of Witt vectors  over $k$.  
Let $G$ be a connected reductive group over $\Z_p$. 
In particular its generic
fiber $G_{\Q_p}$ is an unramified reductive group over $\Q_p$, i.e. quasi-split and splits
over an unramified extension of $\Q_p$.
 %completion $\widehat {\Q_{p}^{{\rm ur}}}$ of the maximal unramified extension $\Q_{p}^{\rm ur}$ of $\Q_p$.  
We fix  a maximal torus and a Borel subgroup $T \subset B \subset G$, and  
we may assume both are defined over $\Z_p$ as in \cite[A.4]{VW}. 
Let $(X^*(T), \Phi, X_*(T), \Phi^{\vee})$ be the corresponding  root datum. 
We write $X_*(T)^{+}$ for the set of dominant elements of $X_*(T)$. 
For $\mu \in X_*(T)^{+}$ and $b \in G(L)$,   the {\it affine Deligne-Lusztig variety} $X_{\mu}(b)$  associated to  ($G, \mu, b$) is a locally closed subscheme of the Witt vector partial affine flag variety ${\rm Gr}_{G}$ (\cite{BS, Zhu}) whose $k$-points are  
\[X_{\mu}(b)(k)=\{ g \in G(L)  \mid  g^{-1}b \sigma(g)\in G(W(k)) \mu(p)  G(W(k))\}/ G(W(k)). 
\] 
Further we  define a $\Q_p$-group $J_b$ by
\begin{align}\label{eq:J_b}
  J_b(R) = \{g\in G(L\otimes_{\Q_p}R ) \mid  g^{-1}b \sigma(g)=b\}
\end{align}
for any $\Q_p$-algebra $R$. 
Then $J_b(\Q_p)$ naturally acts on $X_{\mu}(b)(k)$ by left multiplication. 
 
% We first fix notation and recall some facts about the set of $J_b(\Q_p)$-orbits of irreducible components of  $X_{\mu}(b)$.  

Note that $T_L$ is a split maximal torus in $G_L$. 
Let $\sigma$ be the Frobenius of $L$ over $\Q_p$, acting on the group $X_*(T)$.  
Let $X_*(T)^{\sigma}$ and  $X_*(T)_{\sigma}$ denote the groups of $\sigma$-invariants and  $\sigma$-coinvariants of $X_*(T)$, respectively.  
For each $\lambda \in X_*(T)$, we write   $\underline{\lambda}$ for its image in $X_*(T)_{\sigma}$, and 
 write   $\lambda^{\diamond} \coloneqq f^{-1}\sum_{j=0}^{f-1} \sigma^j(\lambda) \in X_*(T)_{\Q}$ where $f \geq 1$ is an integer with $\sigma^{f}(\lambda)=\lambda$.   
Then  $X_*(T)_{\sigma,\Q} \xrightarrow{\sim} X_*(T)^{\sigma}_{\Q}$ where $ \underline{\lambda}\mapsto \lambda^{\diamond}$. 
Moreover, let  $\pi_1(G)=X_*(T)/\sum_{\alpha \in \Phi^{\vee}}\Z\alpha$ denote the Borovoi’s
fundamental group and $\lambda^{\natural}$  be the image of $\lambda$ in $\pi_1(G)_{\sigma}=\pi_1(G)/(1-\sigma)\pi_1(G)$.  
For $\lambda, \lambda' \in  X_*(T)_{\Q}=X_{*}(T) \otimes \Q$, we write $\lambda \leq \lambda'$ if  $\lambda'-\lambda$ is a non-negative rational linear  
combination of positive coroots. 

Let $B(G)$ be the set of $G(L)$-$\sigma$-conjugacy classes $[b] \coloneqq  \{g^{-1}b \sigma (g) \mid  g \in G(L)\}$ of elements $b \in G(L)$. 
Kottwitz showed that a class $[b] \in B(G)$ is uniquely determined by two invariants: the Kottwitz point 
$\kappa_G(b) \in \pi({G})_{\sigma}$ and  the Newton point 
$\nu_G(b) \in 
X_*(T)_{\Q}^{+}$ (\cite[4.13]{Kottwitz97}).  
%Here, $X_*(T)_{\Q}^{+}$ denotes the set of dominant elements of $X_*(T)_{\Q}$.
The set $B(G)$ naturally forms a poset with $[b]\leq[b']$ 
if 
$\kappa_G(b) = \kappa_G(b')$ and $\nu_G(b) 
\leq  \nu_G(b')$. 
We put  
\begin{equation}\label{eq:BGmu}
 B(G, \mu) \coloneqq  \{[b]\in 
B(G) \mid \nu_G(b) \leq \mu^{\diamond}, \kappa_G(b) = \mu^{\natural}\}.
\end{equation}
For $\mu \in X_*(T)^+$ and $b \in G(L)$, the variety  $X_{\mu}(b)$ is nonempty if and
only if $[b] \in B(G, \mu)$. 
   
By  \cite[Lemma/Definition 2.1]{HV}, 
there exists a unique element  $\underline{\lambda}_G(b) \in X_*(T)_{\sigma}$    such that 
\begin{itemize}
    \item[(i)]
$\underline{\lambda}_G(b)^{\natural}=\kappa_G(b)$ and
\item[(ii)] $\nu_G(b)-\underline{\lambda}_G(b)^{\diamond}$ is equal to a linear combination of simple coroots with coefficients in $[0, 1) \cap \Q$. 
\end{itemize}
This element $\underline{\lambda}_G(b)$ can be regarded as ``the best integral approximation" of the Newton point $\nu_G(b)$. 

Let $\widehat{G}$ be the Langlands dual of $G$ defined over $\overline{\Q}_{\ell}$ for a prime $\ell$  with $\ell \neq p$. Let $\wh{B}$ be a Borel subgroup of $\wh{G}$ 
with maximal torus $\wh{T}$, such that $X_*(T)^{+} =X^*(\wh{T})^+$.  
We write $V_{\mu}$ for the irreducible $\widehat{G}$-module of highest weight $\mu$. 
Let $V_{\mu}(\underline{\lambda}_G(b))$ be the sum of $\lambda$-weight spaces $V_{\mu}(\lambda)$ for $\lambda \in X_{*}(T)=X^*(\wh{T})$ satisfying  $\lambda \equiv \underline{\lambda}_G(b) \pmod {1-\sigma}$.  

Let $\Irr(X_{\mu}(b))$ (resp. ${\rm Irr}^{\rm top}(X_{\mu}(b))$) denote the set of irreducible components (resp. top-dimensional irreducible components) of $X_{\mu}(b)$. The following theorem was  conjectured by Chen and X. Zhu, and proved by Nie and Zhou-Y. Zhu.  
\begin{thm} [{\cite[Theorem 4.10]{Nie}, \cite[Theorem A]{ZZ}}]
There is an equality
\begin{equation}\label{eq:gsatake}
   \lvert  J_b(\Q_p)\backslash \Irr^{{\rm top}} (X_{\mu}(b)) \rvert  = \dim_{\overline{\Q}_{\ell}} V_{\mu}(\underline{\lambda}_G(b)). 
\end{equation}
\end{thm}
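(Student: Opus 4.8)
The plan is to prove \eqref{eq:gsatake} by giving both sides a common interpretation through the geometric Satake equivalence for the Witt vector affine Grassmannian $\mathrm{Gr}_G$. The right-hand side already admits such a reading: by the mixed-characteristic geometric Satake correspondence of Zhu and Bhatt--Scholze, the Schubert variety $\overline{\mathrm{Gr}}_{G,\mu}\subset\mathrm{Gr}_G$ realizes the irreducible $\widehat G$-module $V_\mu$ on its intersection cohomology, and the Mirkovi\'c--Vilonen cycles --- the irreducible components of the intersections $\overline{\mathrm{Gr}}_{G,\mu}\cap S_\lambda$ with the semi-infinite orbits --- form a basis of the weight space $V_\mu(\lambda)$. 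Since $\underline\lambda_G(b)\in X_*(T)_\sigma$ and $V_\mu(\underline\lambda_G(b))$ is by definition the sum of the $V_\mu(\lambda)$ over the finitely many $\lambda\in X_*(T)$ with $\lambda\equiv\underline\lambda_G(b)\pmod{1-\sigma}$, the quantity $\dim V_\mu(\underline\lambda_G(b))$ is literally a count of MV cycles. The goal, then, is to exhibit a $J_b(\Qp)$-equivariant identification between $\Irr^{\mathrm{top}}(X_\mu(b))$ and that set of MV cycles, compatibly with the residual action, so that passing to $J_b(\Qp)$-orbits yields the stated equality.

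The steps I would carry out are: (i) a Hodge--Newton reduction in the style of Hamacher--Viehmann, reducing the general case to a more tractable one (e.g. $[b]$ basic): if $[b]$ comes from $B(M,\mu_M)$ for a proper semistandard Levi $M$ in a Hodge--Newton indecomposable manner, then $X^G_\mu(b)$ fibers over $X^M_{\mu_M}(b)$ with fibers built out of affine spaces, which matches the branching of $V_\mu$ to $\widehat M$ and allows induction on the semisimple rank; (ii) for $[b]$ basic --- where the Newton point is central --- one analyzes $X_\mu(b)$ through its Kottwitz--Rapoport/Ekedahl--Oort stratification, whose top-dimensional strata are indexed by the ``cordial'' elements of the $\mu$-admissible set and are each, up to a Deligne--Lusztig factor and an affine-space factor, controlled by explicit Iwahori--Weyl group combinatorics on which $J_b(\Qp)$ acts transparently; (iii) match this combinatorial index set with a description of weight multiplicities --- concretely, the leading terms of the class polynomials $f_{w,\mathcal{O}}$ in the affine Hecke algebra that compute $\dim X_\mu(b)$ are evaluated by the quantum Bruhat graph (He, Mili\'cevi\'c, Schwer), and the same data computes $\dim V_\mu(\lambda)$ via the path model. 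An alternative for (ii)--(iii), the route of Zhou--Zhu, is to count $\Irr^{\mathrm{top}}(X_\mu(b))$ by a twisted orbital integral and then invoke the (base change) fundamental lemma together with geometric Satake to identify that integral with the weight multiplicity.

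The principal obstacle is the reduction machinery in (i)--(ii): one must show that the number of top-dimensional components, \emph{and} the $J_b(\Qp)$-orbit structure on them, are stable under both the passage to Levi subgroups and the elementary reductions of Deligne--Lusztig / G\"ortz--He--Nie type inside the Iwahori--Weyl group, since a priori these operations can merge or split orbits. A further substantial input is the availability of geometric Satake and the MV-cycle formalism in the imperfect/Witt-vector setting, and, in the Hecke-algebra approach, the precise identification of leading coefficients of class polynomials with quantum Bruhat graph weights. All of this is carried out in \cite{Nie} and \cite{ZZ}, and in the present paper we simply invoke their theorem.
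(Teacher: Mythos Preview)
The paper does not prove this theorem at all: it is stated with attribution to Nie and Zhou--Zhu and used as a black box. Your concluding sentence correctly identifies this. The preceding outline you give --- MV cycles via mixed-characteristic geometric Satake, Hodge--Newton reduction to the basic case, and then either the combinatorial/Deligne--Lusztig route (Nie) or the twisted-orbital-integral route (Zhou--Zhu) --- is a reasonable high-level summary of the strategies in those references, but none of it appears in the present paper, which simply invokes the result.
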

 
Now we apply the above results to the affine Deligne-Lusztig variety corresponding to the supersingular locus of the moduli space associated to the datum $\mathscr D$.  
In the rest of this subsection, we set   $G \coloneqq \bfG_{\Z_p}$. 
Recall we assume that  $F_v/\Q_p$ is unramified and $B_v \simeq \Mat_2(F_v)$ for each $v \mid p$. 
 We write  $f_v=[F_v :\Q_p]$. 
By \eqref{eq:bfG} and \eqref{eq:bfG_2}, we have an isomorphism 
 %Let $\tilde{G}=\Res_{F/\Q_{p}}\GSp_{2n, F}$, whose group of $R$-values for a $\Q_p$-algebra $R$ is given by 
%\begin{align}
%\tilde{G}(R)=
%\left\lbrace ((g_v)_{v}, (r_v)_v) \in \prod_{v\mid p}\GL_{2n}(R\otimes F) \times (R\otimes F)^{\times} \mid  
%g_v^t C g_v =r_v C \ {\rm for \ all} \ v \mid p \right\rbrace.
%\end{align}
\begin{align}\label{G_v}
G(R) \simeq 
\Bigg\{ (r, (g_v)_{v}) \in R^{\times} \times \prod_{v\mid p} \GL_{2m}(F_v \otimes_{\Q_p} R) \mid  rC= 
g_v^t C g_v \text{ for all } v \mid p \Bigg\} 
\end{align} 
for any commutative $\Q_p$-algebra $R$. 
Here, we write $C\coloneqq\begin{psmallmatrix}
 0 &I_m \\
 -I_m & 0
 \end{psmallmatrix}$. 

We fix an isomorphism     $F_v\otimes_{\Q_p} \ol{\Q}_p  = \prod_{j \in \Z/f_v\Z}\ol{\Q}_p$ for each $v \mid p$. 
We define a set $\Psi$ by 
\[\Psi \coloneqq \bigsqcup_{v \mid p} (\Z/f_v\Z),\]
 and we regard  
$G(\ol{\Q}_p)$ via \eqref{G_v} as a subgroup of the product $\ol{\Q}_p^{\times} \times \big( \prod_{ j \in \Psi}\GL_{2m} (\ol{\Q}_p) \big)$. 
Let  $T\subset {G}$ be the maximal torus consisting of all diagonal matrices in $G$, 
 parameterized in the following way:
\begin{align*}  
(\ol{\Q}_p^{\times})^{1+  m \cdot \lvert \Psi \rvert}  & \to T(\ol{\Q}_p), 
\\ 
\big( r,  (t^j_1, \ldots, t^j_m)_{j \in \Psi}  \big)
 & \mapsto \left(r,  \begin{pmatrix}
    r \diag(t^j_{1}, \ldots, t^j_{m}) & 
    0 
    \\ 
0  & 
\diag((t^j_{1})^{-1}, \ldots, (t^j_{m})^{-1})
\end{pmatrix}_{{j \in \Psi}}  \right).
\end{align*}   
 Let $\omega, (\epsilon_{i}^j)_{j \in \Psi, 1 \leq i \leq m} \in X^*(T)$ be the characters   defined by 
 \[ \omega \big( r,  (t^k_1, \ldots, t^k_m)_{k \in \Psi}  \big)=r,  \quad  \epsilon^j_{i} \big( r,  (t^k_1, \ldots, t^k_m)_{k \in \Psi}  \big)=t^j_{i}. \]      
 These characters  give a basis of   $X^*(T)$. 
Let  $\omega^*, (\epsilon^{j*}_{i})_{j \in \Psi, 1\leq i \leq m}$ denote the dual basis for $X_*(T)$:  For $t \in \ol{\Q}_p^{\times}$ we have 
\begin{align*}
    \omega^*(t) & = \left( 
    t, \begin{pmatrix}
        tI_m & 0 
        \\ 
        0 & 
        I_m
    \end{pmatrix}_{k\in \Psi }
    \right), 
    \\
\epsilon_{i}^{j*}(t)  & = (1, (g^{k})_{k \in \Psi}), 
 \quad  g^{k}= \begin{cases*}
    \diag(1, \ldots, 1, \overset{i}{t}, 1, \ldots, 1, \overset{m+i}{t^{-1}}, 1, \ldots, 1) & if $k=j$; 
    \\ 
    I_{2m} & 
    if $k \neq j$.
    \end{cases*}
\end{align*}
The Frobenius $\sigma$ acts on $X_{*}(T)$ by  
$\sigma(\omega^*)=\omega^*$ and $\sigma(\epsilon^{j*}_{i})=
\epsilon^{j+1*}_{i}$.

Let $B$ be the Borel subgroup  consisting of all upper triangular matrices in $G$. 
 The corresponding simple roots and  coroots are
\begin{align} \alpha^j_{1}& =\epsilon^j_{1}-\epsilon^j_{2},   &  & 
\ldots, &    
 \alpha^j_{m-1}&=\epsilon^j_{m-1}-\epsilon^j_{m}, &   \alpha^j_{m}&=\omega+
2\epsilon^j_{m} 
 & \in X^*(T), 
\\ \label{eq:coroots} \alpha_{1}^{j\vee}&=\epsilon^{j*}_{1}-\epsilon^{j*}_{2},  &  & 
\ldots, &  
\alpha_{m-1}^{j\vee}& =
 \epsilon^{j*}_{m-1}-\epsilon^{j*}_{m}, &   \alpha_{m}^{j\vee}&=\epsilon^{j*}_{m} & \in X_*(T),
 \end{align}
 varying $j \in \Psi$.

Let $[\mu]$ be the conjugacy class of the cocharacter $\mu_h$ attached to the datum $\mathscr D$ as in Section \ref{ss:Sh}. 
We fix an  embedding 
$\ol{\Q} \hookrightarrow \ol{\Q}_p$ and 
regard $[\mu]$  as a $W$-orbit in $X_*(T)$, where  $W \coloneqq N_G(T)/T$ denotes the  Weyl group. 
The dominant representative of $[\mu]$ in $X_*(T)$ is denoted by
$\mu$.  
 The description of $\mu_h$ in \eqref{eq:mu_h} implies  that 
 \begin{equation}\label{eq:mu}
  \mu=\omega^* \in X_*(T).    
 \end{equation}

Recall that a class $[b]\in B(G)$ is called basic if its  Newton point $\nu_G(b)$ lies in $X_*(Z_G)_{\Q}$, where $Z_G$ is  the center of $G$.  
Let 
 $[b]$ be the unique basic class in $B(G, \mu)$  \cite[6.4]{Kottwitz97}. 
Then 
\begin{equation}\label{eq:nu}
\nu_{G}(b)=
 \omega^* -\frac{1}{2} \sum_{j \in \Psi, 1\leq i\leq m} \epsilon^{j*}_{i} \in X_*(T)_{\Q}.
 \end{equation}
 In fact, $\nu_{G}(b)$  is characterized by the properties that $\nu_{G}(b) \leq \mu^{\diamond}(=\omega^*)$, and that $\nu_{G}(b) \in X_*(Z_G)_{\Q}$  since $[b] \in B(G, \mu)$ is basic. 
 The RHS of \eqref{eq:nu} satisfies these properties    as  
 \[\omega^*-({\rm RHS})=\sum 2^{-1} i  \alpha_{i}^{j\vee}, \quad (2 \cdot {\rm RHS})(t) =(t^2, (tI_{2m})_{j \in \Psi}) \in Z_G.\]   
Moreover, the map  $X_*(T) \to   \Z$ where   $c \omega^* + \sum a^j_{i} \epsilon^{j*}_{i}\mapsto c$ induces an identification  $\pi_{1}(G)_{\sigma}=\pi_1(G)  \xrightarrow{\sim} \Z$.  
By \eqref{eq:BGmu} and \eqref{eq:mu}, we have 
\begin{equation}\label{eq:kappa}
 \kappa_{G}(b)=1.
\end{equation}
\begin{lemma}\label{lgb}
Let $[b] \in B(G, \mu)$ be the basic class   and  
 $\underline{\lambda}_G(b) \in X_*(T)_{\sigma}$ be  the element  satisfying properties {\rm (i)} and  {\rm (ii)} above. 
 Then there is an equality  
\begin{align*}
\underline{\lambda}_G(b)  =\omega^* - \sum_{v \mid p} 
\left( \left\lceil \frac{f_v}{2} \right\rceil 
 \sum_{1 \leq i \leq m,   i : {\text{odd}}}  
   \epsilon^{0_v*}_{i}
    + \left\lfloor 
\frac{f_v}{2} 
\right\rfloor \sum_{1 \leq i \leq m,  i : {\text{even}}}
\epsilon^{0_v*}_{i} 
  \right) \pmod{1-\sigma},
\end{align*} 
where $0_v$ denotes the zero element of $\Z/f_v\Z$ for  $v \mid p$. 
\end{lemma}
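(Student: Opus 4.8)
The plan is to check that the right-hand side, call it $\lambda_0$, satisfies the two properties (i) and (ii) characterizing $\underline{\lambda}_G(b)$; the uniqueness statement in \cite[Lemma/Definition 2.1]{HV} then gives the equality. To see that $\lambda_0$ is a well-defined class in $X_*(T)_{\sigma}$ I would first note that $\sigma$ fixes $\omega^*$ and cyclically permutes the $\epsilon^{j*}_{i}$ within each block $j\in\Z/f_v\Z$, so replacing the base index $0_v$ by any other $j$ changes a chosen lift of $\lambda_0$ only by an element of $(1-\sigma)X_*(T)$.

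For property (i), I would apply the map $\natural$ to a lift of $\lambda_0$. From \eqref{eq:coroots}, telescoping the simple coroots gives $\epsilon^{j*}_{i}=\sum_{l=i}^{m}\alpha_{l}^{j\vee}$, so each $\epsilon^{j*}_{i}$ lies in the coroot lattice; hence $\lambda_0^{\natural}$ is the image of $\omega^*$, which is $1$ under the identification $\pi_1(G)_{\sigma}\xrightarrow{\sim}\Z$ from the text. By \eqref{eq:kappa} this equals $\kappa_G(b)$, giving (i).

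For property (ii), the main computation is to take the lift $\lambda=\omega^*-\sum_{v\mid p}\bigl(\lceil f_v/2\rceil\sum_{i\ \mathrm{odd}}\epsilon^{0_v*}_{i}+\lfloor f_v/2\rfloor\sum_{i\ \mathrm{even}}\epsilon^{0_v*}_{i}\bigr)$, form its $\diamond$-average (which replaces $\epsilon^{0_v*}_{i}$ by $\tfrac1{f_v}\sum_{j\in\Z/f_v\Z}\epsilon^{j*}_{i}$), and subtract it from $\nu_G(b)=\omega^*-\tfrac12\sum_{j\in\Psi,\,1\le i\le m}\epsilon^{j*}_{i}$ as in \eqref{eq:nu}. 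In the block of $v$ the coefficient of $\sum_{j}\epsilon^{j*}_{i}$ is $\tfrac{\lceil f_v/2\rceil}{f_v}-\tfrac12$ for $i$ odd and $\tfrac{\lfloor f_v/2\rfloor}{f_v}-\tfrac12$ for $i$ even; these vanish when $f_v$ is even and equal $\pm\tfrac1{2f_v}$ when $f_v$ is odd, so
\begin{equation*}
\nu_G(b)-\lambda^{\diamond}=\sum_{v\mid p,\ f_v\ \mathrm{odd}}\frac1{2f_v}\sum_{j\in\Z/f_v\Z}\sum_{i=1}^{m}(-1)^{i-1}\epsilon^{j*}_{i}.
\end{equation*}
Substituting $\epsilon^{j*}_{i}=\sum_{l=i}^{m}\alpha_{l}^{j\vee}$ and interchanging the sums, $\sum_{i=1}^{m}(-1)^{i-1}\epsilon^{j*}_{i}=\sum_{l\ \mathrm{odd},\ l\le m}\alpha_{l}^{j\vee}$, so $\nu_G(b)-\lambda^{\diamond}$ is a combination of simple coroots whose coefficients are $0$ or $\tfrac1{2f_v}$, all lying in $[0,1)\cap\Q$. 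This is (ii), and the lemma follows.

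Everything here is elementary lattice bookkeeping; the only place needing care — and the step I expect to be the main obstacle — is correctly tracking the interaction between the parity of the index $i$ and the parity of $f_v$ when reorganizing the half-integral Newton point into simple coroots, which is precisely what produces the $\lceil f_v/2\rceil$ and $\lfloor f_v/2\rfloor$ in the stated answer.
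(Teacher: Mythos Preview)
Your proof is correct and follows essentially the same approach as the paper: verify properties (i) and (ii) directly for the proposed element and invoke uniqueness. The paper's computation of $\nu_G(b)-\lambda^{\diamond}$ is organized identically to yours, arriving at the same expression $\sum_{v,\,f_v\text{ odd}}\sum_j(2f_v)^{-1}\sum_{i\text{ odd}}\alpha_i^{j\vee}$; your explicit use of the telescoping identity $\epsilon^{j*}_i=\sum_{l=i}^m\alpha_l^{j\vee}$ just makes transparent the step the paper leaves implicit.
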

 \begin{proof}
 We have   $({\rm RHS})^\natural=1$ and hence it satisfies 
 property (i). 
Moreover, 
 we have $(\epsilon^{0_v*}_{i})^{\diamond}=f_v^{-1}\sum_{j \in \Z/f_v\Z}\epsilon^{j*}_{i}$ for each $(v, i)$,   
 and hence 
 \begin{align*}
    &  \nu_G(b)-({\rm RHS})^{\diamond}
     \\ 
   = &    \sum_{\substack{v \mid p, \, j \in \Z/f_v\Z,\\[2pt]  1 \leq i \leq m, \, i: \text{ odd}}}
         \left( 
          \left\lceil \frac{f_v}{2} \right\rceil\frac{1}{f_v} 
         -\frac{1}{2}\right)
         \epsilon_{i}^{j*} 
         +
         \sum_{\substack{v \mid p, \, j \in \Z/f_v\Z, \\[2pt] 1 \leq i \leq m, \,    i : \text{ even}}}
         \left( 
         \left\lfloor \frac{f_v}{2} \right\rfloor \frac{1}{f_v} 
         -\frac{1}{2}\right)
         \epsilon_{i}^{j*} 
         \\
         = & 
         \sum_{\substack{v \mid p, \, f_v: \text{ odd}, \\[2pt]  j \in \Z/f_v\Z}}
         \frac{1}{2f_v}  
        \left(  \sum_{\substack{1 \leq  i \leq m, \\[2pt]  i :\text{ odd}}}  \epsilon^{j*}_{i}-
        \sum_{\substack{1 \leq  i \leq m, \\[2pt] i :\text{ even}}}  \epsilon^{j*}_{i}
        \right)
        \\
        = &  \sum_{\substack{v \mid  p, \, f_v: \text{ odd}, \\[2pt]  j \in \Z/f_v\Z}}
         \frac{1}{2f_v}  
          \sum_{\substack{1 \leq  i \leq m\\[2pt] i: \text{ odd}}}\alpha_{i}^{j\vee}.
 \end{align*}
 Thus the RHS satisfies property (ii). 
 \end{proof}  
\begin{prop}\label{prop:ADLV}
Let $[b]\in B(G, \mu)$ be the basic class. Then  
\[ \lvert J_b(\Q_p)\backslash \Irr(X_{\mu}(b)) \rvert = \prod_{v \mid p} 
\begin{pmatrix}
f_v
\\ 
\lfloor f_v/2 \rfloor
\end{pmatrix}^m. \]
\end{prop}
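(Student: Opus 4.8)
The plan is to combine the theorem of Nie and Zhou--Y.~Zhu recalled in \eqref{eq:gsatake} with an explicit computation of the weight multiplicity $\dim_{\overline{\Q}_\ell} V_\mu(\underline{\lambda}_G(b))$. First I would reduce to the top-dimensional components: since $[b]$ is basic, $X_\mu(b)$ is equidimensional --- equivalently the basic (supersingular) locus $\calM^{\rm ss}_{\sf K}$ is, as recalled in the introduction, and the two are related by $p$-adic uniformization --- so $\Irr(X_\mu(b)) = \Irr^{\rm top}(X_\mu(b))$. Hence by \eqref{eq:gsatake} the quantity we must compute equals $\dim_{\overline{\Q}_\ell} V_\mu(\underline{\lambda}_G(b))$.

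Next I would identify $V_\mu$. By \eqref{eq:mu}, $\mu = \omega^*$; pairing $\omega^*$ with the positive roots of $G$, which in the block indexed by $j \in \Psi$ are $\epsilon^j_i - \epsilon^j_{i'}$ for $i < i'$ and $\omega + \epsilon^j_i + \epsilon^j_{i'}$ for $i \le i'$, one gets only the values $0$ and $1$, so $\mu$ is minuscule. Therefore every weight of $V_\mu$ occurs with multiplicity one and the weights form the single Weyl orbit $W\cdot\mu$, where $W = \prod_{j\in\Psi} W_j$ and $W_j$ is the Weyl group of the $C_m$-factor at $j$. Using the coroots \eqref{eq:coroots} one computes $s^j_m\cdot\omega^* = \omega^* - \epsilon^{j*}_m$, and more generally that $W_j$ fixes the $\omega^*$-coordinate and that the $W_j$-orbit of $\omega^*$ is the set of $2^m$ elements $\omega^* + \sum_{i=1}^m a^j_i\,\epsilon^{j*}_i$ with $a^j_i \in \{0,-1\}$ (from $a^j\equiv 0$ the reflection $s^j_m$ produces $a^j_m = -1$, and permutations together with further applications of $s^j_m$ reach all sign patterns). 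As distinct blocks are acted on independently,
\[
  W\cdot\mu = \Bigl\{\, \omega^* + \sum_{j\in\Psi}\sum_{i=1}^{m} a^j_i\,\epsilon^{j*}_i \ :\ a^j_i \in \{0,-1\} \,\Bigr\}.
\]

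It then remains to count the $\lambda \in W\cdot\mu$ congruent to $\underline{\lambda}_G(b)$ modulo $1-\sigma$. Since $\sigma$ fixes $\omega^*$ and cyclically permutes $\{\epsilon^{j*}_i\}$ within each block $\Z/f_v\Z$, the projection $X_*(T)\to X_*(T)_\sigma$ sends $\sum_{v,j,i} a^{(v,j)}_i \epsilon^{(v,j)*}_i$ to $\sum_{v,i}\bigl(\sum_{j\in\Z/f_v\Z} a^{(v,j)}_i\bigr)\bar\epsilon^{v*}_i$, where $\bar\epsilon^{v*}_i$ is the common image of the $\epsilon^{(v,j)*}_i$. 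Comparing with the value of $\underline{\lambda}_G(b)$ provided by Lemma \ref{lgb}, a weight $\omega^* + \sum a^{(v,j)}_i \epsilon^{(v,j)*}_i \in W\cdot\mu$ maps to $\underline{\lambda}_G(b)$ precisely when, for each $v\mid p$ and each $i$, the number of indices $j\in\Z/f_v\Z$ with $a^{(v,j)}_i = -1$ equals $\lceil f_v/2\rceil$ if $i$ is odd and $\lfloor f_v/2\rfloor$ if $i$ is even. For each such pair $(v,i)$ there are $\binom{f_v}{\lceil f_v/2\rceil} = \binom{f_v}{\lfloor f_v/2\rfloor}$ admissible choices, and the choices for different pairs are independent, so $\dim_{\overline{\Q}_\ell} V_\mu(\underline{\lambda}_G(b)) = \prod_{v\mid p}\binom{f_v}{\lfloor f_v/2\rfloor}^m$, which is the assertion.

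The only ingredient imported from outside is the equidimensionality of $X_\mu(b)$; the remainder is the root-datum bookkeeping above. The step I expect to demand the most care is the determination of the minuscule orbit $W\cdot\mu$ --- in particular tracking how the reflection $s^j_m$ shifts by the similitude cocharacter $\omega^*$ --- and its compatible passage to the $\sigma$-coinvariants $X_*(T)_\sigma$.
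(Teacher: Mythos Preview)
Your proposal is correct and follows essentially the same approach as the paper: both use \eqref{eq:gsatake}, identify $\mu=\omega^*$ as minuscule so that the weights of $V_\mu$ form the single orbit $W\cdot\mu$ with multiplicity one, describe this orbit as $\{\omega^*+\sum a^j_i\epsilon^{j*}_i : a^j_i\in\{0,-1\}\}$, and then count via Lemma~\ref{lgb} the elements congruent to $\underline{\lambda}_G(b)$ modulo $(1-\sigma)$. The only cosmetic difference is that the paper justifies equidimensionality of $X_\mu(b)$ directly from the minusculeness of $\mu$, whereas you route through the equidimensionality of $\calM^{\rm ss}_{\sf K}$ and $p$-adic uniformization; the direct argument is cleaner and avoids a circularity risk, since in this paper the uniformization statement (Theorem~\ref{inner}) is invoked after Proposition~\ref{prop:ADLV}.
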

\begin{proof}
 Recall that we identify $X_{*}(T)$ with $X^*(\wh{T})$, and write $V_{\mu}(\underline{\lambda}_G(b))= \bigoplus  V_{\mu}(\lambda)$ where the sum is taken over all $\lambda \in X_{*}(T)$ with $\lambda \equiv \underline{\lambda}_G(b) \pmod {1-\sigma}$. 
 Since $\mu$ is minuscule,  for any $\lambda \in X_*(T)$ we have  $\dim_{\overline{\Q}_{\ell}} V_{\mu}(\lambda)=1$ or $0$ according as $\lambda\in W\cdot \mu$ or not. 
Hence  
\begin{equation}\label{eq:min}
\dim_{\overline{\Q}_{\ell}}V_{\mu}(\underline{\lambda}_G(b))= \# \{ \lambda \in W \cdot \mu \mid  \lambda \equiv \underline{\lambda}_G(b) \pmod{1-\sigma}\}.
\end{equation}

There is a decomposition of the Weyl group   $W=\prod_{j \in \Psi}W^{j}$ such that  $W^{j}$ is generated by elements  switching $\omega^*$ with $\omega^*-\epsilon^{j*}_{i}$  and those 
permuting the elements  $\epsilon^{j*}_{1}, \ldots, \epsilon^{j*}_{m}$ (so that    $W^j \simeq  (\Z/2\Z)^{m} \rtimes \mathfrak S_m$). 
Hence, the orbit $W \cdot \mu=W \cdot \omega^*$ consists of $2^{m \cdot \lvert \Psi \rvert}$-elements of the form    $\omega^*+\sum a^j_{i}\epsilon^{j*}_{i}$ where $a^j_{i}=-1$ or $0$. 

Further, a short computation shows that the submodule 
$(1-\sigma) X_{*}(T)$ 
consists of all elements 
$\lambda = c \omega^*+\sum_{j \in \Psi, 1\leq i \leq m}a^j_{i}\epsilon^{j*}_{i} \in X_{*} (T)$ satisfying $c=0$ and $\sum_{j \in \Z/f_v\Z} a^j_{i}=0$ for all $v \mid p$ and $1\leq i\leq m$. 
%In fact, for any $\lambda=\sum_{(v, j, i)}a_{v,j,i}\epsilon^{j*}_{i}+c\omega^*$ we have  that 
%$\lambda-\sigma({\lambda})=\sum_{(v, i, j)} (a_{v,j,i}- a_{v,  j-1,i})\epsilon^{j*}_{i}$. 
%Conversely, if    $\sum_{j \in \Z/f_v\Z} a_{v,j,i}=0$  then  the element  $\alpha \coloneqq \sum_{(v, j,i)}(\sum_{k \in \{0, \ldots, j\} \subset \Z/f_v \Z}  a_{v, k,i})\epsilon^{j*}_{i}$ satisfies  $\tau-\sigma(\alpha)=\alpha$. 
This and Lemma \ref{lgb} imply that an element  $\lambda \in X_*(T)$ satisfies $\lambda \equiv \underline{\lambda}_G(b) \pmod {1-\sigma}$ 
if and only if for all  $v$ and $i$ it satisfies  $\sum_{j \in \Z/f_v\Z}
a^j_{i}=-
\lceil f_v/2 \rceil$ or $-\lfloor f_v/2 \rfloor$ according as $i$ is odd or even.

Now let $S$ be the set of all $m$-tuples  $(J_1, \ldots, J_m)$ of subsets    $J_{i}\subset 
 \Psi$ such that for each $v \mid p$ it satisfies 
$\lvert J_i\cap (\Z/f_v\Z) \rvert = \lceil f_v/2 \rceil$ or $\lfloor f_v/2 \rfloor$  according as $i$  is odd  or even. 
Note that we have $\begin{pmatrix}
f_v
\\ 
\lceil f_v/2 \rceil
\end{pmatrix}=
\begin{pmatrix}
f_v
\\ 
\lfloor f_v/2 \rfloor
\end{pmatrix}$ and hence  $\lvert S \rvert = \prod_{v \mid p}\begin{pmatrix}
f_v
\\ 
\lfloor f_v/2 \rfloor
\end{pmatrix}^m$.  
The above argument shows that  
the assignment $(J_1, \ldots, J_m) \mapsto \omega^*-\sum_{1\leq i \leq m, \, j \in J_i}\epsilon^{j*}_{i}$ induces  a bijection from the set $S$ to the set on RHS of \eqref{eq:min}. 

Since $\mu$ is minuscule, $X_{\mu}(b)$ is equi-dimensional and in particular  
$\Irr^{\rm top}(X_{\mu}(b))=\Irr(X_{\mu}(b))$. 
These facts and 
equalities    \eqref{eq:gsatake} and     \eqref{eq:min}  imply the assertion.  
%of cocharacters  $\lambda \in X_{*}(T)$ such that $\lambda\in  W \cdot \mu$ and $\underline{\lambda}_G(b)= \lambda \pmod{1-\sigma}$.  
\end{proof}

 \subsection{The group of self-quasi-isogenies of a supersingular abelian variety}\label{ss:mass}
Recall from \S\ref{ss:Sh} that $\bfM_{\sf K}$ denotes the moduli scheme attached to the principal integral PEL-datum $\mathscr{D}$ and $N \geq 3$ with $p \nmid N$. 
We write 
 $\calM_{\sf K}  \coloneqq  \bfM_{\sf K}\otimes_{\Z_{(p)}} k$ for the special fiber. Our assumption that $p$ is unramified in $B$ implies  $\calM_{\sf K}$ is a smooth algebraic variety over $k$. 
Take a point  $x \in \mathcal M_{{\K}}(k)$, and let $(A, \lambda, \iota)$ denote  the principally  polarized $O_B$-abelian variety over $k$  corresponding to $x$.  
We  write $\End_{B}^0(A) \coloneqq \End_{B}(A) \otimes \Q$, and  define a $\Q$-group  $I$ by 
 \begin{align*}
 I(R)
  = \{g \in (\End_{B}^0(A) \otimes_{\Q} R)^{\times}  \mid  \exists c(g) \in  R^{\times} \text{ s.t. } g'\cdot g=  \id \otimes c(g) \}
 \end{align*} 
for any commutative $\Q$-algebra $R$. 
Here, $g \mapsto g'$ is the Rosati involution induced by  $\lambda$. 

By Theorem \ref{main.2}, the supersingular locus $\calM_{\sf K}^{\rm ss}$ is non-empty. 
Further, the description of the Newton point of the basic class $[b]\in B(\bfG_{\Z_p}, \mu)$ in \eqref{eq:nu} implies that the supersingular locus is precisely
the {\it basic locus} in the sense of \cite[Definition 8.2 and Example 8.3]{VW}. 
Moreover, the group $\bfG_{\Q}$ satisfies the Hasse principle. 
Hence the $p$-adic uniformization
theorem of Rapoport and Zink {\cite[Theorem 6.30]{RZ}} applies to the supersingular locus. 
 Note that in {\it loc. cit.} they described the completion of the integral model along
the basic locus 
as a quotient of what is now called a Rapoport–Zink formal scheme. 
A description using an affine Deligne-Lusztig variety was given  in \cite[Corollary 7.2.16]{XZ} and \cite[Proposition 5.2.2]{HZZ}. 
\begin{thm}[\cite{RZ, XZ, HZZ}]\label{inner}
 Assume that $x$ is lying on the supersingular locus $\M_{\K}^{\rm ss}(k)$. 

 (1) 
The $\Q$-group  $I$ is an inner form of $\bfG_{\Q}$, and such that $I(\R)$ is compact modulo center. 
Further,  there are natural identifications  
\begin{align*}
I_{\Q_{\ell}} 
=
\begin{cases} 
 \bfG_{\Q_{\ell}}  & \  {\rm if} \ \ell \neq p, 
\\ 
 J_b & \ {\rm if} \ \ell=p. 
 \end{cases}
 \end{align*}

(2) 
For any point $x' \in \calM^{\rm ss}_{\sf K}(k)$, the associated $\Q$-group $I'$ is isomorphic to $I$ 
as inner forms of $\bfG_{\Q}$. 

 (3) 
 There is an isomorphism of perfect  schemes 
\[ \Theta : 
I(\Q) \backslash 
X_{\mu}(b) \times \mathbf  G(\A_f^{p})/{\K}^p(N)
\xrightarrow{\sim}
 \mathcal M_{{\K}}^{{\rm ss}, \rm pfn},  
\]
where $\mathcal M_{{\K}}^{\rm{ss}, {\rm pfn}}$ denotes the perfection of $\mathcal M_{{\K}}^{{\rm ss}}$. 
\end{thm}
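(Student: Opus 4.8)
The plan is to derive all three statements from the $p$-adic uniformization theorem of Rapoport and Zink \cite{RZ}, together with its reformulation in terms of affine Deligne--Lusztig varieties in the Witt vector affine Grassmannian due to Xiao--Zhu \cite{XZ} and Hamacher--Zhou--Zhu \cite{HZZ}, and the structure theory of supersingular abelian varieties and their endomorphism algebras.

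For part (1), I would begin with the point $x$ corresponding to $(A,\lambda,\iota)$. Since $A$ is supersingular, its rational Dieudonn\'e isocrystal is isoclinic of slope $1/2$; taking $b \in G(L)$ to be the associated Frobenius, the Newton-point computation \eqref{eq:nu} shows that $b$ represents the unique basic class in $B(G,\mu)$. By the very definition of $J_b$ in \eqref{eq:J_b}, the group $J_b(\Q_p)$ is the group of $O_B\otimes\Q_p$-linear self-quasi-isogenies of $A[p^\infty]$ preserving $\lambda$ up to a $\Q_p^\times$-scalar, which gives the identification $I_{\Q_p}=J_b$. At a prime $\ell \neq p$, the rational Tate module $V_\ell(A)$ with its $O_B$-action, $\lambda$-Weil pairing and level structure is isomorphic, up to similitude, to $(V,\psi)\otimes_\Q \Q_\ell$ as a skew-Hermitian $(B,*)$-module, using that $H^1(\Q_\ell,\bfG^1)=1$ as recorded above; hence $I_{\Q_\ell}=\bfG_{\Q_\ell}$. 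Positivity of the Rosati involution attached to $\lambda$ forces $I(\R)$ to be anisotropic modulo center, i.e.\ compact modulo center. Finally, $I$ and $\bfG_\Q$ become isomorphic over every place of $\Q$, and $\bfG$ satisfies the Hasse principle \cite[Section 7]{Kottwitz}, so $I$ is a global inner form of $\bfG_\Q$. For part (2), any two supersingular abelian varieties over $k$ of dimension $2dm$ are isogenous, and as in the proof of Theorem~\ref{exist} one may promote such an isogeny to an $O_B$-linear quasi-isogeny $A\to A'$ compatible with the polarizations up to a scalar in $\Q^\times$ (the local obstructions vanish by $H^1(\Q_\ell,\bfG^1)=1$ at $\ell\neq p$, and by matching Dieudonn\'e modules at $p$); conjugation by this quasi-isogeny carries $I$ onto $I'$ compatibly with the inner twistings.

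For part (3), the statement is the Rapoport--Zink uniformization theorem, and I would check its hypotheses in our setting: the supersingular locus $\calM_\K^{\rm ss}$ is non-empty by Theorem~\ref{main.2} and coincides with the basic Newton stratum, since $\nu_G(b)\in X_*(Z_G)_\Q$ by \eqref{eq:nu}; the group $\bfG_\Q$ satisfies the Hasse principle; and the level at $p$ is hyperspecial, $\K_p=\bfG(\Z_p)$, because $p$ is unramified in $B$. Since $\bfM_\K$ is Kottwitz's moduli problem of abelian schemes with a $\Z_{(p)}^\times$-polarization by \cite[Prop.~1.4.3.4]{Lan}, the theorem \cite[Theorem 6.30]{RZ} applies and presents the formal completion of $\bfM_\K$ along $\calM_\K^{\rm ss}$ as the quotient of a Rapoport--Zink formal scheme $\widehat{\mathcal M}$ by $I(\Q)$ and the prime-to-$p$ level group $\bfG(\A_f^p)/\K^p(N)$. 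Passing to reduced special fibres and then to perfections, and invoking the identification of the perfection of the reduced special fibre of $\widehat{\mathcal M}$ with the affine Deligne--Lusztig variety $X_\mu(b)$ from \cite[Corollary 7.2.16]{XZ} and \cite[Proposition 5.2.2]{HZZ} --- in which one also checks that the $J_b(\Q_p)$-action matches the action on the Rapoport--Zink tower --- yields the desired isomorphism $\Theta$.

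The step I expect to be the main obstacle is the precise matching of our moduli problem $\bfM_\K$ with the integral models used in \cite{RZ, XZ, HZZ}: reconciling the determinant (Kottwitz) condition, the $\Z_{(p)}$-integral structure, and the prime-to-$p$ level datum, and translating between the formal-scheme formulation of the uniformization theorem and its affine Deligne--Lusztig variety formulation, including the compatibility of the two $J_b(\Q_p)$-actions and the bookkeeping of geometric connected components.
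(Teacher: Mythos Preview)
Your proposal is correct and matches the paper's treatment: the paper does not supply an independent proof of this theorem but quotes it from the cited references, merely recording that the hypotheses are met (non-emptiness of $\calM_{\sf K}^{\rm ss}$ by Theorem~\ref{main.2}, identification of the supersingular locus with the basic locus via \eqref{eq:nu}, and the Hasse principle for $\bfG_\Q$), exactly as you do. Your sketch of parts (1) and (2) is in fact more detailed than anything the paper provides, but it is accurate and compatible with the argument implicit in the citations.
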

For the remainder of the paper, we fix a point $x\in \M^{\rm ss}_{\K}(k)$ and write $I$ for the associated $\Q$-group. 
 We define a $\Q$-group  $I^1$ by the exact sequence 
 \begin{equation}\label{eq:exact}
  1 \to I^1 \to I \xrightarrow{c}  \bbG_{{\rm m}, \Q} \to 1. 
 \end{equation}
% similarly to $\bfG^1$.  
Let $\Q_{>0}$ (resp. $\R_{>0}$) be the subgroup of $\Q^{\times}$  (resp. $\R^{\times}$) consisting of positive rational (resp. real) numbers. 
 \begin{lemma}\label{c(I)} 
 The  image of the homomorphism $c 
 : I(\Q) \to \Q^{\times}$ is equal to the subgroup  $\Q_{>0}$. 
\end{lemma}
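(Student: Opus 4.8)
The plan is to exploit the description of $I$ as an inner form of $\bfG_{\Q}$ together with the fact that $I(\R)$ is compact modulo center. First I would observe that since $\lambda$ is a polarization, the Rosati involution $g\mapsto g'$ is a positive involution on $\End^0_B(A)$, and hence for $g\in I(\Q)$ one has $g'g = c(g)\cdot\id$ with $c(g)$ totally positive in the center; in particular, taking reduced traces or evaluating on the real points, $c(g)\in\Q_{>0}$. This gives the inclusion $c(I(\Q))\subseteq\Q_{>0}$. (Alternatively, $I(\R)$ being compact modulo center forces $c\colon I(\R)\to\R^\times$ to land in $\R_{>0}$, and $I(\Q)\subset I(\R)$.)

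For the reverse inclusion I would argue locally and then globalize. At every finite place $\ell\neq p$ we have $I_{\Q_\ell}=\bfG_{\Q_\ell}$, and by \eqref{eq:sim} the similitude map is surjective: $c(\bfG(\Q_\ell))=\Q_\ell^\times$; at $\ell=p$ we have $I_{\Q_p}=J_b$, and since $[b]$ is basic with $J_b$ an inner form of $\bfG_{\Q_p}$, the similitude character is again surjective onto $\Q_p^\times$ (the relevant $H^1$ of the simply connected kernel $J_b^1$ over $\Q_p$ vanishes by Kneser's theorem, exactly as in the derivation of \eqref{eq:sim}). So $c\colon I(\A_f)\to\A_f^\times$ is surjective. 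It remains to pass from adelic surjectivity to the claim over $\Q$ with the sign constraint. Consider the exact sequence \eqref{eq:exact}; the obstruction to surjectivity of $c\colon I(\Q)\to\Q_{>0}$ lies in $H^1(\Q,I^1)$. Since $\bfG^1$ is semisimple and simply connected, so is its inner form $I^1$; by the Hasse principle for simply connected groups, $H^1(\Q,I^1)\hookrightarrow\prod_{v}H^1(\Q_v,I^1)$. The finite-place terms vanish by Kneser's theorem, and $I^1(\R)$ being (anisotropic) compact means $H^1(\R,I^1)$ is controlled; but more efficiently, the cokernel of $I(\Q)\xrightarrow{c}\Q^\times$ injects into $H^1(\Q,I^1)$, and an element of $\Q_{>0}$, being everywhere locally a norm/similitude factor (surjectivity at all finite places, and positivity ensures it is a similitude factor at $\R$ since $I(\R)$ is compact mod center and $c(I(\R))=\R_{>0}$), maps to a class that is everywhere locally trivial, hence trivial. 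Therefore every element of $\Q_{>0}$ is hit.

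Concretely, I would phrase the final step as: given $t\in\Q_{>0}$, the coset of $t$ in $\Q^\times/c(I(\Q))$ gives a class in $H^1(\Q,I^1)$ whose localization at each place is trivial — at finite $v$ because $c\colon I(\Q_v)\to\Q_v^\times$ is surjective (shown above), and at $v=\infty$ because $t>0$ and $c(I(\R))=\R_{>0}$ (as $I(\R)$ is compact modulo its center, the similitude factor of any element is positive, and every positive real is realized, e.g. by a central element). By the injectivity of $H^1(\Q,I^1)\to\prod_v H^1(\Q_v,I^1)$ this class vanishes, so $t\in c(I(\Q))$. Combined with the first paragraph, $c(I(\Q))=\Q_{>0}$.

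The main obstacle I anticipate is not the local surjectivity (which is a direct transcription of the argument behind \eqref{eq:sim}, using that $J_b$ and $\bfG_{\Q_p}$ have the same simply connected cover) but rather being careful about the real place: one must confirm that the positivity of $c$ on $I(\R)$ is genuinely forced — this is where the compactness-mod-center statement from Theorem~\ref{inner}(1) is essential, and it is exactly what makes $\Q_{>0}$ (rather than all of $\Q^\times$) the right answer. A clean way to see it: the Rosati involution is positive, so $c(g)=\Nrd(g)^{2/\dim}$-type positivity, or simply that $g'g$ being a positive-definite symmetric element forces its scalar part $c(g)$ to be $>0$ in $\R$.
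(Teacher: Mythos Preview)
Your argument is correct and follows essentially the same route as the paper: use the exact sequence \eqref{eq:exact}, Kneser's theorem for the simply connected group $I^1$ at finite places, the Hasse principle for $I^1$, and positivity of the Rosati involution at the archimedean place. The paper packages these ingredients slightly more economically by noting directly that $H^1(\Q,I^1)\to H^1(\R,I^1)$ is injective (since the finite-place contributions vanish by Kneser), whence $c(I(\Q))=c(I(\R))\cap\Q^\times=\R_{>0}\cap\Q^\times=\Q_{>0}$; your explicit verification of local surjectivity at each finite place is logically equivalent but not needed as a separate step.
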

\begin{proof}
   Kneser’s theorem and Hasse principle \cite[Theorems 6.4 and 6.6]{PR} show  that the natural map $H^1(\Q, I^1) \to H^1(\R, I^1)$ is injective. 
   This and the above exact sequence %\eqref{eq:exact}
   imply  that  $c(I(\Q))=c(I(\R))\cap \Q^{\times}$. 
  Moreover we have $c(I(\R))=\R_{>0}$  since the Rosati involution is a positive involution.  
\end{proof}
%\begin{align}\label{tau} 
%\tau(I^1)=1.\end{align}
%We fix a point $x \in \calM_{\K}^{\rm ss}(k)$. 
%Let $I$ be the algebraic $\Q$-group associated to $x$. 
Let $U$ (resp. $U^1$) be an open compact subgroup of $I(\A_f)$ (resp. $I^1(\A_f)$).  
Let $[g] \in I(\Q) \backslash I(\A_f)/U$ be a double coset, represented by an element $g \in I(\A_f)$. 
We write \[\Gamma_{g} \coloneqq I(\Q) \cap g^{-1}Ug.\] 
Then we have $c(\Gamma_g) \subset \Q_{>0} \cap \wh{\Z}^{\times}=\{1\}$, and hence $\Gamma_g \subset I^1(\Q)$. 
Since $I^1(\R)$ is compact, $\Gamma_g$ is finite. 
The \emph{mass of $I$ with respect to $U$} is defined by  
\[\Mass(I, U) \coloneqq \sum_{[g] \in I(\Q) \backslash I(\A_f)/U} \frac{1}{\lvert \Gamma_{g} \rvert}.  \] 
The \emph{mass of $I^1$ with respect to $U^1$} is defined similarly and denoted by $\Mass(I^1, U^1)$. 
    
\begin{lemma}\label{massI}
Let $U \subset I(\A_f)$ be an open compact subgroup, and let $U^1=U \cap I^1(\A_f)$.  
 Assume that the similitude Assume character $c : I(\A_f) \to \A_f^{\times}$ maps $U$ onto $\wh{\Z}^{\times}$. 
Then 
\begin{equation*}   \Mass \big(I,  U\big) =\Mass \big(I^1,  U^1\big).
\end{equation*}
\end{lemma}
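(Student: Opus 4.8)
The plan is to compare the two double coset spaces $I(\Q)\backslash I(\A_f)/U$ and $I^1(\Q)\backslash I^1(\A_f)/U^1$ directly, using the similitude character $c$ to control the difference. First I would set up the short exact sequence $1\to I^1\to I\xrightarrow{c}\bbG_{{\rm m},\Q}\to 1$ and its adelic points: the failure of $c$ to be surjective on $\Q$- and $\A_f$-points is measured by Galois cohomology, but the hypothesis $c(U)=\wh\Z^\times$ together with Lemma~\ref{c(I)} (which gives $c(I(\Q))=\Q_{>0}$) is exactly what is needed to kill the obstruction. Concretely, since $\Q_{>0}\cdot\wh\Z^\times=\A_f^\times$ (every finite idele is a positive rational times a unit, by unique factorization), one has $I(\Q)\cdot U\cdot I^1(\A_f)$ surjecting onto $I(\A_f)$ under the obvious sense; more precisely, every class in $I(\Q)\backslash I(\A_f)/U$ has a representative $g$ with $c(g)\in\wh\Z^\times$, and then $c(g)=c(u)$ for some $u\in U$ so after modifying $g$ by $u$ on the right we may take $c(g)=1$, i.e.\ $g\in I^1(\A_f)$. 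This shows the natural map
\[
I^1(\Q)\backslash I^1(\A_f)/U^1 \longrightarrow I(\Q)\backslash I(\A_f)/U
\]
is surjective.

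Next I would check injectivity of this map. Suppose $g_1,g_2\in I^1(\A_f)$ become equal in $I(\Q)\backslash I(\A_f)/U$, say $g_2=\gamma g_1 u$ with $\gamma\in I(\Q)$, $u\in U$. Applying $c$ gives $1=c(\gamma)c(u)$, so $c(\gamma)=c(u)^{-1}\in\Q_{>0}\cap\wh\Z^\times=\{1\}$; hence $\gamma\in I^1(\Q)$ and $u\in U^1$, which shows $g_1,g_2$ already agree in $I^1(\Q)\backslash I^1(\A_f)/U^1$. Thus the map is a bijection of finite sets.

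Finally I would match the weights. For $g\in I^1(\A_f)$ the group $\Gamma_g^{I}:=I(\Q)\cap gUg^{-1}$ computed in $I$ and the group $\Gamma_g^{I^1}:=I^1(\Q)\cap gU^1g^{-1}$ computed in $I^1$ coincide: indeed any element of $\Gamma_g^{I}$ lies in $I^1(\Q)$ by the same argument as above (its similitude factor is a positive rational lying in $\wh\Z^\times$, hence trivial — here one uses that $gUg^{-1}$ has the same image $\wh\Z^\times$ under $c$ as $U$ does, since $c(g)=1$), and it then lies in $gU^1g^{-1}$ because $gUg^{-1}\cap I^1(\A_f)=gU^1g^{-1}$. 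Therefore $|\Gamma_g^I|=|\Gamma_g^{I^1}|$ term by term, and summing over the (identified) double coset spaces yields $\Mass(I,U)=\Mass(I^1,U^1)$.

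The only genuinely delicate point is the bookkeeping that $c(g)\in\wh\Z^\times$ can be arranged \emph{simultaneously} with staying inside a single $I(\Q)$-$U$ double coset and that the stabilizer computation is insensitive to whether one works in $I$ or $I^1$; both reduce to the identity $\Q_{>0}\cap\wh\Z^\times=\{1\}$ and to $c(U)=\wh\Z^\times$, so no serious obstacle arises once Lemma~\ref{c(I)} is in hand.
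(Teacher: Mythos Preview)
Your proof is correct and follows essentially the same approach as the paper's own proof: both use Lemma~\ref{c(I)} together with $c(U)=\wh\Z^\times$ and the identity $\Q_{>0}\cap\wh\Z^\times=\{1\}$ to produce a bijection between the two double coset spaces with matching stabilizers. Your exposition is slightly more explicit (you unwind the step $\A_f^\times/\Q_{>0}\cdot\wh\Z^\times=1$ into two moves, first by $I(\Q)$ then by $U$), but the argument is the same.
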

\begin{proof}
We put $Z\coloneqq I(\Q) \backslash I(\A_f) / U$ and we  
 claim that   $Z$ can be naturally identified with the set  $I^1(\Q) \backslash I^1(\A_f) /U^1$.   
%equality \eqref{eq:sim}, and 
The assumption and Lemma \ref{c(I)}   imply  that 
 the similitude character induces the trivial map
 $Z \xrightarrow{c}   \A_f^{\times} / 
 \Q^{\times}_{>0} \cdot 
 \wh{\Z}^{\times}=1$.    
 Hence each coset of     $Z$ can be  represented by an element of $I^1(\A_f)$. 
Take $g_1, g_2 \in I^1(\A_f)$, $f \in I(\Q)$, $h \in  U$, and suppose that  $fg_1h=g_2$.  Then 
  we have $c(f) \in \Q_{>0}$, $c(h) \in \wh{\Z}^{\times}$, and  $c(f)c(h)=1$. 
  Hence  
 $c(f)=c(h)=1$, and this completes the proof of the claim. 
 The above argument for $\Gamma_g$   shows that  
 \[I(\Q) \cap g^{-1}  U g =I^1(\Q) \cap g^{-1}  U^1 g\] 
 for any  $g \in I^1(\A_f)$. 
 Thus we 
 see the assertion. 
 \end{proof}
 Now let $D_{p,\infty}$ be  the unique quaternion
$\Q$-algebra ramified precisely at $\{p,\infty\}$. 
Further, let 
$D$ be the unique quaternion
$F$-algebra such that $B\otimes_\Q D_{p,\infty}\simeq \Mat_2(D)$. 
%Equivalently, $\inv_v(D) = \inv_v((D_{p, \infty}\otimes_{\Q} F)\otimes_{F}B)$ for all $v$.  
We write $\Delta'$ for the discriminant of $D$ over $F$.  
Let $v$ denote a finite place of $F$. 
If $v \nmid p$, then we may identify  $B_v$ with $D_v \coloneqq D\otimes _{F}F_v$ and in particular we have that  $v \mid \Delta$ if and only if $v \mid \Delta'$. 
 
%Let $\bbH$ be the algebra of Hamilton's quaternions.  
% Then 
%\[I^1(\R) \simeq \{ g \in \Mat_m(\bbH)^d \mid \bar{g}^tg=1\},\]
% where  $\bar{g}$ is  the conjugate of $g \in \bbH$
 
 For each prime $\ell$,
the $\Q_{\ell}$-groups $\bfG_{\Q_{\ell}}^1$  and $I^1_{\Q_{\ell}}$ has a  decomposition   $\bfG^1_{\Q_{\ell}} =\prod_{v \mid \ell}\bfG_v^{1}$ and $I^1_{\Q_{\ell}}=\prod_{v\mid \ell}I_{v}^1$, respectively.
Similarly, we define a $\Q_p$-group $J_b^1$ as the subgroup of $J_b$ consisting of elements with trivial similitude factor:  
It has a decomposition 
$J^1_{b}=\prod_{v \mid p}J^1_{b, v}$.

Suppose $\ell \neq p$ and $v \mid \ell$. Then equalities  
  \eqref{eq:bfG},  
\eqref{eq:bfG_2}, and Proposition \ref{inner} (1) imply that    
\begin{align}\label{eq:G_v^1}
I^1_v  =\bfG^1_v ={\rm U}_{\Q_{\ell}}(V_v, \psi_v) \simeq  
   \begin{dcases*}
   \Res_{F_v/\Q_{\ell}}(\Sp_{2m, F_v})  & if $v \nmid \Delta'$; 
   \\ 
   {\rm U}_{\Q_{\ell}}(V_v, \varphi_{D_v}) & if $v \mid \Delta'$.  
   \end{dcases*}
\end{align}
Here, the pair $(V_v, \psi_v)$ is    the $\Q_{\ell}$-valued skew-Hermitian $(D_v, *)$-module of rank $m$, and  
 $(V_v, \varphi_{D_v})$ is the Hermitian $(D_v, \bar{\cdot})$-module of rank $m$ with respect to the  canonical involution  $b \mapsto \bar{b}$ on $D_v$ (Definitions \ref{def:skew} and \ref{def:quat}), 
 unique up to isomorphism in either case. 

Suppose $v \mid p$. Then Proposition \ref{inner} (1) and  
\cite[Section 4.1]{yu:IMRN-2008} imply that
%\cite[Section 2.5]{yu:mass_hb}, 
\begin{align}\label{eq:J_v^1}
I_v^1=
J_{b,v}^1 \simeq   \begin{dcases*}
\Res_{F_v/\Qp} \big( \Sp_{2m, F_v} \big)  & if $v \nmid \Delta'$; 
 \\ 
 {\rm U}_{\Q_p}(V_v, \varphi_{D_v}) & if $v \mid \Delta'$.  
\end{dcases*}
\end{align}
%where the pair  $(V_v, \varphi_{D_v})$ denotes  the unique Hermitian $(D_v, \bar{\cdot}\,)$-module of rank $m$ up to isomorphism.
%().  
%. 

%\begin{proof} 
%%If $v \mid \Delta'$, \cite{yu:mass_siegel}??.
%\end{proof}
 %We apply  formula  (\ref{geomMass})  to the group  scheme $G=I_1$. 

%Equalities \eqref{eq:G_v^1} and \eqref{eq:J_v^1} show that 
%the $\Q_{\ell}$-group $I_v^1$ is an inner form of $\Res_{F_v/\Q_{\ell}}(\Sp_{2m, F_v})$ for any  $\ell$ and $v \mid  \ell$. 

For any $v$, there is a canonical Haar measure  
on $I_v^1(\Q_{\ell})$. 
  Here we recall the construction given in \cite[Section 4]{Gross2}.
   The $\Q_{\ell}$-group $I_v^1$  is an inner form of the unramified group $\Res_{F_v/\Q_{\ell}}(\Sp_{2m, F_v})$ as  in \eqref{eq:G_v^1} and \eqref{eq:J_v^1}. 
 Let $\omega_v$ be an invariant differential of top degree on $\Res_{F_v/\Q_{\ell}}(\Sp_{2m, F_v})$ with nonzero
reduction on the special fiber of the  canonical integral model. 
We fix an inner twisting  
$f : I_v^1\to \Res_{F_v/\Q_{\ell}}(\Sp_{2m, F_v})$  over an extension of $\Q_{\ell}$. 
Then the pull-back $\omega_v^*\coloneqq f^*(\omega_v)$ is  an invariant differential form  on $I_v^1$. 
It is defined over $\Q_{\ell}$, and induces 
%(cf. \cite[Section 15, p.476]{JL}). 
a Haar measure $\lvert \omega^*_v \rvert$ on $I_v^1(\Q_{\ell})$. 

%Let $\mu$ be the Haar measure on ${\Sp}_{2m}(F_v)$ which gives a hyperspecial subgroup  $\Sp_{2m}(O_{F_v})$ volume one.  

Further, let $M_v^{\vee}(1)$ be the twisted dual of the motive of Artin-Tate type associated to $I_v^1$, and $L(M_v^{\vee}(1))$ be the local $L$-factor \cite[(1.6) and (5.1)]{Gross2}. 
For any parahoric subgroup  $U_v^1$ of $I_v^1(\Q_{\ell})$,   we put 
\begin{equation}\label{lpKp}
  \lambda_v(U_v^1) \coloneqq  \Bigg( L(M_v^{\vee}(1)) \cdot 
  \int_{U_v^1} 
 \lvert \omega_v^* \rvert \Bigg) ^{-1}.
\end{equation}
When $U_v^1$ is hyperspecial, we have that $\lambda_v(U_v^1)=1$ by \cite[Proposition 4.7]{Gross2}. 
In the next subsection we will give a description of $\lambda_v(U_v^1)$ for maximal parahoric subgroups. 
 \begin{prop}\label{mass}
Let $U^1=\prod_{v}U_{v}^1$ be an open compact subgroup of $I^1(\A_f)$ such that $U_v^1$ is a parahoric subgroup of $I_v^1(\Q_{\ell})$, and let $S$  be the finite set of finite places where $U_v^1$ is not hyperspecial. 
Then \begin{align*}
     \Mass \big(I^1,  U^1\big)
 = 
\frac{(-1)^{dm(m+1)/2}}{2^{md}}
\cdot \prod_{j=1}^{m}
\zeta_F(1-2j) \cdot \prod_{v\in S}\lambda_v(U_v^1)
\end{align*}
where $\zeta_F(s)$ is the Dedekind zeta function
 of $F$. 
 \end{prop}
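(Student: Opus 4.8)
The plan is to compute $\Mass(I^1, U^1)$ by relating it to a known mass formula for the quaternion unitary similitude group over $F$, and then to isolate the local correction factors at the places in $S$. First I would observe that the group $I^1$ is an inner form of $\bfG^1 = {\rm U}_\Q(V,\psi)$, which by Morita equivalence at the split places and by \eqref{eq:gu}, \eqref{eq:G_v^1}, \eqref{eq:J_v^1} is, place by place, an inner form of $\Res_{F/\Q}(\Sp_{2m,F})$. The Tamagawa number of $\Sp_{2m}$ over $F$ is $1$ (it is semisimple and simply connected), and Tamagawa numbers are invariant under inner twisting; hence $\tau(I^1) = 1$ as well. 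I would therefore invoke the standard expression of the mass of $I^1$ relative to $U^1 = \prod_v U_v^1$ as a product of local volume factors times $\tau(I^1)$, following the formalism of Gross \cite{Gross2} with the canonical measures $\lvert\omega_v^*\rvert$ introduced above: concretely,
\begin{equation*}
\Mass(I^1, U^1) = \tau(I^1) \cdot \lvert L(M^\vee(1))\rvert^{-1} \cdot \prod_v \Big(L(M_v^\vee(1)) \int_{U_v^1}\lvert\omega_v^*\rvert\Big)^{-1} \cdot \big(\text{archimedean factor}\big),
\end{equation*}
where $L(M^\vee(1))$ is the global $L$-value attached to the motive of $I^1$, which for $\Res_{F/\Q}\Sp_{2m,F}$ is $\prod_{j=1}^m \zeta_F(1-2j)$ up to the archimedean $\Gamma$- and power-of-$2$ contributions.

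Next I would evaluate the archimedean and the unramified-place contributions. At every finite place $v\notin S$ the group $U_v^1$ is hyperspecial, so $\lambda_v(U_v^1) = 1$ by \cite[Proposition~4.7]{Gross2}; thus the only finite-place corrections come from $v \in S$, contributing $\prod_{v\in S}\lambda_v(U_v^1)$. The archimedean factor, combined with the functional-equation-type normalization of $L(M^\vee(1))$, produces exactly the closed-form constant: for $\Res_{F/\Q}\Sp_{2m,F}$ the motive $M$ has Artin-Tate pieces of weights $2, 4, \ldots, 2m$ over each real place of $F$, and a standard computation (as in Gross's examples, or Shimura's and Hashimoto-Ibukiyama's mass formulas in the Hilbert-Siegel case treated in \cite{yu:mass_hb, yu:mass_siegel}) gives the factor $\frac{(-1)^{dm(m+1)/2}}{2^{md}}\prod_{j=1}^m\zeta_F(1-2j)$. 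I would assemble these pieces to conclude
\begin{equation*}
\Mass(I^1, U^1) = \frac{(-1)^{dm(m+1)/2}}{2^{md}}\cdot\prod_{j=1}^m\zeta_F(1-2j)\cdot\prod_{v\in S}\lambda_v(U_v^1).
\end{equation*}

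I expect the main obstacle to be the bookkeeping of normalizations: matching the canonical measure $\lvert\omega_v^*\rvert$ (defined via an integral model of $\Res_{F_v/\Q_\ell}\Sp_{2m,F_v}$) against the Tamagawa measure, and verifying that the global $L$-value $L(M^\vee(1))$ together with the archimedean $\Gamma$-factors and the power of $2$ collapses to precisely $\frac{(-1)^{dm(m+1)/2}}{2^{md}}\prod_{j=1}^m\zeta_F(1-2j)$, with the sign coming out correctly. This is where I would lean most heavily on the precedent in \cite{yu:mass_hb}, since the split-place Morita reduction makes $I^1$ locally indistinguishable from the group appearing in the Hilbert-Siegel mass formula except at the places dividing $\Delta'$, which are exactly the places in $S$ whose contribution is packaged into $\lambda_v(U_v^1)$. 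A secondary point requiring care is confirming that $U_v^1$ is genuinely parahoric at every $v$ (so that $\lambda_v(U_v^1)$ is defined via \eqref{lpKp}) and that at the places $v\mid p$ the parahoric $J_{b,v}^1\cap U_v^1$ is hyperspecial under our running assumption that $p$ is unramified in $B$, so that those places do not contribute to $S$ either.
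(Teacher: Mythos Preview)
Your plan is correct in outline but takes a much longer route than the paper. The paper's proof is two sentences: it directly invokes the mass formula of Gan--Hanke--Yu \cite[Proposition~2.13 and Section~9]{GHY}, which already packages the Tamagawa-number argument, Gross's canonical measures, and the archimedean computation into the closed form
\[
\Mass(I^1,U^1)=\frac{1}{2^{md}}\Big|\prod_{j=1}^m\zeta_F(1-2j)\Big|\cdot\prod_{v\in S}\lambda_v(U_v^1),
\]
and then observes via the functional equation (as in \cite[Section~2.2]{yu:mass_hb}) that the sign of $\prod_j\zeta_F(1-2j)$ is $(-1)^{dm(m+1)/2}$. Your plan is essentially a sketch of how one would \emph{prove} the GHY formula itself for this particular group, which is fine but unnecessary here: GHY applies verbatim once you note that $I^1$ is an inner form of $\Res_{F/\Q}\Sp_{2m,F}$, and the motive and $L$-value are unchanged under inner twisting. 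The displayed formula in your first paragraph is also slightly garbled (the global $L$-value and the local product are not independent factors), though the intent is clear.

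Your final paragraph contains a confusion worth flagging: the hypothesis of the proposition already assumes each $U_v^1$ is parahoric, so there is nothing to verify there; and the proposition makes no claim about which places lie in $S$. In particular, whether places $v\mid p$ contribute to $S$ depends entirely on the choice of $U^1$ in the application (and in the paper's application, places $v\mid p$ with $v\mid\Delta'$ \emph{do} lie in $S$). That discussion belongs to the proof of Theorem~\ref{main.3}, not here.
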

 \begin{proof}
We apply the mass formula of  Gan, Hanke, and J.-K. Yu \cite[Proposition 2.13 and Section 9]{GHY} to the $\Q$-group  $I^1$ and we have that\footnote{In  {\it{loc.~cit.}}, the symbol of absolute value is missing.}
 \begin{equation*}
  \Mass \big(I^1,  U^1\big)
 = 
  \frac{1}{2^{md}} \cdot \left\lvert \prod_{j=1}^m\zeta_{F}(1-2j) \right\rvert 
  \cdot \prod_{v\in S}\lambda_v(U_v^1).
 \end{equation*}
 Moreover, the functional equation (cf. \cite[Section 2.2]{yu:mass_hb}) shows the product $\prod_{j=1}^m\zeta_{F}(1-2j)$ has sign 
 ${(-1)^{dm(m+1)/2}}$.
 \end{proof}
 \subsection{Stabilizers of lattices} 
 First we assume $v \mid \Delta'$, that is,  $D_v$  is the division algebra.  
 Let $(V_v, \varphi_{D_v})$ be  the unique Hermitian  $(D_v, \bar{\cdot})$-module of rank $m$ up to isomorphism (Definition \ref{def:quat}). 
 We recall some facts about the stabilizers of $O_{D_v}$-lattices in $(V_v, \varphi_{D_v})$.  
Let $c$ be an integer such that $0 \leq c \leq \lfloor m/2 \rfloor$.  
We define  an $O_{D_v}$-lattice $L_c$ in $(V_v, \varphi_{D_v})$ by 
\begin{align}\label{eq:L_c}
    L_c \coloneqq 
    \begin{cases*}
      H(-1)^{c} \oplus  H(0)^{m/2-c} & if $m$ is even; 
      \\ 
      H(-1)^{c} \oplus H(0)^{(m-1)/2-c} \oplus (1) & if $m$ is odd, 
    \end{cases*}
\end{align}
where $H(i)$ denotes a hyperbolic plane; see Section \ref{ss:nsp}. 
%We note that $L_0$ is self-dual, and any self-dual Hermitian $(O_{B_v}, \bar{\cdot})$-lattice of rank $m$ is isomorphic to $L_0$. 

We further define subgroups   
  \begin{align}\label{eq:P_c}
 P_c\subset \GU_{\Q_{\ell}}(V_v, \varphi_{D_v})(\Q_p), \quad  P^1_{c}\subset {\rm U}_{\Q_{\ell}}(V_v, \varphi_{D_v})(\Q_{\ell})
 \end{align}
 as the stabilizers of $L_c$.  
%\begin{equation}\label{eq:P^1_c}
%\calP_c\coloneqq \Stab \calL_c, \quad 
 %    \calP^1_{c} \coloneqq \Stab  \calL_c.
%\end{equation}
Then $P_c$ is a maximal parahoric subgroup of $\GU_{\Q_{\ell}}(V_v, \varphi_{D_v}) (\Q_{\ell})$, and  any maximal parahoric subgroup of $\GU_{\Q_{\ell}}(V_v, \varphi_{D_v}) (\Q_{\ell})$ is conjugate to $P_c$ for some $0 \leq c \leq \lfloor m/2 \rfloor$. 
 Similar statements hold true for the subgroups $P_c^1$ (cf. \cite[Theorem 3.13]{PR} or \cite{Tits}). 

 Let $\underline{P^1_c}$ be the smooth model of $P^1_c$ over $\Z_{\ell}$, and $\overline{P_c^1}$ the maximal reductive quotient  of the special fiber $\underline{P_c^1}\otimes_{\Z_{\ell}} \F_{{\ell}}$. 
 Then 
\begin{align}\label{eq:rdt}
\overline{P^1_{c}}= 
   % \Res_{\F_{q_v}/\F_p}\Sp_{2m, \F_{q_v}} & if $v \nmid \Delta'$; 
   % \\ 
    \Res_{\F_{q_v^2}/\F_{\ell}} \big( \Sp_{2c, \F_{q_v^2}} \big)  \times  \Res_{\F_{q_v}/\F_{\ell}}\big(  {\rm U}_{m-2c, \F_{q_v}} \big). 
\end{align}
Here,  $\F_{q_v}$ denotes   the residue field of $O_{F_v}$ and  $\F_{q_v^2}$ denotes its  quadratic extension. 
Further, ${\rm U}_{n, \F_{q_v}}$ denotes  the unitary group in $n$ variables over $\F_{q_v}$ (cf. \cite[Lemma 3.5.2]{Harashita}). 
From \cite[(2.6) and (2.12)]{GHY}, it follows that 
%using Iwahori subgroups (see also  \cite[(6.14)]{TY2}), 
    \begin{align*}
 % \label{vol}
 \lambda_v(P^1)
 %\Bigg( p^{\dim \bfG^1{v, \F_p}} \cdot  \lvert \bfG^1_v(\F_p) \rvert^{-1} \cdot  \int_{{P_{v,c}^1}} 
 %%\lvert
 % C^*_{I^1_{\Q_p}}
 %\rvert \Bigg)^{-1}
 = 
\frac{
{p}^{-N(\Res_{\F_{q_v}/\F_{\ell}} (\Sp_{2m, \F_{q_v}}))}\cdot \lvert  \Sp_{2m}(\F_{q_v})  \rvert }{ 
{p}^{-N(\overline{{P}^1})} \cdot \lvert  \overline{{P}^1}(\F_{{\ell}}) \rvert 
},  
\end{align*}
where $N(G)$  denotes the number of  positive roots of an $\F_{\ell}$-group $G$. 
Moreover, for any extension $\F_q/\F_{\ell}$ of degree $f$, we have (\cite[Chapter 1]{Carter})   
\begin{align*}
   N(\Res_{\F_{q}/\F_{\ell}}(\Sp_{2n, \F_{q}})) & = f  n^2, & 
   \lvert \Sp_{2n}(\F_{q}) \rvert &=q^{ n^2}\prod_{i=1}^n(q^{2i}-1), 
   \\ 
   N(\Res_{\F_q / \F_{\ell}}({\rm U}_{n, \F_q})) & = f  n(n-1)/2, & 
\lvert {\rm U}_{n}(\F_{q}) \rvert & =
q^{n(n-1)/2}\prod_{i=1}^n(q^i-(-1)^i). 
\end{align*}
  These formulas and \eqref{eq:rdt} show that for $0 \leq c \leq \lfloor m/2 \rfloor$ 
\begin{align}
  \label{eq:lambda}
\lambda_v(P^1_{ c}) = \kappa_v(m,c)^{-1} \cdot 
\prod_{i=1}^m (q_v^{2i}-1), \quad \kappa_v(m, c) \coloneqq 
\prod_{i=1}^c(q_v^{4i}-1)\cdot \prod_{i=1}^{m-2c}(q_v^i-(-1)^i).
%\\
%& =
%\prod_{i=1}^{m-2c} (q_v^i+(-1)^i)
%\cdot 
%\prod_{i=1}^c (q_v^{4i-2}-1)
%\cdot 
%\frac{\prod_{i=1}^m (q_v^{2i}-1)}{
%\prod_{i=1}^{2c}(q_v^{2i}-1) \cdot 
%\prod_{i=1}^{m-2c}
%(q_v^{2i}-1)
%}.
\end{align}
We remark that this rational function of $q_v$ is in fact  a polynomial with integer coefficients \cite[Lemma 3.2]{IKY}.
 % 
%Moreover, the minimal degree occurs precisely when $c=0$ if $m$ is odd and when $c=m/2$ if $m$ is even.  This implies  the following:
\begin{lemma}\label{mphr} 
A parahoric subgroup $P_c^1$ of ${\rm U}_{\Q_{\ell}}(V_v, \varphi_{B_v})(\Q_{\ell})$ has the maximum volume precisely when $c=0$ if $m$ is odd,  and when $c=m/2$ if $m$ is even. 
\end{lemma}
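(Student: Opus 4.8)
The plan is to reduce the assertion to an elementary extremal problem for the polynomials $\kappa_v(m,c)$ appearing in \eqref{eq:lambda}. By the definition \eqref{lpKp} of $\lambda_v(P_c^1)$ and by \eqref{eq:lambda},
\[
\int_{P_c^1}\lvert\omega_v^*\rvert=\bigl(L(M_v^\vee(1))\,\lambda_v(P_c^1)\bigr)^{-1}=\frac{\kappa_v(m,c)}{L(M_v^\vee(1))\cdot\prod_{i=1}^m(q_v^{2i}-1)},
\]
and the denominator on the right is a positive quantity not depending on $c$, since the motive $M_v$ is attached to the group $I_v^1$ and is insensitive to the choice of parahoric. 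Hence $P_c^1$ has maximal volume exactly when $\kappa_v(m,c)=\prod_{i=1}^c(q_v^{4i}-1)\cdot\prod_{i=1}^{m-2c}(q_v^i-(-1)^i)$ is maximal over $0\le c\le\lfloor m/2\rfloor$, and the word ``precisely'' in the statement amounts to the strictness of this maximum.

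The first step is to show that the finite sequence $c\mapsto\kappa_v(m,c)$ is strictly log-convex. Writing $q=q_v$ (so that $q\ge 3$, since $p>2$), a short computation yields, for $1\le c\le\lfloor m/2\rfloor$,
\[
\frac{\kappa_v(m,c)}{\kappa_v(m,c-1)}=\frac{q^{4c}-1}{\bigl(q^{m-2c+1}+(-1)^m\bigr)\bigl(q^{m-2c+2}-(-1)^m\bigr)}.
\]
Throughout this range both factors of the denominator are positive; as $c$ increases the numerator strictly increases while both denominator factors strictly decrease (their exponents drop by $2$), so this ratio is strictly increasing in $c$. A strictly log-convex sequence on $\{0,1,\dots,\lfloor m/2\rfloor\}$ attains its maximum at one of the two endpoints $c=0$ or $c=\lfloor m/2\rfloor$ and at no interior point, so it remains only to compare these two values.

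The last step compares the two endpoints by pairing factors. Grouping the factors of $\kappa_v(m,0)=\prod_{i=1}^m(q^i-(-1)^i)$ into consecutive pairs $(q^{2j-1}+1)(q^{2j}-1)$ and using $q^{4j}-1=(q^{2j}-1)(q^{2j}+1)$ together with $q^{2j-1}+1<q^{2j}+1$ gives, when $m=2\ell$ is even, $\kappa_v(m,0)<\prod_{j=1}^{\ell}(q^{4j}-1)=\kappa_v(m,m/2)$; when $m=2\ell+1$ is odd, factoring off the common $(q+1)$ and pairing the remaining factors as $(q^{2j}-1)(q^{2j+1}+1)$ gives $\kappa_v(m,(m-1)/2)=(q+1)\prod_{j=1}^{\ell}(q^{4j}-1)<\kappa_v(m,0)$. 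Combined with the log-convexity, $\kappa_v(m,\cdot)$ attains its maximum uniquely at $c=m/2$ when $m$ is even and uniquely at $c=0$ when $m$ is odd, which is the lemma. I expect no genuine obstacle here: the work is entirely bookkeeping --- normalizing ``volume'' against the $c$-independent factors in \eqref{lpKp}--\eqref{eq:lambda}, and tracking the parity of $m$ in the ratio computation and the pairing argument --- and the hypothesis $q_v\ge 3$ makes all the required inequalities automatic; the one genuinely useful observation is that the mass-theoretic quantities assemble into a log-convex sequence, which collapses the optimization to a two-point comparison.
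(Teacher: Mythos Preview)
Your proof is correct and takes a genuinely different route from the paper's. The paper argues by induction on $m$ in steps of two: assuming the claim for $m$, it computes the two ratios $\kappa_v(m+2,c)/\kappa_v(m,c)$ and $\kappa_v(m+2,c+1)/\kappa_v(m,c)$, observes where each is maximized as a function of $c$, and then compares the two resulting candidate maxima. Your argument instead fixes $m$ and shows directly that $c\mapsto\kappa_v(m,c)$ is strictly log-convex via the ratio $\kappa_v(m,c)/\kappa_v(m,c-1)$, reducing everything to a single endpoint comparison. Your approach is cleaner and more conceptual: the log-convexity observation explains \emph{why} the maximum must sit at an endpoint, whereas the paper's induction verifies it without isolating this structural reason. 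The paper's induction, on the other hand, avoids having to pair factors carefully in the endpoint comparison (the comparison of $A$ and $B$ there is between two explicit quadratic expressions). One small remark: your appeal to $q_v\ge 3$ is harmless but unnecessary---every inequality you use already holds for $q_v\ge 2$.
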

\begin{proof}
It suffices to show that $\lambda_v(P_c^1)$ is the minimum, that is, $\kappa_v(m,c)$ is the maximum at $c=0$ or $m/2$ according as $m$ is odd or even. 
We give a proof by induction on $m$. 
A short computation shows  the statement holds  for $m=1, 2$. 
We show that if  the statement is true for $m$  then it is also true for $m+2$. 
By \eqref{eq:lambda} we have that
\begin{align*}
%\begin{split}\label{eq:kmc}
   \kappa_v(m+2, c)/\kappa_v(m, c)
& =(q_v^{m+2-2c}-(-1)^{m+2-2c})(q_v^{m+1-2c}-(-1)^{m+1-2c}), 
    \\
   \kappa_v(m+2, c+1)/\kappa_v(m, c)
    & = q_v^{4(c+1)}-1.      
%\end{split}
    \end{align*}
These  functions of $c$ are the maximum at $c=0$ and $c=\lfloor m/2 \rfloor$, respectively. 
Furthermore, the maximum values are 
\begin{alignat*}{2}
  A& \coloneqq {} & \kappa_v(m+2, 0)/\kappa_v(m, 0) &  =\begin{cases*}
        (q_v^{m+2}+1)(q_v^{m+1}-1) & if $m$ is odd;
        \\
        (q_v^{m+2}-1)(q_v^{m+1}+1) & if $m$ is even.  
    \end{cases*}
    \\ 
B& \coloneqq {} &  \kappa_v(m+2, \lfloor m/2 \rfloor+1)/\kappa_v(m, \lfloor m/2 \rfloor) & =
       \begin{cases*}
        (q_v^{m+1}+1)(q_v^{m+1}-1) & 
        if $m$ is odd; 
        \\
        (q_v^{m+2}+1)(q_v^{m+2}-1) & if $m$ is even.  
    \end{cases*}\label{eq:c+1}
    \end{alignat*}
We see that   $A>B$ (resp. $B>A$) if $m$ is odd (resp. even). 
These facts and the induction hypothesis imply the assertion. 
\end{proof}
\begin{proof}[\bf Proof of Theorem~\ref{main.3}]
We write    
   $J_b(Y)$ for the stabilizer in $J_b(\Q_p)$ of an irreducible component $Y$ of  the affine Deligne-Lusztig variety $X_{\mu}(b)$. 
We fix identifications  ${\bf G}(\A_f^p)=I(\A_f^p)$ and ${J_b} (\Q_p)={I}(\Q_p)$ as in Theorem \ref{inner} (1), and we regard $J_{b}(Y) \bfG(\wh{\Z}^p)$ as a subgroup of $I(\A_f)$. 

The action of $J_b(\Q_p)$ on the set $\Irr(X_{\mu}(b))$ induces  a bijection 
\[  
\coprod_{ [Y] \in  J_b(\Q_p)\backslash {\Irr} (X_{\mu}(b))}  J_b(\Q_p)/J_{b}(Y)\xrightarrow{\sim} {\Irr}(X_{\mu}(b)).\]  
  Moreover,  the isomorphism  $\Theta$ in Theorem \ref{inner} (3)  induces  a bijection 
    %(see \cite[Theorem A]{HZZ}) 
 \begin{align}\label{eq:irr}
 \coprod_{[{Y}] \in J_b(\Q_p)\backslash {\rm Irr}(X_{\mu}(b))}
 I(\Q) \backslash {I}(\A_f)/  J_{b}(Y){\K}^p(N) \xrightarrow{\sim} \Irr(\mathcal M_{{\K}}^{\rm ss}).
 \end{align}
The assumption $N \geq 3$ implies  $I(\Q) \cap (g^{-1}  J_{b}(Y) \K^p(N)   g)=1$ for any $g \in I(\A_f)$ (cf. \cite[Lemma, p. 207]{mumford:av}). 
%Moreover, the assumption $p \nmid N$ shows $[\bfG(\wh{\Z}^p) : \K^p(N)]=\lvert \bfG(\Z/N\Z) \rvert$. 
Hence  we have that 
 \begin{align}\label{eq:level}
 \begin{split}
   \lvert  I(\Q) \backslash {I}(\A_f)/   J_{b}(Y){\K}^p(N) \rvert & = \Mass(I, J_b(Y){\K}^p(N)) 
   \\  
  &=
    \Mass(I,  J_{b}(Y) \bfG(\wh{\Z}^p))
    \cdot \lvert \bfG(\Z/N\Z) \rvert.
  \end{split}
   \end{align}
   
    The open compact subgroup $J_{b}(Y) \bfG(\wh{\Z}^p) \subset I(\A_f)$ satisfies the assumption in Lemma \ref{massI}.  
In fact, descriptions in \eqref{eq:G(Z)}, \eqref{eq:J(Y)}, and Lemmas \ref{c_sym}, \ref{c} show that  for any   $r \in \Z_{\ell}^{\times}$ with $\ell \neq p$ (resp. $\ell=p$)   there exists  an element $g \in \bfG(\Z_{\ell})$ (resp. $g \in J_b(Y)$) such that $c(g)=r$. 
Therefore, if  we write  $J_b^1(Y) \coloneqq J_b(Y) \cap J_b^1(\Q_p)$, then 
\begin{equation}\label{eq:I^1}
 \Mass(I,  J_{b}(Y) \bfG(\wh{\Z}^p))=\Mass(I^1,  J^1_{b}(Y) \bfG^1(\wh{\Z}^p)).  
\end{equation}

We apply Proposition \ref{mass} to the subgroup $J^1_{b}(Y) \bfG^1(\wh{\Z}^p)$. 
Suppose first that $\ell \neq p$  and $v \mid \ell$. 
We identify $B_v$ with $D_v$. 
The $\Q_{\ell}$-valued skew-Hermitian $O_{D_v}$-lattice  $\Lambda_v \coloneqq \Lambda \otimes_{O_{F}}O_{F_v}$ is self-dual by the assumption.  
Let  $\bfG_v^1(\Z_{\ell})$ be the stabilizer of  $\Lambda_v$ in $\bfG_v^1(\Q_{\ell})={\rm U}_{\Q_{\ell}}(V_v, \psi_v)(\Q_{\ell})$, so that  
 $\bfG^1(\wh{\Z}^p)=\prod_{\ell \neq p, v \mid \ell}\bfG_v^1(\Z_{\ell})$. 
 When $v \nmid \Delta'$, 
  one can attach to $(\Lambda_v, \psi_{v})$ a self-dual 
$O_
{F_v}$-lattice in the symplectic $F$-space of dimension $2m$ as in Section \ref{sec:LC.2}, and  
this induces an equality   $\bfG_v^1(\Z_{\ell}) = \Sp_{2m}(O_{F_v})$ under the identification in \eqref{eq:G_v^1}. 
Suppose  $v \mid \Delta'$, and let  $(\Lambda_v, \varphi_{D_v})$ be  the associated  Hermitian $(O_{D_v}, \bar{\cdot}\,)$-lattice  of rank $m$. 
By \eqref{eq:LC.15},  we have that  
\[\Lambda_v=\Lambda_v^{\vee, \psi_v}=\Pi^{i} \cdot \Lambda_v^{\vee, \varphi_{D_v}}, \quad  i \coloneqq \ord_{\Pi_{v}}(\gamma)-1.\] 
where $\gamma$ is the element in $B^{\times}$ defined as in \eqref{eq:gamma} and $\ord_{\Pi_v}(\, \cdot \,)$ is the $\Pi_v$-adic valuation for a uniformizer $\Pi_v$ of $D_v$.   
As in Lemma \ref{quatclass}, such a lattice $\Lambda_v$ exists if and only if either $m$ or $i$ is even. 
Further, one has that $\Lambda_v \simeq H(i)^{m/2}$ or $H(i)^{(m-1)/2} \oplus (\Pi^i_v)$, according as $m$ is even or odd. 
Note that 
the stabilizer of a lattice  remains unchanged when the lattice  is multiplied by a power of  $\Pi_v$. 
These facts, \eqref{eq:L_c}, and \eqref{eq:P_c} imply that, when $v \mid \Delta'$,  we have 
 $\bfG^1_v(\Z_{\ell}) =  P^1_0$ or $P^1_{m/2}$ according as $\ord_{\Pi_v}(\gamma)$ is odd or even.  
In summary, for each $\ell \neq p$ and $v \mid  \ell$,  we have 
\begin{equation}
\label{eq:G(Z)}\bfG_v^1(\Z_{\ell}) = 
\begin{cases*}
\Sp_{2m}(O_{F_v}) 
& if $v \nmid \Delta'$; 
\\ 
P_0^1 & if  $v \mid \Delta'$ and $\ord_{\Pi_v}(\gamma)$ is odd; 
\\
P^1_{m/2} & if  $v \mid \Delta'$ and $\ord_{\Pi_v}(\gamma)$ is even. 
\end{cases*}
\end{equation}
In particular,  if $v \nmid \Delta'$ then $\bfG_v^1(\Z_{\ell})$ is a hyperspecial parahoric subgroup of  $\bfG_v^1(\Q_{\ell})$. 
%We note that the rank $m$ is always even in the last case; see Proposition \ref{prop:herm-self-dual}. 

Next we consider the case that $\ell=p$. 
%we  take an irreducible component $Y$ of  the affine Deligne-Lusztig variety $X_{\mu}(b)$, and write   
  % $J_b(Y)$ for its stabilizer in $J_b(\Q_p)$. 
   By the results of 
   He-Zhou-Zhu  \cite[Theorem 4.1.2 and Proposition 2.2.5]{HZZ} and Nie \cite{Nie}, 
   the stabilizer   $J_b(Y)$ of a $Y \in {\rm Irr}(X_{\mu}(b))$  in $J_b(\Q_p)$ is a parahoric subgroup, and has  the maximum volume among all the parahoric subgroups of $J_b(\Q_p)$. 
  Note that a  hyperspecial subgroup, if it exists, has the maximum volume.
%We write  $J_b^1(Y) \coloneqq J_b(Y) \cap J_b^1(\Q_p)$. 
The subgroup $J_b^1(Y)$ consisting elements with trivial similitude factor has a decomposition  $J_b^1(Y) = \prod_{v\mid p} 
J^{1}_{b, v}(Y)$ where $J^{1}_{b, v}(Y)\subset J_{b,v}^1(\Q_p)$ is a parahoric subgroup with the maximum volume. 
As in \eqref{eq:J_v^1}, we identify $J_{b,v}^1(\Q_p)$ with $\Sp_{2m}(F_v)$ or ${\rm U}_{\Q_p}(V_v, \varphi_{D_v})(\Q_p)$ according as  $v \nmid \Delta'$ or $v \mid \Delta'$. 
 From Lemma \ref{mphr} it follows that for $v \mid p$
\begin{align}\label{eq:J(Y)}
J^{1}_{b, v}(Y) \sim_{\text{conj}} \begin{dcases*}
\Sp_{2m}(O_{F_v}) & if $v \nmid \Delta'$; 
\\ 
P^1_{0} 
& 
if $v \mid \Delta'$ and $m$ is odd; 
\\ 
P^1_{m/2} & if $v \mid \Delta'$ and  $m$ is even.
\end{dcases*}
\end{align}
%Here, we use the  notation  $P_c$ to denote the stabilizer of a Hermitian lattice $L_c$ as in \eqref{eq:P_c}, replacing $(B_v, V_v)$ by $(D_v, V_v')$. 
 
Finally, suppose  $v \mid \Delta'$ and 
let $\lambda_v(\, \cdot \,)$ denote  the reciprocal of a volume  as in \eqref{lpKp}. 
From   \eqref{eq:lambda}--\eqref{eq:J(Y)} it follows  that 
\begin{align} \label{eq:lambda_ell} 
\begin{split}
\lambda_v(\bfG^1_v(\Z_{\ell})) & = 
\begin{dcases*}
\prod_{i=1}^m (q_v^i+(-1)^i)  & if $v \mid \ell$, $\ell \neq p$, and $\ord_{\Pi_v}(\gamma)$ is odd;
\\ 
\prod_{i=1}^{m/2} (q_v^{4i-2} -1) & if $v \mid \ell$, $\ell \neq  p$, and $\ord_{\Pi_v}(\gamma)$ is even, 
\end{dcases*}
\\  \lambda_v(J^{1}_{b, v}(Y)) & =      \begin{dcases*}
\prod_{i=1}^m (q_v^i+(-1)^i) & 
        if $v \mid p$ and $m$ is odd; 
        \\ 
\prod_{i=1}^{m/2} (q_v^{4i-2} -1) & 
if $v \mid p$ and $m$ is even.     \end{dcases*}
     \end{split}
     \end{align}
     
 Propositions \ref{prop:ADLV}, \ref{mass}, and 
equalities  \eqref{eq:irr}--\eqref{eq:lambda_ell}   
%, \eqref{eq:irr}, \eqref{eq:level}, \eqref{eq:I^1},
 imply Theorem \ref{main.3}.
\end{proof}
 
%Let $U_{1, p} \subset  J_1(\Q_p)=I_1(\Q_p)$ be a parahoric subgroup of  $I_1(\Q_p)$. 
% Let  $\underline{U}_{1, p}$ be the smooth  model of $U_{1, p}$ over $\Z_p$,  $\overline{U}_{1, p}$ be the maximal reductive quotient of  $\underline{U}_{1, p} \otimes \F_p$, and $N(\overline{U}_{1, p})$ be  the number of positive roots of $\overline{U}_{1, p}$. 
  
%The mass of $I_1$ relative to the open compact subgroup $U_1=U_{1,p} \cdot I_1(\widehat{\Z}^p) \subset I_1(\A_f)$ is 
%\[ \Mass(I_1, U_1)=  \Mass(\Lambda)  \cdot \frac{\kappa_p}{p^{-N(\overline{U}_{1, p})}
%\cdot \# \overline{U}_{1, p}(\F_p)}\]
%  where $\Mass(\Lambda)$ is the mass of the quaternionic hermitian lattice $(\Lambda, \varphi)$, and $\kappa_p$ is given by 

%\begin{thm}
%We write $\zeta_F(s)$ for  the Dedekind zeta function. 
%The number of irreducible components of the supersingular locus  $\mathcal M^{\rm ss}$ is 
%\begin{align*}
%\lvert G(\mathbb Z/N\mathbb Z)\rvert  \cdot \prod_{v \mid p}
%\begin{pmatrix}
%f_{v}
%\\ 
%\lfloor f_{v}/2 \rfloor 
%\end{pmatrix}^n 
%\cdot 
%\frac{(-1)^{dm(m+1)/2}}{2^{md}}
%\cdot \prod_{j=1}^{m}
%\left[ \zeta_F(1-2j)\prod_{v \mid \Delta'}(q_v^j+(-1)^j). \right]
%\end{align*}
%\end{thm}
%\subsection{Proof of Theorem \ref{main.3}}\label{ss:mass}

  \begin{remark}[Correction to ``An exact geometric mass formula'']\label{rem:correction}
    In 
       \cite[(4.1)]{yu:IMRN-2008}, 
    a description of the stabilizer $\bfG^1_v(\Z_{\ell})\subset \bfG_v^1(\Q_{\ell})$ of a self-dual local lattice $\Lambda_v$    contains an error: If $\ell \neq p$, $v\mid \ell$ and  $v \mid \Delta'$,  then it is claimed in {\it loc. cit.} that  $\bfG_v^1(\Z_{\ell})$ is always identified with $P_0^1$. The correct description is in \eqref{eq:G(Z)}.    
Hence we also correct the formula in \cite[Theorem 1.3]{yu:IMRN-2008} for local terms at places $v \nmid p$ with $v \mid \Delta'$:    
       The cardinality of the superspecial locus 
  $\calM^{\rm sp}_{\sf K}$  
  is equal to 
\begin{align}\label{eq:mass_sp}
\lvert \bfG(\mathbb Z/N\mathbb Z)\rvert  
\cdot 
\frac{(-1)^{dm(m+1)/2}}{2^{md}}
\cdot \prod_{j=1}^{m}
\zeta_F(1-2j) \cdot \prod_{v \mid p \text{ or } v \mid \Delta'}\lambda'_v,
\end{align}
where $\lambda'_v$ for a place $v \mid p$ or $v \mid \Delta'$ is given by 
 \begin{align}
     \lambda'_v= 
     \begin{dcases*}
        \prod_{i=1}^m (q_v^i+1) & if $v \mid p$ and $v \nmid \Delta'$; 
        \\ 
        \prod_{i=1}^{m/2} (q_v^{4i-2} -1) & 
if  $v \nmid p$ and {$\ord_{\Pi_v}(\gamma)$ is even (and $m$ is even)}; \\ 
        \prod_{i=1}^m (q_v^i+(-1)^i) & otherwise. 
     \end{dcases*}
 \end{align}
  \end{remark}

\appendix

\section{Bad reduction of Shimura Curves: counting irreducible components of special fibers}\label{appx}
We consider the moduli schemes of type C for the case $m=1$ and $d=1$ (that is, $F=\Q$). These are called (variants of) Shimura curves. 
As before 
 $B$ denotes an indefinite quaternion $\Q$-algebra with a positive involution $*$,  $O_B$ a maximal order in $B$ stable under $*$, and 
 $\Delta$ the discriminant of $B/\Q$. 
 For each prime $\ell \mid \Delta$, let 
  $\Pi_{\ell}$ be a uniformizer of the division algebra $B \otimes _{\Q}\Q_{\ell}$. 
For a fixed prime $p$, let $\bfM$ (resp.~$\bfM^{\rm unp}$) be the coarse moduli scheme over 
$\Z_{(p)}$ of principally polarized  (resp.~unpolarized) $O_B$-abelian surfaces  that satisfies the determinant condition.\footnote{When $p \mid \Delta$, the determinant condition on an $O_B$-abelian surface is the same as the {\it special  condition} 
%imposed on $O_B$-abelian scheme over a $\Z_p$-scheme $S$ of relative dimension $2$ 
in the sense of \cite[III. 3.1]{BC} and  \cite{Drinfeld}.} 
 The natural forgetful  map  $(A,\lambda,\iota)\mapsto (A,\iota)$ gives rise to a morphism 
\[ f: \bfM\to \bfM^{\rm unp}. \] 
%\begin{equation}\label{eq:gamma1}
%   \gamma+\bar\gamma=0\ \text{and}\ \gamma^2<0  
%\end{equation}
%such that 
%\begin{equation}
%    \label{eq:*}
%    b^*=\gamma \bar b \gamma^{-1}, \ \forall\, b\in B
%\end{equation}
%Conversely, if $\gamma$ satisfies \eqref{eq:gamma1}, then the involution defined by \eqref{eq:*} is positive. 

We recall that the involution $*$ on $B$ can be written as 
\begin{equation}
    \label{eq:A.2}
    b  \mapsto b^*= \gamma \bar{b} \gamma^{-1}
\end{equation}
for an element $\gamma \in B$ with $\gamma^2<0$. 
In \cite[Section 4]{Drinfeld} and \cite[III. 0.5]{BC},  Drinfeld and    Boutot-Carayol chose a  $\gamma$ such that 
\begin{equation}\label{special}
 \gamma^2=-\Delta. 
\end{equation}
In this case, there is a maximal order $O_B$ such that $\gamma\in O_B$ since all maximal orders are conjugate under $B^\times$. This order $O_B$ is  also stable under the  involution $*$. 
Further, every object $(A,\iota)_S$ in $\bfM^{\rm unp}(S)$ admits a unique $O_B$-linear principal polarization (\cite[Proposition 4.3]{Drinfeld} and \cite[Proposition 3.3]{BC}) so that the morphism $f$ is an isomorphism. 
The geometry of $\bfM\simeq \bfM^{\rm unp}$ in this case has been studied and is well-understood; see~\cite{Ogg, Carayol}. 
Conversely, we have the following characterization of such a positive involution. 

\begin{lemma}
    If there exists a principally polarized complex $O_B$-abelian surface, then the  involution $*$ is given as in \eqref{eq:A.2} and \eqref{special}.
\end{lemma}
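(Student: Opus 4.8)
The plan is to deduce this from Theorem~\ref{main.1}. Recall first that, $B$ being indefinite, the positive involution $*$ is not the canonical involution, and hence is necessarily of the form \eqref{eq:A.2}, with $\bar\gamma=-\gamma$ (so that $\gamma^2=-\Nrd_{B/\Q}(\gamma)\in\Q$) and $\gamma^2<0$; moreover such a $\gamma$ is unique up to multiplication by an element of $\Q^\times$. Consequently the class of $\gamma^2$ in $\Q^\times_{<0}/(\Q^\times)^2$ depends only on $*$, and to prove the lemma it suffices to show that this class equals that of $-\Delta$: for then, after replacing $\gamma$ by $\gamma/s$ for a suitable $s\in\Q^\times$, one obtains $\gamma^2=-\Delta$, which is \eqref{special}. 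Since the sign of $\gamma^2$ already matches that of $-\Delta$, and since $\Q^\times/(\Q^\times)^2$ embeds into $\{\pm1\}\times\prod_\ell\Z/2\Z$ via the sign and the parities of the $\ell$-adic valuations, while $-\Delta$ is squarefree with $\ord_\ell(-\Delta)$ odd precisely for $\ell\mid\Delta$, the claim reduces to: \emph{$\ord_\ell\!\big(\Nrd_{B/\Q}(\gamma)\big)$ is odd if and only if $\ell\mid\Delta$.}

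For a prime $\ell\mid\Delta$ this is supplied by Theorem~\ref{main.1}. Indeed, the existence of a principally polarized complex $O_B$-abelian surface is exactly statement (a) of that theorem for $F=\Q$ and $m=1$; since $m=1$ is odd, the implication (a)$\Rightarrow$(c) gives that $\ord_{\Pi_\ell}(\gamma)$ is odd for every $\ell\mid\Delta$, and this equals $\ord_\ell\!\big(\Nrd_{B_\ell/\Q_\ell}(\gamma)\big)=\ord_\ell\!\big(\Nrd_{B/\Q}(\gamma)\big)$ because $\Nrd_{B_\ell/\Q_\ell}(\Pi_\ell)$ generates $\ell\Z_\ell$. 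For a prime $\ell\nmid\Delta$ the maximality of $O_B$ enters: the ring $O_{B_\ell}\simeq\Mat_2(\Z_\ell)$ is stable under $*$ and under the canonical involution, so conjugation by $\gamma$ normalizes $\Mat_2(\Z_\ell)$; as the normalizer of a maximal order of $\Mat_2(\Q_\ell)$ in $\Mat_2(\Q_\ell)^\times$ is $\Q_\ell^\times\cdot\GL_2(\Z_\ell)$, we get $\gamma\in\Q_\ell^\times\cdot\GL_2(\Z_\ell)$ and therefore $\ord_\ell\!\big(\Nrd_{B/\Q}(\gamma)\big)=\ord_\ell(\det\gamma)$ is even. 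This establishes the displayed equivalence, hence the lemma.

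The argument rests only on Theorem~\ref{main.1} together with the standard description of normalizers of maximal orders in $\Mat_2(\Q_\ell)$ and elementary facts about $\Q^\times/(\Q^\times)^2$, so I do not anticipate a serious obstacle; the only genuinely new input beyond the previously established material is the local normalizer step at the primes $\ell\nmid\Delta$. Alternatively, one may run the same argument through condition (b) of Theorem~\ref{main.1}: a self-dual $O_B$-lattice exists in the rank-one skew-Hermitian $(B,*)$-module $(V,\psi)$, and the local classification in \S\ref{sec:LC.2} forces $\ord_{\Pi_\ell}(\gamma)$ to be odd at each ramified prime $\ell\mid\Delta$, the unramified primes being treated as above.
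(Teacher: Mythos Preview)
Your proof is correct and follows essentially the same route as the paper's: invoke Theorem~\ref{main.1} (a)$\Rightarrow$(c) at the ramified primes to get $\ord_{\Pi_\ell}(\gamma)$ odd, use the normalizer description $\Q_\ell^\times\cdot\GL_2(\Z_\ell)$ at the unramified primes to get even valuation, conclude that $-\gamma^2$ differs from $\Delta$ by a square, and rescale $\gamma$. The only cosmetic difference is that you phrase the invariant as $\Nrd_{B/\Q}(\gamma)$ whereas the paper writes it as $r=-\gamma^2$; since $\bar\gamma=-\gamma$ these coincide.
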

Therefore, condition \eqref{special} is equivalent to the conditions in Theorem~\ref{main.1} under $m=1$ and $d=1$. 
\begin{proof}
One can write $b^*=\gamma \bar b \gamma^{-1}$  and $\gamma^2=-r$ for some  $\gamma\in B^\times$ and $r\in \Q_{>0}$. 
For a prime $\ell \mid \Delta$, one has that  $\ord_{\Pi_{\ell}}(\gamma)$ is odd by Theorem~\ref{main.1},  and hence $\ord_\ell(r)$ is odd. 
For $\ell\nmid \Delta$, we may identify $O_B\otimes \Z_{\ell}$ with $\Mat_2(\Z_{\ell})$ so we have $\gamma\in \Q_{\ell}^\times \cdot \GL_2(\Z_\ell)$, which is the normalizer of $\Mat_2(\Z_\ell)$, and hence $\ord_\ell(r)$ is even. Therefore, $r=\Delta s^2$ for some $s\in \Q$. 
Replacing $\gamma$ by $\gamma/s$, we get the desired result. 
\end{proof}
The Cherenik-Drinfeld theorem  \cite[Section 4]{Drinfeld} states that, under condition \eqref{special} and that $p \mid \Delta$,  
the formal completion of $\bfM^{\rm unp}\otimes W(\Fpbar)$ along the special fiber admits a $p$-adic unformization by 
one-dimensional Deligne's formal scheme $\wh \Omega^{\rm nr}=\wh \Omega \hat \otimes W(\Fpbar)$.
Note that in this case we have  $\ord_{\Pi_p}(\gamma)=1$. 

Consider now the general case: Let $\gamma \in B$ such that $b^*=\gamma \bar b \gamma^{-1}$, % as in Section \ref{ss:PEL} 
but no longer suppose condition \eqref{special}. 
Let 
\[ S \coloneqq \{ \text {primes}\ \ell : \ell \mid \Delta, \,  \ord_{\Pi_\ell}(\gamma) \text{ is even} \}. \]
By Theorem~\ref{main.1}, the moduli scheme $\bfM$ is non-empty if and only if $S=\emptyset$. 
We relax the condition on the moduli scheme  $\bfM$ by removing the determinant condition, and write  $\wt \bfM$ for the coarse moduli scheme over $\Z_{(p)}$ of principally polarized $O_B$-abelian surfaces for a fixed prime $p$.  
Note that we have  $\bfM_{\Q} = \wt \bfM_{\Q}$.

Suppose that $\wt \bfM$ is non-empty and let $(A,\lambda, \iota)$ be a principally polarized $O_B$-abelian surface over an algebraically closed field of \ch zero or $p$. 
Then  the  $\ell$-adic Tate module $T_\ell(A)$, for any prime $\ell\neq p$, is a self-dual skew-Hermitian $O_B\otimes \Z_\ell$-lattice of rank one. 
This and Proposition  \ref{prop:herm-self-dual} imply  that $S\subset \{p\}$. 
It is natural to know whether the case $S=\{p\}$ can occur. 
 In this case, $\wt \bfM$ is  necessarily supported in its special fiber $\wt \bfM\otimes \Fp$ by Theorem \ref{main.1}. 
In the following we will show that when $S=\{p\}$, 
the special fiber is still non-empty, so that the case $S=\{p\}$ does occur.

Assume that $p \mid \Delta$. 
We begin by reviewing some basic properties of  \dieu $O_B\otimes \Zp$-modules of rank four. 
We simply write $\Pi=\Pi_{p}$. 
Then one has $O_B\otimes \Zp=\Z_{p^2}[\Pi]$ subject to relations \eqref{eq:Pi}.  
Let $k$ be an algebraically closed field $k$ of \ch $p$ and $W(k)$ be the ring of Witt vectors over $k$. 
Write $\Hom(\Z_{p^2},W(k))=\{\tau_0,\tau_1\}\simeq \Z/2\Z$ with Frobenius action $\sigma$ by $j\mapsto j+1$. 
Let $(M, \iota_p)$ be a  \dieu $O_B\otimes \Zp$-module of $W(k)$-rank four. 
We have a decomposition $M=M^0 \oplus M^1$ as in \eqref{eq:dec_Mj} on which three operators act as follows 
\[ \sfF,\sfV, \Pi: M^{0}\to M^{1} \ \text{and } \ M^{1} \to M^{0}. \]
We also have the respective decompositions 
\[ M/\sfV M =( M/\sfV M)^0 \oplus ( M/\sfV M)^1, \quad M/(\sfF,\sfV)M =( M/(\sfF,\sfV) M)^0 \oplus ( M/(\sfF,\sfV) M)^1.\]

\begin{defn}
For each $j\in \Z/2\Z$, put 
\[ c_j \coloneqq \dim_k M^{j}/\sfV M^{j+1}, \quad p_j\coloneqq \dim_k M^{j}/\Pi M^{j+1}, \quad a_j\coloneqq \dim_k ( M/(\sfF,\sfV) M)^j. \]
We call respectively $(c_0,c_1)$ 
the {\it Lie type}, $(a_0,a_1)$ the {\it $a$-type}, and $(p_0,p_1)$ the {\it $\Pi$-type} of $(A,\iota)$ or of the \dieu $O_B\otimes\Zp$-module $(M, \iota_p)$.    
\end{defn}
If $(M, \iota_p)$ does not satisfy the determinant condition, then  either $(c_0,c_1)=(2,0)$ or $(c_0,c_1)=(0,2)$. 
 Suppose that  $(c_0,c_1)=(2,0)$ for simplicity. 
We have 
\begin{equation}
    \label{eq:VMj}
    \sfV M^1=pM^0, \quad \sfV M^0=M^1.
\end{equation}
This implies that $\sfF M^0=M^1$ and $\sfF M^1=pM^0$. So we have 

\[ (\sfF,\sfV)M^1=pM^0,\ (\sfF,\sfV)M^0=M^1,\  \text{and}\  (a_0,a_1)=(2,0). \]
Therefore, $M$ is superspecial. 
From the commutative diagram 
\[ \begin{CD}
    M^0 @>\sfF>> M^1 \\
    @VV{\Pi}V  @VV{\Pi}V \\
    M^1 @>\sfF>> M^0
\end{CD}\]
and that $\sfF M^0=M^1$, we have
\[ pM^0 \subseteq \Pi M^1=\Pi\cdot \sfF(M^0)=\sfF\cdot \Pi(M^0)\subseteq \sfF(M^1)=pM^0.\]
So we have  
\begin{equation}\label{eq:PiMj}
   \Pi M^1= pM^0, \quad \Pi M^0=M^1, \quad \text{and $(p_0,p_1)=(2,0)$.}
\end{equation}

Conversely, 
let $M$ be a  $W(k)$-module of rank four which is  equipped with the following three structures: 
\begin{enumerate}
    \item [(i)] $\sfF$ and $\sfV:M^{j} \to M^{j+1}$ are $\sigma$-linear and $\sigma^{-1}$-linear maps, respectively such that $\sfF \sfV=\sfV \sfF=p$. 
    \item [(ii)] $M=M^0\oplus M^1$, where $b\in \Z_{p^2}$ acts on $M^j$ by the multiplication by $\tau_j(b)$ and the map $\Pi:M^j\to M^{j+1}$ for $j\in \Z/2\Z$ satisfies 
    \begin{equation}\label{eq:FPi}
        \Pi^2=-p,\quad \text{and}\quad  \sfF \cdot \Pi=\Pi \cdot \sfF.
    \end{equation}
    \item [(iii)] $(\, ,):M\times M\to W(k)$ is a $W(k)$-bilinear, symmetric and perfect pairing, and it satisfies
    \begin{equation}\label{eq:anti-pol}
        (\sfF x,y)=(x,\sfV y)^\sigma, \quad (M^0,M^0)=(M^1,M^1)=0, \quad (\Pi x, \Pi y)=p(x,y), \quad \forall\, x, y\in M.
    \end{equation}
\end{enumerate} 
Then, by putting $\<x,y\> \coloneqq 
(x,\gamma^{-1} y)$, we obtain a  principally polarized \dieu $O_B\otimes \Zp$-module $(M, \<\, , \>,\iota_p)$ with Lie type $(c_0,c_1)=(2,0)$. 
\begin{lemma}\label{lm:DM(2,0)}
Assume that $p \mid \Delta$. 
Then 
   there exists a  unique principally polarized \dieu $O_B\otimes \Zp$-module of $W(k)$-rank four  with Lie type $(c_0,c_1)=(2,0)$ up to isomorphism.   % {\rm (}or $(0,2)${\rm )}.  
\end{lemma}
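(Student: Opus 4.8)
By the discussion preceding the lemma, a principally polarized \dieu $O_B\otimes\Zp$-module of $W(k)$-rank four with Lie type $(c_0,c_1)=(2,0)$ amounts to a free rank-four $W(k)$-module $M$ carrying data $(\sfF,\sfV,\Pi,(\,,\,))$ as in (i)--(iii), with $\langle x,y\rangle=(x,\gamma^{-1}y)$; so it suffices to prove existence and uniqueness of such data up to isomorphism. For existence I would exhibit the evident model: $M=M^0\oplus M^1$ with $M^j=W(k)e^j_1\oplus W(k)e^j_2$, let $a\in\Z_{p^2}$ act on $M^j$ through $\tau_j$, take $\Pi$ $W(k)$-linear with $\Pi e^0_i=e^1_i$, $\Pi e^1_i=-pe^0_i$, take $\sfF$ $\sigma$-linear with $\sfF e^0_i=e^1_i$, $\sfF e^1_i=-pe^0_i$, take $\sfV$ $\sigma^{-1}$-linear with $\sfV e^0_i=-e^1_i$, $\sfV e^1_i=pe^0_i$, and let $(\,,\,)$ be symmetric $W(k)$-bilinear with $(M^0,M^0)=(M^1,M^1)=0$ and matrix $\left(\begin{smallmatrix}0&1\\-1&0\end{smallmatrix}\right)$ on $M^0\times M^1$ in the bases $\{e^0_i\},\{e^1_i\}$. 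A direct check gives $\sfF\sfV=\sfV\sfF=p$, $\sfF\Pi=\Pi\sfF$, $\Pi^2=-p$, $\Pi a=\bar a\Pi$ (from $\tau_1=\tau_0\circ\bar{\,\cdot\,}$), $(\sfF x,y)=(x,\sfV y)^\sigma$, $(\Pi x,\Pi y)=p(x,y)$, while $M/\sfV M=M^0/pM^0\oplus M^1/M^1$ shows the Lie type is $(2,0)$; hence $(M,\langle\,,\,\rangle,\iota_p)$ with $\langle x,y\rangle:=(x,\gamma^{-1}y)$ is an object as required.

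For uniqueness, take an arbitrary such $(M,\langle\,,\,\rangle,\iota_p)$ and pass to the symmetric form $(x,y):=\langle x,\gamma y\rangle$. First normalize $\gamma$: rescaling by a power of $p$ leaves $*$ unchanged, so we may assume $\gamma$ is a $\Pi$-adic unit, and --- using $p\mid\Delta$ and $\ord_{\Pi_p}(\gamma)$ even --- after choosing the presentation $O_B\otimes\Zp=\Z_{p^2}[\Pi]$ appropriately we may take $\gamma$ to be a trace-zero generator of $\Z_{p^2}$ (its minimal polynomial then has unit discriminant, so $\Zp[\gamma]$ is the maximal unramified quadratic order). Then $c^*=\bar c$ for $c\in\Z_{p^2}$, a short computation gives $\Pi^*=\Pi$, and therefore $(\,,\,)$ is symmetric and perfect, satisfies $(\sfF x,y)=(x,\sfV y)^\sigma$ and $(\Pi x,\Pi y)=p(x,y)$, and $(M^0,M^0)=(M^1,M^1)=0$ because $\tau_0(c)\ne\tau_1(c)$ for a generator $c$; so $(M,\sfF,\sfV,\Pi,(\,,\,))$ satisfies (i)--(iii). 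Next rigidify the module: by \eqref{eq:VMj} and \eqref{eq:PiMj}, $\sfF\colon M^0\to M^1$ is $\sigma$-linear bijective and $\Pi\colon M^0\to M^1$ is $W(k)$-linear bijective, so $g:=\Pi^{-1}\sfF$ is a $\sigma$-linear automorphism of the free $W(k)$-module $M^0$; since $k$ is algebraically closed it is unit-root, hence $N:=(M^0)^{g=1}$ is free of rank two over $\Zp$ with $M^0=W(k)\otimes_\Zp N$. Choosing a $\Zp$-basis $e_1,e_2$ of $N$ and setting $f_i:=\Pi e_i=\sfF e_i$ gives a $W(k)$-basis $\{e_1,e_2,f_1,f_2\}$ of $M$ on which, using $\Pi^2=-p$, $\sfF\sfV=\sfV\sfF=p$ and $\sfV M^0=M^1$, the operators $\sfF,\sfV,\Pi$ act by exactly the formulas of the model (in particular $\sfV e_i=-\Pi e_i$).

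Finally the pairing is determined: since $(M^0,M^0)=(M^1,M^1)=0$, $(\,,\,)$ is given by $h(x,y):=(x,\Pi y)$ on $N$, and applying $(\Pi x,\Pi y)=p(x,y)$ to the pair $(e_1,\Pi e_2)$ together with the symmetry of $(\,,\,)$ gives $h(e_2,e_1)=-h(e_1,e_2)$, while $(\sfF x,y)=(x,\sfV y)^\sigma$ with $\sfV e_i=-\Pi e_i$ gives $h(e_1,e_2)=-h(e_2,e_1)^\sigma$; hence $h(e_1,e_2)\in\Zp^\times$ and $h(e_i,e_i)=0$, so $h$ is a perfect alternating $\Zp$-form on $N\cong\Zp^{\,2}$. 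Such a form admits a symplectic basis and is unique up to isomorphism (cf.\ Lemma~\ref{symclass}), so after replacing $e_2$ by a suitable unit multiple --- which keeps it $g$-fixed --- we may arrange $h(e_1,e_2)=1$; then all values of $(\,,\,)$, hence of $\langle\,,\,\rangle$, are forced, so any two modules as in the lemma are isomorphic and the model represents the unique class.

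The step I expect to be the main obstacle is the normalization of the involution $\gamma$ together with the attendant sign bookkeeping: that $\gamma$ can be placed inside $\Z_{p^2}$ as a trace-zero generator (this is exactly where the hypothesis $p\mid\Delta$ with $\ord_{\Pi_p}(\gamma)$ even enters, and it needs extra care when $p=2$), that $\Pi^*=\Pi$ and $(M^j,M^j)=0$ follow, and above all that after the twist the residual pairing $h$ comes out $\Zp$-valued and \emph{alternating} rather than landing in the trace-zero part of $W(k)$ --- it is the passage to the \emph{symmetric} form $(x,y)=\langle x,\gamma y\rangle$ that produces the right signs and reduces everything to the classification of rank-two symplectic $\Zp$-lattices. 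The remaining verifications are bounded computations.
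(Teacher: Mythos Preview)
Your proof is correct and takes a genuinely different route from the paper's. Both build an explicit model for existence (yours with $\Pi$ and $\sfF$ diagonal on the basis, the paper's with $\Pi$ given by $\bbJ_1$), but the uniqueness arguments diverge. The paper descends to $W(\F_{p^2})$ via $M^\diamond=\{m:\sfF^2 m=pm\}$, encodes the pairing as the unimodular \emph{Hermitian} form $\varphi(x,y)=(x,\sfF y)$ on the rank-two $\Z_{p^2}$-module $M^{\diamond,0}$, finds an orthonormal basis, and then normalizes $\Pi$. You instead descend all the way to $\Z_p$ using the $\sigma$-linear automorphism $g=\Pi^{-1}\sfF$ of $M^0$ (which exists because \eqref{eq:PiMj} makes $\Pi\colon M^0\to M^1$ bijective), and reduce to the rank-two \emph{symplectic} $\Z_p$-lattice $(N,h)$ with $h(x,y)=(x,\Pi y)$. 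Your route is a bit more elementary and packages $\sfF,\sfV,\Pi$ simultaneously; the paper's route has the advantage that it immediately yields $\Aut\cong\SU_2(\Z_p)$, which is exactly what is needed in Lemma~\ref{lm:AutM}, whereas your framework gives $\Aut\cong\SL_2(\Z_p)$ and one must then observe that the standard rank-two Hermitian form over $\Z_{p^2}$ is isotropic (the unramified norm hits $-1$) to match the two descriptions.

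Two minor remarks. First, your claim that $\gamma$ is a \emph{generator} of $\Z_{p^2}$ fails when $p=2$ (a trace-zero unit has minimal polynomial of discriminant divisible by $4$); but all you actually use is $\gamma\in\Z_{p^2}^\times$ with $\bar\gamma=-\gamma$, which does hold, so the argument survives --- and your worry about $h$ being alternating at $p=2$ is unfounded, since $2h(x,x)=0$ already forces $h(x,x)=0$ in the torsion-free ring $W(k)$. Second, your explicit hypothesis $\ord_{\Pi_p}(\gamma)$ even is also implicit in the paper's proof (it is what makes $(\,,\,)=\langle\cdot,\gamma\cdot\rangle$ perfect), and in fact the lemma as stated is vacuous without it: when $\ord_{\Pi_p}(\gamma)$ is odd the pairing $\langle\,,\,\rangle$ becomes $\Z_{p^2}$-bilinear, and the relation $\langle\sfF x,\sfF z\rangle=p\langle x,z\rangle^\sigma$ on $M^0$ then obstructs perfectness, so no principally polarized module of Lie type $(2,0)$ exists.
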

Such a \dieu module is superspecial as seen above. 
The case $(c_0,c_1)=(0,2)$ can be obtained from this result with the index shifted by one.
\begin{proof} 
% Put
%\[ (x,y)\coloneqq \<x,\gamma y\>, \quad \forall\, x, y \in M.\]
We first prove existence. 
Let $M=M^0\oplus M^1$, where $M^0$ and $M^1$ are free $W(k)$-modules of rank two, with bases $\{e_1,e_2\}$ and $\{e_3, e_4\}$, respectively. Define an action of  $\Z_{p^2}$ on $M$ by (ii) and  $\Pi$ on $M$ by the representative matrix $[\Pi]$ with respect to $\{e_i\}$:
\begin{equation}
   \label{eq:[Pi]}
   [\Pi]=\begin{pmatrix}
      0 & p \bbJ_1 \\
      \bbJ_1 & 0 
   \end{pmatrix}, \ \text{where} \ \bbJ_1= \begin{pmatrix}
      0 & -1 \\
      1 & 0 
   \end{pmatrix}.
\end{equation}
Then $\Pi^2=-p$ and $M$ is an $O_B\otimes W(k)$-module of rank one. 
Define a $W(k)$-bilinear pairing $(\, ,):M\times M\to W(k)$ using the matrix: 
\begin{equation}\label{eq:pairing}
   ( (e_i,e_j) ) =\begin{pmatrix}
    0 & \bbI_2 \\
    \bbI_2 & 0 
\end{pmatrix}, \ \text{where} \ \bbI_2= \begin{pmatrix}
      1 & 0 \\
      0 & 1 
   \end{pmatrix}.  
\end{equation}
Then $(\, ,)$ is perfect and symmetric and each $M^j$ is an isotropic submodule. 
Further we define a $\sigma$-linear map $\sfF:M\to M$ whose representative matrix with respect to $\{e_i\}$  is given by 
\begin{equation}\label{eq:[F]}
 [\sfF]=\begin{pmatrix}
    0 & p \bbI_2 \\
    \bbI_2 & 0 
\end{pmatrix}.    
\end{equation} 
Then we have $\sfF \cdot \Pi=\Pi \cdot \sfF$ and hence condition \eqref{eq:FPi} is satisfied.
Moreover  we set   $\sfV\coloneqq p \sfF^{-1}$ and then  condition \eqref{eq:anti-pol} is satisfied. 
Hence $M$ is equipped with structures (i), (ii), (iii) as desired.

  Next we prove uniqueness. 
  Let $(M, \<\, , \>,\iota_p)$ be a principally polarized \dieu $O_B\otimes \Zp$-module of $W(k)$-rank four with Lie type $(c_0,c_1)=(2,0)$. 
  We put $(x,y)\coloneqq \<x,\gamma y\>$ for $x, y\in M$. 
  We show that  there exist $W(k)$-bases $\{e_1,e_2\}$ and $\{e_3,e_4\}$ for $M^0$ and $M^1$, respectively,  such that  conditions \eqref{eq:[Pi]}, \eqref{eq:pairing} and  \eqref{eq:[F]} are satisfied. 
Put $M^\diamond 
\coloneqq \{m\in M \mid  \sfF^2 m=p m\}$. 
Then 
$M^\diamond$ is a principally polarized \dieu $O_B\otimes \Zp$-module over $\F_{p^2}$ 
of $W(\F_{p^2})$-rank four such that $M^\diamond\otimes_{W(\F_{p^2})} W(k)=M$. 
As before, we have the decomposition $M^\diamond=M^{\diamond,0}\oplus M^{\diamond,1}$. 
On $M^\diamond$, we have $\sfF^2=\sfF \cdot \sfV=p$ and $\sfF=\sfV$. 
Let $\varphi :M^{\diamond,0}\times M^{\diamond,0}\to W(\F_{p^2})$ be a pairing given by $\varphi(x, y) \coloneqq (x,\sfF y)$. 
One easily checks that $\varphi$ is a unimodular Hermitian form over $W(\F_{p^2})$ of rank two. 
Since $W(\F_{p^2})/W(\Fp)$ is unramified, there exists an orthonormal basis 
$\{e_1,e_2\}$ of $M^{\diamond, 0}$ for $\varphi$. 
Set $e_3\coloneqq \sfF e_1$ and $e_4\coloneqq \sfF e_2$. Then $\{e_i\}$ satisfy  conditions \eqref{eq:pairing} and \eqref{eq:[F]}. 
From \eqref{eq:PiMj}, we can  write 
\[ [\Pi]=\begin{pmatrix}
    0 & p B \\
    C & 0
\end{pmatrix} \quad \text{for some $B,C\in \Mat_2(W(\F_{p^2}))$}.\]
Putting $\scrB \coloneqq [e_1,e_2,e_3,e_4]$, we compute that
\[ \sfF\cdot \Pi (\scrB)=\scrB \cdot [\sfF]\cdot \ol {[\Pi]}=\scrB\cdot\begin{pmatrix}
   p \ol C & 0\\
   0  & p \ol B     
\end{pmatrix}, \quad \Pi \cdot \sfF (\scrB)=\scrB \cdot [\Pi]\cdot [\sfF]=\scrB\cdot \begin{pmatrix}
   p B & 0\\
   0  & C
\end{pmatrix}. \]
So we have $B=\ol C$. 
From $(\Pi x, \Pi y)=p(x,y)$ and $\Pi^2=-p$, 
we obtain $\ol C^t C=\bbI_2$ and $\ol C \cdot C=-\bbI_2$ and  
these imply  $C^t=-C$. 
Hence we can write $C=\begin{psmallmatrix}
    0 & -c \\
    c &  0
\end{psmallmatrix}$ and then we have $c \bar c=1$ from $\ol C C=-\bbI_2$. 
Hence \eqref{eq:[Pi]} is satisfied. 

If $\scrB'=[e_1', e_2',e_3',e_4']$ is another $W(k)$-basis for $M^{\diamond,0}\oplus M^{\diamond,1}$ satisfying conditions \eqref{eq:pairing} and \eqref{eq:[F]}, then $\scrB'=\scrB \cdot P$ and the transformation matrix $P$ satisfies
\[ P=\begin{pmatrix}
    A & 0 \\
    0 & \ol A
\end{pmatrix}, \quad \ol A^t \cdot A=\bbI_2.\]
With respect to the new basis $\scrB'$, the representative matrix $[\Pi]'$ of $\Pi$ satisfies 
\[ [\Pi]'=P^{-1} [\Pi] P=\begin{pmatrix}
    0 & p \ol{C'} \\
    C' & 0 
\end{pmatrix}, \quad {\text{where}} \quad C'=\ol A^{-1} C A=A^ t C A. \]
We choose $A=\begin{psmallmatrix}
    c^{-1} & 0 \\
    0 &  1
\end{psmallmatrix}$ 
and compute that $C'=\bbJ_1$. This proves the lemma. 
\end{proof}

%converse holds true, namely, the necessary condition $S\subset \{p\}$ also implies that $\wt \bfM$ is non-empty. 
Now we consider the case  where $S=\{p\}$. 
Note that we can also assume that $\ord_{\Pi_p}(\gamma)=0$. 
\begin{lemma}
 There exists a triple $(B,*,O_B)$ such that $S=\{p\}$. 
 \end{lemma}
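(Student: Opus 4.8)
The plan is to write down the triple explicitly. Since $B$ is indefinite it ramifies at an even number of finite places, so the smallest admissible case is $\Delta=pq$ for a single auxiliary prime $q$, and I will aim for this. The key point is that if I can arrange $\gamma=\sqrt{-q}$ to lie in $B$, then $\gamma^2=-q<0$, so $b\mapsto\gamma\bar b\gamma^{-1}$ is a positive involution (cf.\ \cite[Section~21]{mumford:av}), and at each ramified place $\ell\mid\Delta$ one has $\ord_{\Pi_\ell}(\gamma)=v_\ell(\Nrd_{B/\Q}(\gamma))=v_\ell(q)$ because $\Nrd_{B/\Q}(\gamma)=\gamma\bar\gamma=-\gamma^2=q$. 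Hence $\ord_{\Pi_p}(\gamma)=v_p(q)=0$ is even while $\ord_{\Pi_q}(\gamma)=v_q(q)=1$ is odd, so $S=\{p\}$ — provided I can realize such a $B$ together with a $*$-stable maximal order.

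For the ramification, assume first that $p$ is odd and let $B=\Q\langle i,j\rangle$ with $i^2=-q$, $j^2=p$, $ij=-ji$, so that $i=\sqrt{-q}$ and $L:=\Q(i)=\Q(\sqrt{-q})\subset B$. Computing Hilbert symbols, $B$ ramifies at $p$ iff $\left(\frac{-q}{p}\right)=-1$ and at $q$ iff $\left(\frac{p}{q}\right)=-1$, while $B$ is unramified at $\infty$ (as $p>0$) and, whenever $q\equiv 3\pmod 4$, also at $2$; no other place can ramify. I would therefore pick a prime $q\equiv 3\pmod 4$ with $\left(\frac{p}{q}\right)=-1$, which exists by Dirichlet; quadratic reciprocity (using $q\equiv 3\pmod 4$) then forces $\left(\frac{-q}{p}\right)=-1$ as well. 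Thus $B$ ramifies exactly at $\{p,q\}$: it is indefinite with $\Delta=pq$, and $L$ embeds in it, so the element $\gamma$ and the involution $*$ above make sense. For $p=2$ one instead takes $q=3$ and checks directly that $B=\Q\langle i,j\rangle$ with $i^2=-3$, $j^2=2$ ramifies exactly at $\{2,3\}$, so $\Delta=6$.

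It remains to produce a $*$-stable maximal order. Writing $B=L\oplus Lj$ with $j^2=p$ and $jx=\bar x j$ for $x\in L$, a direct computation gives $\gamma\bar b\gamma^{-1}=\bar x+yj$ for $b=x+yj$, so the involution is $(x+yj)^*=\bar x+yj$; hence $O_B:=O_L\oplus O_L j$ is visibly $*$-stable, and it is an order since $j^2=p\in O_L$. I would then check maximality place by place: at $p$ and $q$, where $B_\ell$ is a division algebra, $j$ (resp.\ $\sqrt{-q}$) is a uniformizer and $O_{L,\ell}\oplus O_{L,\ell}j$ equals the unique maximal order of $B_\ell$; at every $\ell\nmid pq$ — in particular at $\ell=2$, where $q\equiv 3\pmod 4$ makes $L$ unramified at $2$ — the algebra $L_\ell/\Q_\ell$ is split or unramified and $j^2=p$ is a unit, so $O_{L,\ell}\oplus O_{L,\ell}j$ is the standard maximal order of $\Mat_2(\Q_\ell)$. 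I expect the only delicate point to be precisely this last one — simultaneously keeping $B$ unramified at $2$ and keeping $O_L\oplus O_L j$ maximal at $2$ — and it is exactly what the parity condition $q\equiv 3\pmod 4$ (together with reciprocity at $p$) is chosen to resolve; the $p=2$ case is then handled separately with $q=3$ by the same verifications.
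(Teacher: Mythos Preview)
Your proof is correct and follows essentially the same strategy as the paper: construct $\gamma$ with $\gamma^2$ equal to minus a product of auxiliary primes, embed $\Q(\gamma)$ into the indefinite quaternion algebra ramified at $p$ and those primes, define $*$ via $\gamma$, and exhibit a $*$-stable maximal order. The paper allows an odd number $t$ of auxiliary primes $p_1,\dots,p_t$ and argues abstractly (embedding criterion for $K=\Q(\sqrt{-p_1\cdots p_t})$ into $B$, then simply chooses a maximal order containing $\gamma$ and asserts $*$-stability), whereas you specialize to $t=1$ and carry out the construction explicitly via Hilbert symbols and the order $O_L\oplus O_L j$; your direct verification of $*$-stability and of maximality at every place (including the delicate point at $2$) is more detailed than what the paper provides.
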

 \begin{proof}Choose a prime $p$ and an odd number of distinct primes $p_1,\dots, p_t$ such that $p$ is inert or ramified in $K\coloneqq \Q(\sqrt{-p_1\dots p_t})$. 
Let $B$ be the quaternion $\Q$-algebra ramified exactly at $\{p, p_1,\dots, p_t\}$. 
Then there is an embedding $K \embed B$ of $\Q$-algebras, since $B$ is indefinite  and any prime  ramified in $B$  is either inert or ramified in $K$. 
Therefore, there exists an element $\gamma\in B^\times$ such that $\gamma^2=-p_1 \cdots p_t$. Define a positive involution $*$ on $B$ by $b\mapsto b^*=\gamma \bar b \gamma^{-1}$.
Choose a maximal order $O_B$ of $B$ containing $\gamma$. 
Then $O_B$ is stable under the involution $*$. 
Clearly, $S=\{p\}$. 
\end{proof}
When $S=\{p\}$, Theorem \ref{main.1} implies that 
an $O_B$-abelian surface $(A,\iota)$ does not satisfy the determinant condition  and   
hence we have either $(c_0,c_1)=(2,0)$ or $(c_0,c_1)=(0,2)$. 
\begin{prop}
    \label{prop:exist_av}
    Assume that 
    $S=\{p\}$. 
    For $(c_0,c_1)=(2,0)$ or $(c_0,c_1)=(0,2)$, there exists a principally polarized $O_B$-abelian surface  over $k$ of Lie type $(c_0,c_1)$. 
    Furthermore, such an abelian surface is  superspecial. 
\end{prop}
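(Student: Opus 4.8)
The statement splits into a \emph{superspecial} assertion and an \emph{existence} assertion, and I would dispatch the first immediately. If $(A,\lambda,\iota)$ is any principally polarized $O_B$-abelian surface over $k$ of Lie type $(2,0)$ (the type $(0,2)$ is symmetric, with the indices shifted by one), then its covariant Dieudonn\'e $O_B\otimes\Zp$-module $M$ has $W(k)$-rank four and fails the determinant condition, so the relations \eqref{eq:VMj} hold and give $\sfV^2M=pM$; hence $M$ is superspecial --- as already observed in the computation preceding Lemma~\ref{lm:DM(2,0)} --- and by Oort's criterion \cite[Theorem 2]{oort:product} the surface $A$ is superspecial. This needs no further work.

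For the existence assertion I would globalize Lemma~\ref{lm:DM(2,0)}, treating the case $(c_0,c_1)=(2,0)$ (the case $(0,2)$ being identical via the $(0,2)$-analogue of that lemma recorded after its proof). Lemma~\ref{lm:DM(2,0)} supplies a principally polarized Dieudonn\'e $O_B\otimes\Zp$-module $M$ of $W(k)$-rank four with Lie type $(2,0)$; let $\mathscr G_p$ be the corresponding principally polarized $p$-divisible group with $O_B\otimes\Zp$-action. It remains to realize $\mathscr G_p$ by a principally polarized $O_B$-abelian surface, and here I follow the gluing argument in the proof of Theorem~\ref{exist}. Start from a supersingular polarized (not necessarily principally) $O_B$-abelian surface $(A',\lambda',\iota')$ over $k$ --- one exists by \cite[Theorem 2.1]{slope} (with no condition on $(B,*,O_B)$). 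By the proof of \cite[Proposition 3.1]{slope} there is a quasi-isogeny $\phi_p\colon A'[p^\infty]\to\mathscr G_p$ of polarized $p$-divisible groups with $O_B\otimes\Zp$-actions. For each prime $\ell\neq p$ dividing $\deg\lambda'$, the hypothesis $S=\{p\}$ gives $\ell\notin S$, so by Proposition~\ref{sd-split} (if $\ell\nmid\Delta$) or Proposition~\ref{prop:herm-self-dual} (if $\ell\mid\Delta$) there is a self-dual $\Z_\ell$-valued skew-Hermitian $O_{B_\ell}$-lattice $(\Lambda_\ell,\psi_\ell)$ of rank one; since $H^1(\Q_\ell,\bfG^1)=1$, the space $T_\ell(A')\otimes\Q_\ell$ with its $\lambda'$-induced pairing is isomorphic as a skew-Hermitian $(B_\ell,*)$-module to $\Lambda_\ell\otimes\Q_\ell$, and one obtains a quasi-isogeny $\phi_\ell\colon A'[\ell^\infty]\to\mathscr G_\ell$ onto the principally polarized $\ell$-divisible group attached to $(\Lambda_\ell,\psi_\ell)$, again compatibly with all structures. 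Clearing denominators exactly as in the proof of Theorem~\ref{exist} (replacing $\phi_\ell$ by $N\phi_\ell$ and $\lambda'$ by $N^2\lambda'$ for a suitable $N$), one may assume every $\phi_\ell$ is an isogeny; then $A\coloneqq A'/\prod_\ell\ker\phi_\ell$, with the polarization $\lambda$ and $O_B$-action $\iota$ induced by $\lambda'$ and $\iota'$, is principally polarized --- because $\lambda$ induces the given principal polarization on each $\mathscr G_\ell$ --- and satisfies $A[p^\infty]\cong\mathscr G_p$ compatibly with the $O_B\otimes\Zp$-action, so that $\Lie(A)\cong M/\sfV M$ shows $A$ has Lie type $(2,0)$.

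I expect the only genuinely delicate step to be the quasi-isogeny $\phi_p$ at $p$, together with its $O_{B_p}$-equivariance and its compatibility with the polarization forms --- that is, the input imported from \cite[Proposition 3.1]{slope}; the surgery away from $p$ and the descent of the polarization then follow the proof of Theorem~\ref{exist} verbatim, and the superspecial assertion costs nothing once \eqref{eq:VMj} is available. A secondary point worth recording is that \cite[Theorem 2.1]{slope} supplies a supersingular polarized $O_B$-abelian surface with no appeal to the conditions of Theorem~\ref{main.1}, since those conditions bear on \emph{principal} polarizations only, whereas $(A',\lambda')$ here may be arbitrarily polarized.
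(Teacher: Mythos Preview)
Your proposal is correct and follows essentially the same approach as the paper: the paper's proof simply says the assertion follows from Lemma~\ref{lm:DM(2,0)} using the same argument as in Theorem~\ref{exist}, and you have faithfully unpacked that argument, including the necessary adaptation that self-dual local lattices at primes $\ell\neq p$ are constructed directly from Propositions~\ref{sd-split} and~\ref{prop:herm-self-dual} (using $\ell\notin S$) rather than coming from a global self-dual lattice.
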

\begin{proof}
    The assertion follows from Lemma \ref{lm:DM(2,0)}, using the same argument as in Theorem~\ref{exist}.  
\end{proof}
\begin{cor}
    \label{cor:wtM}
  % Let the notation and assumption be as in Proposition~\ref{prop:exist_av}. 
  Assume that 
    $S=\{p\}$. 
    Then $\wt \bfM$ is a non-empty and zero-dimensional scheme whose points are contained in the special fiber $\wt \bfM \otimes \Fp$.   
\end{cor}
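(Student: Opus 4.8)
The plan is to prove the three assertions of the corollary in turn: that $\wt\bfM$ is non-empty, that all of its points lie over $\Spec\F_p$, and that it is zero-dimensional. The first is immediate from Proposition~\ref{prop:exist_av}: under the hypothesis $S=\{p\}$ there is a principally polarized $O_B$-abelian surface over $k=\Fpbar$ (of Lie type $(2,0)$, say), and it defines a $k$-point of $\wt\bfM$ which, being of characteristic $p$, lands in $\wt\bfM\otimes\F_p$; hence $\wt\bfM\neq\emptyset$.

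For the second assertion I would show that the generic fiber $\wt\bfM_\Q$ is empty, so that the structure morphism $\wt\bfM\to\Spec\Z_{(p)}$ has image $\{(p)\}$ and every point of $\wt\bfM$ lies in $\wt\bfM\otimes\F_p$. Since $\wt\bfM_\Q=\bfM_\Q$ is of finite type over $\Q$, if it were non-empty it would possess a $\Qbar$-point, and via the inclusion $\Qbar\subset\C$ this would produce a complex principally polarized $O_B$-abelian surface. By the implication (a)$\Rightarrow$(c) of Theorem~\ref{main.1}, applied with $m=d=1$, this forces $\ord_{\Pi_\ell}(\gamma)$ to be odd for every prime $\ell\mid\Delta$, i.e.\ $S=\emptyset$, contradicting $S=\{p\}$.

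The substantive point is the last one. Here the key input is the ``furthermore'' clause of Proposition~\ref{prop:exist_av}: since $S=\{p\}$, any principally polarized $O_B$-abelian surface over $\Fpbar$ fails the determinant condition, so has Lie type $(2,0)$ or $(0,2)$, and is therefore superspecial. I would then establish rigidity of the structured $p$-divisible group: by Lemma~\ref{lm:DM(2,0)} the principally polarized $O_B\otimes\Zp$-Dieudonn\'e module of Lie type $(2,0)$ is unique up to isomorphism (and likewise for $(0,2)$ after shifting the index), so the $p$-divisible group with its principal polarization and $O_B$-action attached to a $k$-point of $\wt\bfM$ admits no non-trivial deformation; by Serre--Tate theory the same holds for the abelian surface with all its structures, so every closed point of $\wt\bfM\otimes\Fpbar$ is isolated with Artinian local ring. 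As $\wt\bfM$ is of finite type over $\Z_{(p)}$ it is quasi-compact, whence $\wt\bfM\otimes\Fpbar$ has only finitely many closed points and $\dim\wt\bfM=0$.

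The main obstacle, as indicated, is making the passage from ``every geometric point is superspecial'' to ``the scheme is zero-dimensional'' --- that is, ruling out a positive-dimensional family deforming the polarization or the $O_B$-structure while keeping the abelian surface superspecial. The rigidity argument via Lemma~\ref{lm:DM(2,0)} and Serre--Tate is the cleanest way to do this; alternatively one can invoke the classical finiteness of the set of isomorphism classes of superspecial principally polarized $O_B$-abelian surfaces over $\Fpbar$ (Oort's theorem that a superspecial abelian surface is isomorphic to the square of a fixed supersingular elliptic curve, together with the finiteness of the relevant quaternion class numbers) and use that $\wt\bfM$, being a coarse moduli scheme of finite type, has $\Fpbar$-points in bijection with these classes.
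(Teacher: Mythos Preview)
Your outline is correct and your alternative route (finiteness of isomorphism classes of superspecial principally polarized $O_B$-abelian surfaces) is exactly the paper's argument: the paper simply says that since all $k$-points are superspecial, the scheme has dimension zero. Your treatment of non-emptiness and of emptiness of the generic fiber is also the same as the paper's (the latter having been noted just before Proposition~\ref{prop:exist_av}).

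There is, however, a genuine gap in your \emph{primary} argument for zero-dimensionality. You write that because the principally polarized $O_B\otimes\Zp$-Dieudonn\'e module of Lie type $(2,0)$ is unique up to isomorphism (Lemma~\ref{lm:DM(2,0)}), the structured $p$-divisible group ``admits no non-trivial deformation''. This inference is invalid: uniqueness of the isomorphism class over $k$ says nothing about first-order deformations over $k[\epsilon]/(\epsilon^2)$---every deformation, trivial or not, reduces to the same object over $k$. What you would need to check via Grothendieck--Messing is that the Hodge filtration $\sfV M/pM\subset M/pM$ has no non-trivial $O_B$-stable isotropic lifts. In fact this \emph{is} true here, but for a different reason than the one you give: when the Lie type is $(2,0)$ one has $\sfV M/pM=(M/pM)^1$, i.e.\ the Hodge filtration is the entire $\tau_1$-eigenspace for the $\Z_{p^2}$-action, and the only $\Z_{p^2}$-stable direct-summand lift of a full eigenspace (respectively of the zero subspace in the $\tau_0$-component) is the trivial one. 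So rigidity holds, but it comes from the extremality of the Lie type, not from Lemma~\ref{lm:DM(2,0)}. If you want to keep the Serre--Tate approach, replace the appeal to Lemma~\ref{lm:DM(2,0)} with this observation; otherwise your second route via finiteness of the superspecial locus is both correct and closer to the paper.
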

\begin{proof}
Non-emptiness of $\wt \bfM$ follows from Proposition~\ref{prop:exist_av}. 
As all $k$-points of $\wt \bfM$ are superspecial, $\bfM$ has dimension zero. 
\end{proof}

%\red{Do we need to mention that $\SU_2(\Zp)$ is isomorphic to $\SL_2(\Zp)$, any good reference?} 

\begin{prop}\label{prop:mass}
Assume that $S=\{p\}$. Then we have
\begin{equation}
    \Mass(\wt \bfM(k))\coloneqq \sum_{[(A,\lambda,\iota)]\in \wt \bfM(k)} \frac{1}
    {|\Aut(A,\lambda,\iota)|}=\frac{1}{12}\prod_{\ell|(\Delta/p)} (\ell-1). 
\end{equation}
\end{prop}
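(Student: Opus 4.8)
The plan is to compute the mass on the right-hand side by a local-global argument analogous to the one used for Theorem~\ref{main.3}, but now in the much simpler setting $m=d=1$ with $p\mid\Delta$. By Corollary~\ref{cor:wtM} the scheme $\wt\bfM$ is zero-dimensional and supported in characteristic $p$, and by Proposition~\ref{prop:exist_av} every $k$-point is superspecial of Lie type $(2,0)$ or $(0,2)$. So I would first fix such a point $(A_0,\lambda_0,\iota_0)$ and form the associated $\Q$-group $I$ as in Section~\ref{ss:mass}: since $A_0$ is superspecial, $\End^0_B(A_0)$ is an appropriate quaternion-type algebra over $\Q$, and $I$ is an inner form of $\bfG_\Q=\GU_\Q(V,\psi)$ (here a form of $\GL_2$ over $\Q$ twisted by $B$) with $I(\R)$ compact modulo center. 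The key input is that $\wt\bfM(k)$ is in bijection with a double coset space $I(\Q)\backslash I(\A_f)/U$, where $U=\prod_\ell U_\ell$ is the product of the stabilizers of the relevant local lattices / Dieudonné modules; at primes $\ell\nmid p$ with $\ell\mid\Delta$ these are hyperbolic lattice stabilizers, at $\ell\nmid\Delta$ they are hyperspecial, and at $p$ the uniqueness statement of Lemma~\ref{lm:DM(2,0)} shows the local factor at $p$ contributes a single point with the prescribed stabilizer.

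Next I would pass to $I^1$ via the exact sequence $1\to I^1\to I\xrightarrow{c}\Gm\to1$ and invoke Lemma~\ref{massI} (the similitude character maps $U$ onto $\wh\Z^\times$, which can be checked place-by-place using Lemmas~\ref{c_sym} and~\ref{c} together with surjectivity of the reduced norm) to reduce to $\Mass(I^1,U^1)$. Then I apply the mass formula of Gan--Hanke--Yu as in Proposition~\ref{mass} with $F=\Q$, $m=1$: the main term is $\tfrac{1}{2}\,|\zeta(-1)|=\tfrac{1}{2}\cdot\tfrac{1}{12}=\tfrac{1}{24}$, times a product of local correction factors $\lambda_v(U_v^1)$ over the bad places $v\mid\Delta$. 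For a prime $\ell\mid(\Delta/p)$ (so $\ell\neq p$, $B_\ell$ a division algebra, $m=1$, the local lattice being $(1)$, i.e. the $c=0$ case), formula~\eqref{eq:lambda} with $m=1$, $c=0$ gives $\lambda_\ell=\kappa_\ell(1,0)^{-1}(q_\ell^2-1)=(q_\ell-(-1))^{-1}(q_\ell^2-1)=q_\ell-1=\ell-1$. At the place $p$ itself the local factor is $1$: this is exactly the content of the uniqueness in Lemma~\ref{lm:DM(2,0)} — the local Dieudonné datum at $p$ has an essentially unique model, so its automorphism group contributes the ``expected'' volume and $\lambda_p=1$ (equivalently one checks the relevant local group at $p$ is, up to the twist, such that the Gan--Hanke--Yu factor is trivial, mirroring how the hyperspecial case gives $\lambda_v=1$ in Proposition~\ref{mass}). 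Combining, $\Mass(\wt\bfM(k))=\Mass(I^1,U^1)=\tfrac{1}{24}\cdot\prod_{\ell\mid(\Delta/p)}(\ell-1)$.

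There is however a factor-of-two discrepancy to resolve between $\tfrac{1}{24}$ and the claimed $\tfrac{1}{12}$, and sorting this out is the step I expect to be the main obstacle. The resolution should come from the distinction between $\bfM$ as a \emph{coarse} moduli scheme (no level structure is imposed here, unlike in Theorem~\ref{main.3} where $N\geq3$ rigidifies everything) and the stack-theoretic count: automorphisms include $\pm1$, and more precisely the mass appearing in the double-coset computation is a mass for $I^1$-lattices whose relation to $|\Aut(A,\lambda,\iota)|$ involves the kernel of $I^1(\Q)\to\mathrm{PGL}$, i.e. a factor of $2$ coming from $\{\pm1\}\subset\End^0_B(A)^\times$ acting trivially on $(A,\lambda,\iota)$ only when... — I would track carefully which group acts and with what stabilizers. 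Concretely: the count $\sum 1/|\Aut(A,\lambda,\iota)|$ should equal $2\cdot\Mass(I^1,U^1)$ because each $[g]\in I^1(\Q)\backslash I^1(\A_f)/U^1$ contributes $1/|\Gamma_g|$ to $\Mass$ but $\Gamma_g$ surjects onto $\Aut(A,\lambda,\iota)$ with kernel of order $2$ (the central $\pm1$ that fixes the polarized abelian surface but is a nontrivial element of $I^1(\Q)$). Hence $\Mass(\wt\bfM(k)) = 2\cdot\tfrac{1}{24}\prod_{\ell\mid(\Delta/p)}(\ell-1)=\tfrac{1}{12}\prod_{\ell\mid(\Delta/p)}(\ell-1)$, as asserted. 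I would verify this factor by comparing with the known $\Delta=p$ case (where the product is empty and the answer $\tfrac{1}{12}$ matches the classical Eichler/Deuring mass for the supersingular locus of the Shimura curve), which pins down the normalization unambiguously.
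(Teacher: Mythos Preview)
Your overall strategy---reduce to a mass for $I^1$ via Lemma~\ref{massI}, apply the Gan--Hanke--Yu formula with main term $\tfrac{1}{24}$, and compute $\lambda_\ell=\ell-1$ for $\ell\mid(\Delta/p)$---matches the paper's. Your justification that $\lambda_p=1$ is on the right track but incomplete: the paper pins this down via the lemma immediately following (Lemma~\ref{lm:AutM}), which identifies the stabilizer of the Dieudonn\'e datum with ${\rm SU}_2(\Z_p)$, a \emph{hyperspecial} subgroup, whence $\lambda_p=1$ by \cite[Proposition~4.7]{Gross2}. Uniqueness of the local Dieudonn\'e module alone (Lemma~\ref{lm:DM(2,0)}) does not give this.

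The real gap is your account of the factor of $2$. Your claim that $\Gamma_g\to\Aut(A,\lambda,\iota)$ has kernel $\{\pm1\}$ is false: the inversion $[-1]$ on $A$ is a genuine $O_B$-linear automorphism satisfying $(-1)^t\circ\lambda\circ(-1)=\lambda$, so $-1\in\Aut(A,\lambda,\iota)$, and in fact $\Gamma_g=\Aut(A_g,\lambda_g,\iota_g)$. The paper's source for the factor $2$ is completely different and structural: $\wt\bfM(k)$ contains objects of \emph{two} Lie types, $(2,0)$ and $(0,2)$. The mass computation via the double coset space $I(\Q)\backslash I(\A_f)/U$ parametrizes objects of a \emph{fixed} Lie type (the one determined by the basepoint $(A_0,\lambda_0,\iota_0)$), and by Lemma~\ref{lm:DM(2,0)} each type contributes the same mass $\tfrac{1}{24}\prod_{\ell\mid(\Delta/p)}(\ell-1)$, so one doubles. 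Your sanity check with $\Delta=p$ is also vacuous: an indefinite quaternion $\Q$-algebra is ramified at an even number of finite primes, so $\Delta$ can never be a single prime.
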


\begin{proof}
One can easily obtain the result by modifying the mass formula in \eqref{eq:mass_sp}. 
The local factor at $p$ satisfies that $\lambda_p'=1$ because the local compact subgroup $U_p$ at $p$ is hyperspecial by the lemma below. 
%Lemma~\ref{lm:AutM}. 
Also we need to multiple the mass by two  since there are two different Lie types associated to objects. 
\end{proof}
\begin{lemma}\label{lm:AutM}
    Let $(M,\<\, ,\>,\iota_p)$ be a principally polarized \dieu $O_B\otimes \Zp$-module of $W(k)$-rank four  with $(c_0,c_1)=(2,0)$ or $(0,2)$. 
    Then 
    \[ \Aut_{\rm DM}(M,\<\, ,\>,\iota_p)\simeq {\rm SU}_2(\Zp)\coloneqq \{A\in \GL_2(\Z_{p^2})\mid  \ol A^t A=\bbI_2, \det(A)=1\}. \]  
\end{lemma}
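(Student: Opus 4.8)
The plan is to compute the automorphism group of the principally polarized Dieudonné module $(M, \langle\,,\,\rangle, \iota_p)$ directly using the explicit normal form established in Lemma~\ref{lm:DM(2,0)}. By that lemma (and the index-shifted version for $(0,2)$), we may fix $W(k)$-bases $\{e_1, e_2\}$ of $M^0$ and $\{e_3, e_4\}$ of $M^1$ with respect to which $[\Pi]$, $[\sfF]$, and the pairing matrix are given by \eqref{eq:[Pi]}, \eqref{eq:[F]}, and \eqref{eq:pairing}. An automorphism is a $W(k)$-linear $g: M \to M$ commuting with $\iota_p$, with $\sfF$ and $\sfV$, and preserving $\langle\,,\,\rangle$; equivalently, since $\gamma$ is a unit in $O_B \otimes \Zp$ and $\langle x, y\rangle = (x, \gamma^{-1}y)$, it preserves the symmetric pairing $(\,,\,)$.

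First I would record what commuting with the $\Z_{p^2}$-action forces: $g$ must respect the decomposition $M = M^0 \oplus M^1$, so $[g] = \operatorname{diag}(A_0, A_1)$ with $A_j \in \GL_2(W(k))$. Commuting with $\sfF: M^0 \to M^1$, whose matrix is $\bbI_2$ in the chosen bases, together with $\sigma$-semilinearity of $\sfF$, forces $A_1 = \sigma(A_0) =: \sigma(A)$; here it is essential that $\sfF$ acts $\sigma$-linearly so that $g\sfF = \sfF g$ reads $\sigma(A_0) = A_1$ rather than $A_0 = A_1$. Next, commuting with $\Pi: M^0 \to M^1$ (matrix $\bbJ_1$ on $M^0$) and $\Pi: M^1 \to M^0$ (matrix $p\bbJ_1$): the first relation gives $\sigma(A)\bbJ_1 = \bbJ_1 A$... wait, $\Pi$ is $W(k)$-linear (not semilinear), so commuting with $\Pi$ gives $A_1 \bbJ_1 = \bbJ_1 A_0$, i.e. $\sigma(A)\bbJ_1 = \bbJ_1 A$. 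Finally, preserving the pairing $(\,,\,)$: with pairing matrix $\left(\begin{smallmatrix} 0 & \bbI_2 \\ \bbI_2 & 0\end{smallmatrix}\right)$, the condition $(gx, gy) = (x,y)$ becomes $A_0^t A_1 = \bbI_2$, i.e. $A^t \sigma(A) = \bbI_2$.

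Then I would descend to $\F_{p^2}$ and simplify. Since $g$ commutes with $\sfF$ and $\sfV$ it preserves $M^\diamond = \{m : \sfF^2 m = pm\}$, which is a $W(\F_{p^2})$-form of $M$; so $A \in \GL_2(W(\F_{p^2}))$ and the semilinearity $\sigma$ on $W(\F_{p^2})/W(\Fp)$ is the nontrivial Galois automorphism, which is exactly the bar involution $\bar{\cdot}$ of the unramified quadratic extension. Thus the conditions become: $\bar A^t A = \bbI_2$ (from the pairing, after transposing the relation $A^t \bar A = \bbI_2$), together with $\bar A \bbJ_1 = \bbJ_1 A$. A short linear-algebra computation shows that $\bar A \bbJ_1 = \bbJ_1 A$ for $A$ satisfying $\bar A^t A = \bbI_2$ is equivalent to $\det(A) = 1$: indeed for a unitary $A$ one has $\bbJ_1 A \bbJ_1^{-1} = \det(A) \cdot \bar A$ (this is the standard identity $\bbJ_1 A \bbJ_1^{-1} = (A^t)^{-1}\det A$ combined with unitarity $(A^t)^{-1} = \bar A / \ldots$), so the extra relation pins down $\det A = 1$. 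Hence $\Aut_{\mathrm{DM}}(M, \langle\,,\,\rangle, \iota_p) \simeq \{A \in \GL_2(\Z_{p^2}) : \bar A^t A = \bbI_2, \det(A) = 1\} = \mathrm{SU}_2(\Zp)$, identifying $\Z_{p^2} = W(\F_{p^2})$.

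The main obstacle I anticipate is bookkeeping the semilinearity correctly: $\sfF$ and $\sfV$ are $\sigma^{\pm 1}$-semilinear while $\Pi$ and the pairing are $W(k)$-bilinear, so the commutation relations mix a Galois twist into some equations and not others, and getting every twist in the right place (so that the final group really is the \emph{unitary} group $\bar A^t A = \bbI_2$ and not, say, an orthogonal one) requires care. A secondary point is verifying the equivalence between the pair of relations $\{\bar A^t A = \bbI_2,\ \bar A \bbJ_1 = \bbJ_1 A\}$ and $\{\bar A^t A = \bbI_2,\ \det A = 1\}$; this is the identity $\bbJ_1 A \bbJ_1^{-1} = \det(A)\,(A^t)^{-1}$ specialized to unitary $A$, which is routine but must be checked. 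Everything else—reducing to $M^\diamond$, diagonalizing $g$ under the $O_B$-action—is formal and parallels the uniqueness argument already carried out in the proof of Lemma~\ref{lm:DM(2,0)}.
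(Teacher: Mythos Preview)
Your proposal is correct and follows essentially the same route as the paper: pass to the normal form of Lemma~\ref{lm:DM(2,0)}, write an automorphism in block form $\diag(A,\ol A)$ using the $\Z_{p^2}$-grading and $\sfF$-commutation (together with descent to $M^\diamond$ to land in $\GL_2(\Z_{p^2})$), then read off $\ol A^t A=\bbI_2$ from the pairing and $\det A=1$ from the $\Pi$-relation via the identity $\bbJ_1 A\bbJ_1^{-1}=\det(A)(A^t)^{-1}$. The paper's proof is the same argument stated more tersely.
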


\begin{proof}
    We may show the case where $(c_0,c_1)=(2,0)$ and the proof for $(c_0,c_1)=(0,2)$ is the same. Choose a $W(k)$-basis $\scrB=\{e_1,\dots, e_4\}$ as in Lemma~\ref{lm:DM(2,0)}. With respect to $\scrB$, an element $Q\in \Aut_{\rm DM}(M,\<\, ,\>,\iota_p)$ is represented by a matrix
    \[ \begin{pmatrix}
        A & 0 \\
        0 & \ol A
    \end{pmatrix}, \quad A\in \GL_2(\Z_{p^2}).\]
We check that $Q\circ \sfF=\sfF \circ Q$. The conditions $(Qx,Qy)=(x,y)$ and $Q\circ \Pi=\Pi \circ Q$ give rise to the conditions $\ol A^t A=\bbI_2$ and $\det A=1$, respectively. 
This proves the lemma. 
\end{proof}
We return to the standard setting for Shimura curves and  assume condition~\eqref{special}. 
Let $\mathscr D=(B,*,O_B,V,\psi,\Lambda,h_0)$ be a principal integral PEL datum of rank one. 
We further 
assume $p\mid \Delta$. 
Let $\bfG$, $N\ge 3$, and $\bfM_{\sf K}$ be defined as in Sections~\ref{ss:PEL} and~\ref{ss:Sh}.
%(\red{refer to a more precise place}). 
Then the geometric special fiber $\calM_{\K}=\bfM_{\K} \otimes k$  is equal to its supersingular locus $\calM_{\sf K}^{\rm ss}$, and  the singular locus of $\calM_{\sf K}$ is exactly the superspecial locus. Moreover, at each superpecial point there are exactly two components passing through and intersecting transversally. Namely, $\calM_{\sf K}$ has ordinary singularities exactly at superspecial points.

Let $\Sigma_{{\sf K}^p}$ be the set of isomorphism classes of polarized superspecial $O_B$-abelian surfaces $(A_0,\lambda_0,\iota_0,\eta_0)$ over $\Fpbar$ with level $N$-structure and with Lie type $(2,0)$ or $(0,2)$ such that $\ker \lambda_0\simeq \alpha_p\times \alpha_p$. 

\begin{prop}\label{prop:component}
    There is a natural bijection between $\Sigma_{{\sf K}^p}$ and the set $\Irr(\calM_{\sf K})$ of irreducible components of $\calM_{\sf K}$.
\end{prop}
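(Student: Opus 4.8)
The plan is to realize $\Irr(\calM_{\sf K})$, the set of irreducible components of the special fiber of the Shimura curve, via the Cherednik--Drinfeld $p$-adic uniformization, and then to identify the resulting combinatorial data with the superspecial set $\Sigma_{{\sf K}^p}$. First I would recall that under condition \eqref{special} with $p\mid\Delta$ we have $\ord_{\Pi_p}(\gamma)=1$, so $\bfM^{\rm unp}\simeq\bfM$ and the theorem of Drinfeld \cite[Section 4]{Drinfeld} (see also \cite[III]{BC}) gives a $p$-adic uniformization of the formal completion of $\bfM\otimes W(\Fpbar)$ along its special fiber by $\wh\Omega^{\rm nr}=\wh\Omega\,\hat\otimes\,W(\Fpbar)$, where $\wh\Omega$ is Deligne's formal model of the Drinfeld upper half plane. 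Adding the prime-to-$p$ level-$N$ structure, $\bfM_{\sf K}$ acquires the uniformization $\bfM_{\sf K}\otimes W(\Fpbar)\simeq I'(\Q)\backslash\bigl(\wh\Omega^{\rm nr}\times \bfG(\A_f^p)/{\sf K}^p(N)\bigr)$, where $I'$ is the relevant inner form of $\bfG$ that is anisotropic modulo center at $\infty$ and split at $p$ (the group $B'^\times$-flavored object coming from the quaternion algebra obtained by switching invariants at $p$ and $\infty$). Consequently $\calM_{\sf K}=\bfM_{\sf K}\otimes\Fpbar$ is, as a curve, a union of projective lines glued at points, and its dual graph is the quotient by $I'(\Q)$ of (the building $\times\,\bfG(\A_f^p)/{\sf K}^p(N)$), the building being the Bruhat--Tits tree of $\PGL_2(\Q_p)$.

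Next I would extract the combinatorics. The special fiber of $\wh\Omega$ is an infinite union of $\mathbb P^1$'s indexed by the vertices of the tree, meeting transversally at points indexed by the edges; this is exactly the ``ordinary singularities at superspecial points'' picture recalled just before the proposition. Passing to the quotient, $\Irr(\calM_{\sf K})$ is in bijection with $I'(\Q)\backslash\bigl(\{\text{vertices of the tree}\}\times\bfG(\A_f^p)/{\sf K}^p(N)\bigr)$. Since $\PGL_2(\Q_p)$ acts transitively on the two $\PGL_2(\Q_p)$-orbits of vertices and the stabilizers of the two types of vertices are the two maximal parahorics of $\PGL_2(\Q_p)$ (equivalently of $\bfG(\Q_p)$ up to similitude), the set of vertices of a fixed type is $\bfG(\Q_p)/{\sf K}_p$ for the corresponding hyperspecial ${\sf K}_p$, and the two types correspond precisely to the two Lie types $(2,0)$ and $(0,2)$. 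Therefore $\Irr(\calM_{\sf K})$ is identified with $I'(\Q)\backslash \bigl(\bfG(\Q_p)/{\sf K}_p\sqcup\bfG(\Q_p)/{\sf K}'_p\bigr)\times\bfG(\A_f^p)/{\sf K}^p(N)$, i.e.\ with a disjoint union over the two Lie types of the adelic double coset spaces $I'(\Q)\backslash I'(\A_f)/U$ for the appropriate open compact $U$ (hyperspecial at $p$, level $N$ away from $p$).

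Finally I would match this with $\Sigma_{{\sf K}^p}$. By Proposition~\ref{prop:exist_av} and Lemma~\ref{lm:DM(2,0)} (and the $(0,2)$ analogue), every superspecial $O_B$-abelian surface of a given Lie type has, over $\Fpbar$, a unique principally polarized \dieu $O_B\otimes\Zp$-module with $\ker\lambda_0\simeq\alpha_p\times\alpha_p$, and the rigidified moduli of such objects with prime-to-$p$ level structure is a torsor under the same adelic coset space: the prime-to-$p$ part is classified by $\bfG(\A_f^p)/{\sf K}^p(N)$, the group of self-isogenies is $I'(\Q)$ by Tate's theorem, and for each fixed Lie type the $p$-part is rigid (a single isomorphism class of polarized $p$-divisible group with $O_B$-action, with automorphism group ${\rm SU}_2(\Zp)={\sf K}_p^1$ by Lemma~\ref{lm:AutM}). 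Hence $\Sigma_{{\sf K}^p}\cong I'(\Q)\backslash\bigl(\bfG(\A_f^p)/{\sf K}^p(N)\bigr)\times\{\text{two Lie types}\}$, the same set as $\Irr(\calM_{\sf K})$. I would make the bijection canonical by sending a component $Z$ to the superspecial abelian surface attached to any superspecial (= singular) point of $\calM_{\sf K}$ lying on $Z$: concretely, $Z$ is a $\mathbb P^1$ whose generic point lies in the supersingular locus, and the \dieu module of a point of $Z$ degenerates on the nose to one of the two rigid superspecial types, giving a well-defined element of $\Sigma_{{\sf K}^p}$; conversely a point of $\Sigma_{{\sf K}^p}$ determines a vertex of the tree up to $I'(\Q)$, hence a component. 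The main obstacle I anticipate is bookkeeping rather than conceptual: one must check that the inner form $I'$ and the level subgroups appearing on the uniformization side are literally the same as the automorphism group and level data appearing on the $\Sigma_{{\sf K}^p}$ side (in particular that ${\sf K}_p$ is hyperspecial, already noted in the proof of Proposition~\ref{prop:mass}), and that the ``two types of vertices $\leftrightarrow$ two Lie types'' dictionary is the correct one and compatible with polarizations; these are exactly the compatibilities recorded in \cite{BC, Drinfeld} and in the \dieu-module computations of Lemmas~\ref{lm:DM(2,0)} and~\ref{lm:AutM}.
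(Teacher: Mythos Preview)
Your approach via Cherednik--Drinfeld uniformization is genuinely different from the paper's and is valid in outline: the paper instead constructs the bijection directly by moduli, sending $\ul A_0\in\Sigma_{{\sf K}^p}$ to the Moret--Bailly family $\calX_{\ul A_0}$ of degree-$p$ $O_B$-linear isogenies $\rho:\ul A_0\to\ul A$ (which maps isomorphically onto a component), and conversely sending a component $X$ to the minimal isogeny $A_0\to A$ of a non-singular point $\ul A\in X$, then computing via \dieu theory that the resulting $(A_0,\lambda_0,\iota_0,\eta_0)$ lands in $\Sigma_{{\sf K}^p}$. Your route recovers the same double-coset description and ties in with the Ogg--Carayol picture the paper cites in the Remark; the paper's route is more self-contained and makes the ``natural'' in the statement explicit without invoking uniformization.

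Two points need correcting. First, the objects of $\Sigma_{{\sf K}^p}$ are \emph{not} principally polarized: they satisfy $\ker\lambda_0\simeq\alpha_p\times\alpha_p$, i.e.\ $\sfV M_0^{\vee,\<\,,\,\>_0}=M_0$. The rigidity and automorphism statements you need at $p$ are therefore those of Lemma~\ref{lm:M0}, not Lemma~\ref{lm:DM(2,0)}/\ref{lm:AutM} (which concern the principally polarized case). Second, your proposed canonical map ``send $Z$ to the superspecial abelian surface attached to any superspecial point lying on $Z$'' does not work as stated: the superspecial (singular) points of $\calM_{\sf K}$ are principally polarized with Lie type $(1,1)$, hence do not lie in $\Sigma_{{\sf K}^p}$, and each such point lies on two components. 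The correct moduli-theoretic map (and the one that makes the bijection natural) goes through the minimal isogeny from a \emph{non-singular} point of $Z$, producing an auxiliary $A_0$ with $\ker\lambda_0\simeq\alpha_p^2$ and Lie type $(2,0)$ or $(0,2)$; on the uniformization side this corresponds to the vertex lattice indexing the component, not to an intersection point.
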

\begin{proof}
The statement without $O_B$-action has been established by Katsura and Oort using the Moret-Bailly family~\cite[Section 2]{katsura-oort:surface}. We sketch the proof using the geometry of $\calM_{\sf K}$ aforementioned. For each member $\ul A_0=(A_0,\lambda_0,\iota_0,\eta_0)$ in $\Sigma_{{\sf K}^p}$, we construct a family $\calX_{\ul A_0}$ of $O_B$-linear isogenies $\rho: \ul A_0 \to \ul A=(A,\lambda, \iota,\eta)$ of degree $p$. 
The map $\rho\mapsto \ul A$ induces an isomorphism from $\calX_{\ul A_0}$ onto an irreducible component $X$ of $\calM_{\sf K}$. Conversely, for each $X\in \Irr(\calM_{\sf K})$, choose a non-singular point $\ul A$ of $X$, so $a(A)=1$.
Let $\rho:A_0\to A$ be the minimal isogeny of $A$~\cite[1.8]{LO}. One takes the pull-back polarization  $\lambda_0=\rho^* \lambda$ and level $N$-structure $\eta_0=\rho^* \eta$. 
Then $\ker \lambda_0\simeq \alpha_p\times \alpha_p$. 
The $O_B$-action $\iota$ on $A$ can be lifted uniquely to an  $O_B$-action $\iota_0$ on $A_0$~\cite[Proposition 4.8]{yu:endo}. 
This gives rise to an object $\ul A_0=(A_0,\lambda_0,\iota_0,\eta_0)$, and the point $\ul A_0$ depends only on $X$ as is done in \cite[Theorem 2.1]{katsura-oort:surface}. 
Now we show that $\ul A_0$ has Lie type $(2,0)$ or $(0,2)$. 
Let $M_0$ and $M$ be the \dieu modules of $\ul A_0$ and $\ul A$, respectively. 
We have 
\begin{equation}\label{eq:A.11}
\text{$M_0=(\sfF,\sfV)M$, \quad $M_0\subsetneq M \subsetneq \sfV^{-1}M_0$ \ and \  $M_0^j\subseteq M^j\subseteq (\sfV^{-1}M_0)^j$}     
\end{equation}
for all $j\in \Z/2\Z$. Since $\ker \lambda_0\simeq \alpha_p\times \alpha_p$, we also have $\sfV^{-1}M_0=M_0^{\vee,\<\,,\,\>_0}$ where $M_0^{\vee,\<\,,\,\>_0}$ denotes the dual $W(k)$-lattice of $M_0$ with respect to the polarization $\<\,,\>_0$. 
Since $a(M)=1$, we have $M_0^{j'} \subsetneq M^{j'}$ and $M_0^{{j'}+1} =M^{{j'}+1}$ for some ${j'}\in \Z/2\Z$.
Since $M$ has Lie type $(1,1)$, we have $(VM)^{{j'}+1}\subsetneq M^{{j'}+1}=M_0^{{j'}+1}$. This and Equation ~\eqref{eq:A.11} give
\[ M_0^{j'} \subsetneq M^{j'}\subsetneq (\sfV^{-1} M_0)^{j'}, \quad  \dim_k (M_0/\sfV M_0)^{{j'}+1}=2, \quad \text{and} \quad (c_{{j'}},c_{{j'}+1})=(0,2). \]
Therefore, $\ul A_0$ has Lie type $(2,0)$ or $(0,2)$ and it is a member of $\Sigma_{{\sf K}^p}$.
This gives the desired correspondence. 
\end{proof}

\begin{remark}
Irreducible components of $\calM_\K$  can be classified  into two types via the Lie types of superspecial abelian surfaces, using  Proposition~\ref{prop:component}.
This classification corresponds to the one given by two types of vertices in the Bruhat-Tits tree of $\SL_2(\Qp)$ \cite{Ogg, Carayol}.
    Alternatively, we may associate to each irreducible component the $a$-type of any of its non-singular points, which is either $(1,0)$ or $(0,1)$, as shown in the proof of Lemma~\ref{prop:component}. From this, one sees that the intersection of two irreducible components of different types, if non-empty, has points of $a$-type $(1,1)$, which are superspecial. Conversely, every superpsecial point, which has $a$-type $(1,1)$, lies in one irreducible component with $a$-type $(1,0)$ and the other component with $a$-type $(0,1)$. 
\end{remark}
\begin{lemma}
    \label{lm:M0}
    Assume $p \mid \Delta$ and that $*$ satisfies condition~\eqref{special}. 
    For $(c_0,c_1)=(2,0)$ or $(c_0,c_1)=(0,2)$, there is one isomorphism class of polarized \dieu $O_B\otimes \Zp$-modules $(M_0,\<\,,\>_0,\iota_p)$ of $W(k)$-rank four  with Lie type $(c_0,c_1)$ such that $\sfV M_0^{\vee,\<\,,\,\>_0}=M_0$. Moreover, we have
    \begin{equation}
       \Aut_{\rm DM} (M_0, \<\,,\>_0,\iota_p)\simeq {\rm SU}_2(\Zp). 
    \end{equation}
    % \[ \Aut_{\rm DM} (M_0, \<\,,\>_0,\iota_p)\simeq {\rm SU}_2(\Zp). \]
\end{lemma}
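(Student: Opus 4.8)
The plan is to prove the lemma by the same two-step scheme as Lemmas~\ref{lm:DM(2,0)} and~\ref{lm:AutM} --- an explicit model together with a canonical-basis argument for existence and uniqueness, then a direct matrix computation of the automorphism group --- adapted to the two features that distinguish the present case: now $*$ satisfies \eqref{special}, so that $\ord_{\Pi_p}(\gamma)=1$ and $\gamma$ \emph{exchanges} the two components $M_0^0$ and $M_0^1$ instead of preserving each, and $M_0$ is no longer self-dual but satisfies the minimal-isogeny condition $\sfV M_0^{\vee,\langle\,,\,\rangle_0}=M_0$ (i.e.\ $\ker\lambda_0\simeq\alpha_p\times\alpha_p$). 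It suffices to treat $(c_0,c_1)=(2,0)$, the case $(0,2)$ being obtained by the index shift $j\mapsto j+1$. First I would record the structural consequences of Lie type $(2,0)$, exactly as in the paragraph preceding Lemma~\ref{lm:DM(2,0)}: one gets $\sfV M_0^1=pM_0^0$, $\sfV M_0^0=M_0^1$, $\sfF M_0^0=M_0^1$, $\sfF M_0^1=pM_0^0$, $\Pi M_0^0=M_0^1$, $\Pi M_0^1=pM_0^0$ and $\sfF^2M_0=pM_0$; in particular $M_0$ is superspecial, and, setting $M_0^\diamond\coloneqq\{m\in M_0\mid \sfF^2m=pm\}$, one gets a polarized Dieudonn\'e $O_B\otimes\Zp$-module over $\F_{p^2}$ with $M_0^\diamond\otimes_{W(\F_{p^2})}W(k)=M_0$ on which $\sfF=\sfV$ and $\sfF^2=p$. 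I would also unwind $\sfV M_0^{\vee}=M_0$ componentwise, to $(M_0^{\vee})^0=M_0^0$ and $(M_0^{\vee})^1=p^{-1}M_0^1$ (up to a shift of indices); this records exactly how $M_0$ deviates from self-duality and which component of $\langle\,,\,\rangle_0$ carries an extra factor of $p$.

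\emph{Existence.} I would obtain a model of $(M_0,\langle\,,\,\rangle_0,\iota_p)$ from Proposition~\ref{prop:component}: the moduli scheme $\calM_{\sf K}$ is non-empty (Cherednik--Drinfeld), so pick a non-singular point, with Dieudonn\'e module $M$ of Lie type $(1,1)$ and $a(M)=1$; the minimal isogeny $A_0\to A$ produces $M_0\coloneqq(\sfF,\sfV)M$, of Lie type $(2,0)$ or $(0,2)$, carrying the pulled-back polarization $\langle\,,\,\rangle_0$ --- for which $\ker\lambda_0\simeq\alpha_p\times\alpha_p$, hence $\sfV M_0^{\vee}=M_0$, as in the proof of Proposition~\ref{prop:component} --- and the canonical lift $\iota_p$ of the $O_B$-action, just as in the proof of Theorem~\ref{exist}. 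Alternatively I would write down an explicit model on $M_0=M_0^0\oplus M_0^1$ with $\Pi$, $\sfF$ of the shapes \eqref{eq:[Pi]}, \eqref{eq:[F]} and an alternating form $\langle\,,\,\rangle_0$ whose block structure and scaling are as determined above, then check $\langle\sfF x,y\rangle_0=\langle x,\sfV y\rangle_0^\sigma$, $\langle bx,y\rangle_0=\langle x,b^*y\rangle_0$, and $\sfV M_0^{\vee}=M_0$ by direct computation.

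\emph{Uniqueness and automorphisms.} Given an arbitrary $(M_0,\langle\,,\,\rangle_0,\iota_p)$ of the stated type, I would argue on $M_0^\diamond$ over $W(\F_{p^2})$, which is unramified over $W(\F_p)$. Using the componentwise description of $\sfV M_0^{\vee}=M_0$, one of the components $M_0^{\diamond,\bullet}$ carries a distinguished unimodular form (symplectic or Hermitian according to the relevant twist by $\gamma$ and $\sfF$); pick a normalized basis $\{e_1,e_2\}$ for it and set $e_3\coloneqq\sfF e_1$, $e_4\coloneqq\sfF e_2$. Then $\sfF^2=p$ forces $[\sfF]$ into the shape \eqref{eq:[F]}, the relation $\langle\sfF x,y\rangle_0=\langle x,\sfV y\rangle_0^\sigma$ determines $\langle\,,\,\rangle_0$ on the remaining component, and it remains to pin down $[\Pi]=\left(\begin{smallmatrix}0&pB\\ C&0\end{smallmatrix}\right)$: the relation $\sfF\Pi=\Pi\sfF$ gives $B=\ol C$, while $\Pi^2=-p$ together with the compatibility $\langle\Pi x,\Pi y\rangle_0=\langle x,\Pi^{*}\Pi y\rangle_0$ (with $\Pi^{*}=\gamma\ol\Pi_p\gamma^{-1}$ computed from \eqref{eq:Pi} and \eqref{special}) constrains $C$ up to a change of basis of the form $\left(\begin{smallmatrix}A&0\\ 0&\ol A\end{smallmatrix}\right)$ with $\ol A^tA=\bbI_2$, which one uses to normalize $[\Pi]$ into the shape \eqref{eq:[Pi]}, exactly as at the end of the proof of Lemma~\ref{lm:DM(2,0)}. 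This gives uniqueness. With the canonical basis $\{e_1,\dots,e_4\}$ in hand, the computation of $\Aut_{\rm DM}(M_0,\langle\,,\,\rangle_0,\iota_p)$ is then verbatim the proof of Lemma~\ref{lm:AutM}: an automorphism is $\left(\begin{smallmatrix}A&0\\ 0&\ol A\end{smallmatrix}\right)$ with $A\in\GL_2(\Z_{p^2})$, commutation with $\sfF$ is automatic, commutation with $\Pi$ gives $\det A=1$, and preservation of the polarization gives $\ol A^tA=\bbI_2$, whence the group is ${\rm SU}_2(\Zp)$.

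The step I expect to be the main obstacle is the bookkeeping in the uniqueness argument: because \eqref{special} makes $\gamma$ exchange $M_0^0$ and $M_0^1$ and because $M_0$ is $\sfV$-dual rather than self-dual, the bilinear form one diagonalizes on $M_0^\diamond$, its scaling on the two components, and the precise shape of the compatibility constraint on $[\Pi]$ all differ from the self-dual situation of Lemma~\ref{lm:DM(2,0)}; one must verify that, after these changes, the constraints $\Pi^2=-p$, $\sfF\Pi=\Pi\sfF$ and the polarization identity still determine $[\Pi]$ up to the unitary change of basis $\ol A^tA=\bbI_2$. Once this is settled, the remaining steps --- including the computation of $\Aut_{\rm DM}$ --- carry over unchanged from Lemmas~\ref{lm:DM(2,0)} and~\ref{lm:AutM}.
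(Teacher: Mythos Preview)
Your approach --- redoing the explicit-model and canonical-basis argument from scratch, adapted to $\ord_{\Pi_p}(\gamma)=1$ and to the $\sfV$-dual condition --- would work, but the paper proceeds by a much shorter reduction to Lemmas~\ref{lm:DM(2,0)} and~\ref{lm:AutM} themselves. The key observation is that, under~\eqref{special}, the auxiliary pairing $(x,y)_0\coloneqq\langle x,\,p^{-1}\gamma y\rangle_0$ is \emph{perfect and symmetric} on $M_0$: since $\ord_{\Pi_p}(\gamma)=1$ one has $\gamma^{-1}N=\Pi^{-1}N$ on lattices, and the Lie-type-$(2,0)$ relations give $\Pi M_0=\sfV M_0$, so
\[
M_0^{\vee,(\,,\,)_0}=p\,\gamma^{-1}M_0^{\vee,\langle\,,\,\rangle_0}=p\,\Pi^{-1}\sfV^{-1}M_0=M_0.
\]
One then checks that $(M_0,(\,,\,)_0,\iota_p)$ satisfies the three structural conditions (i)--(iii) preceding Lemma~\ref{lm:DM(2,0)}, so existence, uniqueness, and the computation $\Aut_{\rm DM}\simeq{\rm SU}_2(\Zp)$ follow immediately from Lemmas~\ref{lm:DM(2,0)} and~\ref{lm:AutM} with no further work. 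In other words, the ``bookkeeping obstacle'' you anticipate --- tracking how the changed $\gamma$ and the non-self-duality alter the normalization of the form and the constraints on $[\Pi]$ --- is absorbed entirely into the single twist by $p^{-1}\gamma$, after which everything is literally the self-dual case already handled. Your route would reproduce the same conclusion, but at the cost of repeating the entire canonical-basis argument; the paper's twist buys a proof of a few lines.
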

\begin{proof}
    Suppose that $(c_0,c_1)=(2,0).$ In this case, equalities  \eqref{eq:VMj} and \eqref{eq:PiMj} imply $\Pi M_0=\sfV M_0$. 
    Let $(\, , \,)_0 :M_0\times M_0 \to W(k)[1/p]$ be a pairing given by  $(x,y)_0\coloneqq \<x, p^{-1} \gamma y\>$.
   Then 
    \[ \begin{split}
        M_0^{\vee,(\,,\,)_0}&\coloneqq\{x\in M_0[1/p] \mid (x,M)_0\subset W(k)\} \\ 
          &=\{x\in M_0[1/p] \mid \<x,p^{-1} \gamma M\>_0\subset W(k)\}\\
          &=p \cdot \gamma^{-1} M_0^{\vee,\<\,,\,\>_0}=p\cdot \Pi^{-1} \sfV^{-1} M_0=M_0.  
    \end{split}
     \]
 Hence  $(M_0=M_0^0\oplus M_0^1,(\,,)_0,\iota_p)$ satisfies the properties (i), (ii) and (iii) above. 
    By Lemmas~\ref{lm:DM(2,0)} and~\ref{lm:AutM},  there is one isomorphism class of such modules and hence one isomorphism class of the polarized \dieu $O_B\otimes \Zp$-modules  $(M_0,\<\,,\>_0,\iota_p)$. We also obtain $\Aut_{\rm DM} (M_0, \<\,,\>_0,\iota_p)\simeq {\rm SU}_2(\Zp)$ from Lemma~\ref{lm:AutM}.
The proof of the case $(c_0, c_1)=(0,2)$ is similar. 
\end{proof}

Using Propositions~\ref{prop:mass} and \ref{prop:component}, and Lemma ~\ref{lm:M0}, we get the following result.

\begin{prop}\label{prop:IrrMK}
   The moduli space $\calM_{\K}$ has 
   \[ \lvert\bfG(\Z/N\Z)\lvert \cdot \frac{1}{12} \cdot \prod_{\ell|(\Delta/p)} (\ell-1)\]
   irreducible components. 
\end{prop}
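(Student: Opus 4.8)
The plan is to assemble Proposition~\ref{prop:IrrMK} from the three ingredients already in place: the mass formula of Proposition~\ref{prop:mass}, the bijection of Proposition~\ref{prop:component} between $\Sigma_{{\sf K}^p}$ and $\Irr(\calM_{\sf K})$, and the automorphism computation of Lemma~\ref{lm:M0}. The essential point is to convert the mass $\Mass(\wt\bfM(k))$, which is a weighted count over principally polarized $O_B$-abelian surfaces \emph{without} level structure and with $\ker\lambda_0$ trivial, into an honest cardinality count of $\Sigma_{{\sf K}^p}$, the set of superspecial objects with $\ker\lambda_0\simeq\alpha_p\times\alpha_p$ and with level-$N$ structure.

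First I would recall that by Proposition~\ref{prop:component} the number of irreducible components of $\calM_{\sf K}$ equals $\lvert\Sigma_{{\sf K}^p}\rvert$. Next I would relate $\Sigma_{{\sf K}^p}$ to the unpolarized (or rather, $\ker\lambda\simeq\alpha_p\times\alpha_p$) analogue of $\wt\bfM$. The minimal-isogeny construction used in the proof of Proposition~\ref{prop:component} already shows that each $\ul A_0\in\Sigma_{{\sf K}^p}$ arises from a component, i.e.\ from a point of $\wt\bfM$; concretely, an object of $\Sigma_{{\sf K}^p}$ is a principally polarized superspecial $O_B$-abelian surface $(A_0,\lambda_0,\iota_0)$ with $\ker\lambda_0\simeq\alpha_p\times\alpha_p$ together with a level-$N$ structure. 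Since $N\geq 3$, the level structure rigidifies: the forgetful map from $\Sigma_{{\sf K}^p}$ to the set of isomorphism classes of triples $(A_0,\lambda_0,\iota_0)$ has fibers that are torsors under $\bfG(\Z/N\Z)/(\text{image of the automorphism group})$, but because $N\geq 3$ no nontrivial automorphism of $(A_0,\lambda_0,\iota_0)$ fixes a level structure (cf.\ the argument \cite[Lemma, p.~207]{mumford:av} already invoked in the main text). Hence
\[
\lvert\Sigma_{{\sf K}^p}\rvert=\lvert\bfG(\Z/N\Z)\rvert\cdot\sum_{[(A_0,\lambda_0,\iota_0)]}\frac{1}{\lvert\Aut(A_0,\lambda_0,\iota_0)\rvert},
\]
the sum running over isomorphism classes of principally polarized superspecial $O_B$-abelian surfaces with $\ker\lambda_0\simeq\alpha_p\times\alpha_p$ and Lie type $(2,0)$ or $(0,2)$.

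Then I would identify the latter mass with $\Mass(\wt\bfM(k))$. By Lemma~\ref{lm:M0}, for each of the two Lie types there is a single isomorphism class of polarized \dieu $O_B\otimes\Zp$-module with $\sfV M_0^{\vee}=M_0$, and its automorphism group is ${\rm SU}_2(\Zp)$; meanwhile Lemma~\ref{lm:AutM} gives the same answer for the Lie types appearing in $\wt\bfM$. More precisely, the objects parametrized by $\wt\bfM$ are \emph{principally} polarized (so $\ker\lambda=0$), whereas those in $\Sigma_{{\sf K}^p}$ have $\ker\lambda_0\simeq\alpha_p\times\alpha_p$; but the minimal-isogeny correspondence $\ul A\leftrightarrow\ul A_0$ of Proposition~\ref{prop:component} is a bijection between the two sets of objects that multiplies neither side's automorphism group (the automorphism group of the minimal $\ul A_0$ agrees with that of $\ul A$, both being ${\rm SU}_2(\Zp)$-controlled at $p$ and determined by the away-from-$p$ data otherwise). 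Thus the polarized mass counting over $\Sigma$-type objects and that counting over $\wt\bfM$-type objects coincide, and by Proposition~\ref{prop:mass} this common value is $\tfrac{1}{12}\prod_{\ell\mid(\Delta/p)}(\ell-1)$. Combining this with the displayed formula for $\lvert\Sigma_{{\sf K}^p}\rvert$ and Proposition~\ref{prop:component} yields the claim.

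The main obstacle I anticipate is bookkeeping the precise relationship between the \emph{unpolarized/quasi-principal} mass $\Mass(\wt\bfM(k))$ as packaged in Proposition~\ref{prop:mass} and the genuinely enumerative quantity $\lvert\Sigma_{{\sf K}^p}\rvert$: one must be careful that (i) the factor of two for the two Lie types is already built into $\Mass(\wt\bfM(k))$ and is \emph{not} double-counted when one passes to $\Sigma_{{\sf K}^p}$ via the minimal isogeny (since each component has a well-defined type, the two types partition $\Irr(\calM_{\sf K})$ and the factor $2$ is simply absorbed), and (ii) the rigidification by the level-$N$ structure produces exactly the factor $\lvert\bfG(\Z/N\Z)\rvert$ with no spurious quotient, which is where the hypothesis $N\geq 3$ is used. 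Once these two points are handled cleanly the proof is a one-line combination of the cited results.
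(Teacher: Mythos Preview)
Your approach is the paper's approach: combine Proposition~\ref{prop:component}, the level-$N$ rigidification, and the mass computation of Proposition~\ref{prop:mass}, with Lemma~\ref{lm:M0} supplying the local input at $p$. The architecture and the final formula are correct.

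There is, however, a genuine confusion in your step~3. Proposition~\ref{prop:mass} is stated and proved in the setting $S=\{p\}$, where the positive involution $*$ is \emph{different} from the one assumed in Proposition~\ref{prop:IrrMK} (there condition~\eqref{special} holds, so $S=\emptyset$). The moduli space $\wt\bfM$ in Proposition~\ref{prop:mass} thus parametrizes objects for a different triple $(B,*,O_B)$, and there is no ``minimal-isogeny bijection between $\wt\bfM$-objects and $\Sigma$-objects''. The correspondence in Proposition~\ref{prop:component} goes between $\Sigma_{{\sf K}^p}$ and $\Irr(\calM_{\sf K})$; the $\ul A$ appearing there is a nonsingular point of a component of $\calM_{\sf K}$, not a point of $\wt\bfM(k)$. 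What you should say instead is that the \emph{argument} of Proposition~\ref{prop:mass} (the adelic mass computation underlying \eqref{eq:mass_sp}) transports to the mass of $\Sigma$-type objects in the present setting: away from $p$ the local stabilizers are the same, the factor of two for the two Lie types enters exactly as before, and at $p$ Lemma~\ref{lm:M0} replaces Lemma~\ref{lm:AutM} to show that the local stabilizer is the hyperspecial $\SU_2(\Zp)$, hence $\lambda'_p=1$. Once you phrase the identification of masses this way, rather than as a bijection of objects, the proof is complete and agrees with the paper's.
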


\end{document}